\theoremstyle{plain}
\newtheorem{theorem}{Theorem}
  \newtheorem{lemma}[theorem]{Lemma}
    \newtheorem{proposition}[theorem]{Proposition}
  \theoremstyle{remark}
  \newtheorem{remark}[theorem]{Remark}
  \theoremstyle{definition}
    \newtheorem{definition}[theorem]{Definition}
        \newtheorem{example}[theorem]{Example}
\begin{document}
\title[Recent Progress in Shearlet Theory]{Recent Progress in Shearlet Theory: Systematic Construction of Shearlet Dilation Groups, Characterization of Wavefront Sets, and New Embeddings}

\author[G.\ S. Alberti]{Giovanni S. Alberti}
\address{Department of Mathematics,
ETH Z\"{u}rich, R\"{a}mistrasse 101, 8092 Z\"{u}rich, Switzerland}
\email{giovanni.alberti@sam.math.ethz.ch}


\author[S. Dahlke]{Stephan Dahlke}
\address{FB12 Mathematik und Informatik, Philipps-Universit\"at Marburg, Hans-Meerwein-Stra{\ss}e, Lahnberge, 35032 Marburg, Germany}
\email{dahlke@mathematik.uni-marburg.de}


\author[F. De Mari]{Filippo De Mari}
\address{Dipartimento di Matematica, Universit\`{a} di Genova, Via Dodecaneso 35, Genova, Italy}
\email{demari@dima.unige.it}

\author[E. De Vito]{Ernesto De Vito}
\address{Dipartimento di Matematica, Universit\`{a} di Genova, Via Dodecaneso 35, Genova, Italy}
\email{devito@dima.unige.it}

 
 \author[H. F\"{u}hr]{Hartmut F\"{u}hr}
 \address{Lehrstuhl A f\"{u}r Mathematik, RWTH Aachen University, 52056 Aachen, Germany}\email{fuehr@matha.rwth-aachen.de}
 
 \date{May 9, 2016}
 
 \subjclass[2010]{42C15, 42C40, 46F12, 22D10}

\maketitle
\begin{abstract}
The class of generalized shearlet dilation groups has recently been developed to allow the unified treatment of various shearlet groups and associated shearlet transforms that had previously been studied on a case-by-case basis. We consider several aspects of these groups: First, their systematic construction from associative algebras, secondly, their suitability for the characterization of wavefront sets, and finally, the question of constructing embeddings into the symplectic group in a way that intertwines the quasi-regular representation with the metaplectic one. For all questions, it is possible to treat the full class of generalized shearlet groups in a comprehensive and unified way, thus generalizing known results to an infinity of new cases. Our presentation emphasizes the interplay between the algebraic structure underlying the construction of the shearlet dilation groups, the geometric properties of the dual action, and the analytic properties of the associated shearlet transforms.
\end{abstract}

\section{Introduction}
\label{sec:1}
{This chapter is concerned with several important aspects of modern signal analysis. Usually, signals are modeled as elements of function spaces such as $L^2$ or Sobolev spaces. To analyze such a signal and to extract the information of interest from it, the first step is always to decompose the signal into suitable building blocks. This is performed by  transformation, i.e., the signal is mapped into function spaces on an underlying parameter set, and then the signal is processed and analyzed by studying and modifying the resulting coefficients. By now, a whole zoo of suitable transforms have been introduced and analyzed such as the Fourier transform, the Gabor transform, or the wavelet transform, just to name a few. Some of them have  already been very succesful, e.g., the Fourier transform works excellently for signals that are well-localized in the frequency domain, whereas wavelets are often the method of choice for the analysis of piecewise smooth signals with well-localized singularities such as edges 
in an image. Which transform to choose obviously depends on the application, i.e., on the type of information one wants to detect from the signal. However, in recent years, it has turned out that a serious bottleneck still has to be removed. Most of the classical transforms such as the wavelet transform perform suboptimally when it comes to the detection of directional information. The reason is very simple: most of these transforms are essentially isotropic, whereas directional information is of anisotropic nature. This observation triggered many innovative studies how to design new building blocks that are particularly tuned to this problem, such as curvelets \cite{CaDo2}, contourlets \cite{DoVe}, ridgelets \cite{CaDo1} and many others.
In this chapter, we are in particular interested in one specific contribution to this problem, i.e., the shearlet approach. Shearlets are new affine representation systems that are based on translations, shearings, and anisotropic dilations. We refer to the monograph \cite{shearlet_book} for an overview. Among all the new approaches, the shearlet transform stands out for the following reason: the continuous shearlet transform can be derived from a square-integrable representation of a specific group, the full shearlet group \cite{DKMSST, DaKuStTe, DaStTe10}. This property is not only of academic interest but has the important consequence that the whole powerful machinery derived in the realm of square-integrable group representations such as reproducing kernels, inversion formulas etc. can directly be employed. This feature of the shearlet transform clearly has strengthened the interest in the shearlet theory, and many important results concerning the group-theoretical background have been derived so far. It 
is the aim of this chapter to push forward, to clarify and to unify this theory with respect to several important aspects. Our main objectives can be described as follows.}

{After the full shearlet group has been discovered, the question arose if other suitable concepts of shearlet groups could be constructed. A first example was the shearlet Toeplitz group \cite{DaTe10}, where the shearing part of the group has a Toeplitz structure. As we will see later in Subsection \ref{subsect:ex} of this chapter, the full shearlet group and the shearlet Toeplitz group are in a certain sense the ``extreme'' cases of a general construction principle. In this view, the full shearlet group corresponds to the nilpotency class $n=2$, whereas the Toeplitz case corresponds to the nilpotency class $n=d$, where $d$ denotes the space dimension. Therefore, one would conjecture that there should be a lot of examples ``in between''. Indeed, in \cite{Fu_RT}, a positive answer has been given, and a first classification of low-dimensional shearlet groups has been derived. 
In this chapter, we further extend these results and present an approach to the systematic 
construction of suitable shearlet groups. The 
starting point is a general class of shearlet groups introduced in \cite{Fu_RT}. We say that a dilation group $H$ is a shearlet group if every $h \in H$ can be written as $h=\pm ds, d \in D, s \in S$ where D is a diagonal {\em scaling subgroup} and $S$ denotes a connected, closed abelian matrix group, the {\em shearing subgroup}. The key to understanding and constructing shearing subgroups lies in the realization that their associated Lie algebras carry a very useful associative structure. This associative structure also greatly facilitates the task of identifying the diagonal scaling groups compatible with a given shearing subgroup. Through the notion of Jordan-H\"{o}lder bases the problem of characterizing all suitable scaling group generators translates to a rather simple linear system of equations. It turns out that all examples known so far are special cases of this general construction.}

{In recent studies, it has also been observed that shearlets provide a very powerful tool in microlocal analysis \cite{grohs}, e.g., to determine the local regularity of a function. In the one-dimensional case, pointwise smoothness can very efficiently be detected by examining the decay of the continuous wavelet transform as the scale parameter $a$ tends to zero \cite{JaffardMeyer}. In the multivariate setting, pointwise smoothness does not cover all the geometic information one might be interested in. E.g., if the function under consideration exhibits singularities, one usually wants to know in which direction the function is singular. This can be described by the so-called {\em wavefront set} of a distribution. It has turned out that the continuous shearlet transform can be employed to detect this wavefront set, once again by studying its decay as the scaling parameter tends to zero. This property has been first observed in \cite{KuLa}, we also refer to \cite{grohs} for an overview. In \cite{FeFuVo}, this 
concept has been generalized to much more general classes of dilation groups. It has been shown that under natural assumptions, a wavefront set can again be detected by the decay of the voice transform. Essentially, two fundamental conditions are needed, that are related with the {\em dual action} of the dilation group $H$: the dual action must be microlocally admissible in direction $\xi$ and it must satisfy the $V$-cone approximation property at $\xi$, see Section \ref{subseq:crit_wave} for the precise definitions. If these properties hold for one point $\xi_0$ in the open dual orbit, a characterization of wavefront sets is possible. In this chapter, we show that both properties are satisfied for our general construction of shearlet dilation groups, provided that the infinitesimal generator $Y$ of the scaling subgroup satisfies $Y=\mbox{diag}(1, \lambda_2, \ldots, \lambda_d), 0 < \lambda_i<1, 2 \leq i \leq d$. Consequently, characterizations of wavefront sets are possible for a huge subclass of our general 
construction. It is worth mentioning that anisotropic dilations are necessary for the detection of wavefront sets, in particular the classical (isotropic) continuous wavelet transform would not do the job.}

{A third important issue we will be concerned with in this chapter is the relations of our general shearlet groups to other classical groups, in particular to the symplectic groups $Sp(d, \Bbb{R})$. The symplectic groups are one of the most important classical groups, because they play a prominent role in classical mechanics. We therefore investigate to which extent our shearlet dilation groups can be embedded into symplectic groups, in a way that intertwines the quasi-regular representation with the metaplectic representation. For the full shearlet groups and the shearlet Toeplitz groups, these issues have been studied in \cite{2014-differentfaces}, see also \cite{emily}. Their connected components can indeed be embedded into the symplectic groups, which yields  group isomorphisms of the positive parts of shearlet groups with the so--called TDS(d) subgroups that have already been studied in \cite{2013-demari-devito}. In this chapter, we generalize this result to dilation groups of the form  $G=\Bbb{R}^d \rtimes H$, 
where $H$ is a subgroup of $T(d,\Bbb{R})_{+}=\{h\in {\rm GL}(d,\Bbb{R}): \text{$h_{1,1}>0$ and $h_{i,j}=0$ for every $i>j$}\}$. We show that for any such group there exists a group embedding $\phi: G \rightarrow Sp(d, \Bbb{R})$, and that its quasi-regular representation is unitarily equivalent to $\mu \circ \phi$, where $\mu$ denotes the metaplectic representation of $Sp(d, \Bbb {R})$. Since the positive part of any shearlet group falls into this general category, the desired embeddings for shearlet groups follow from this result. Let us also mention the following very interesting fact: for the full shearlet dilation groups, such embeddings are never possible. Indeed, in \cite{2014-differentfaces} it has been shown that already for the two-dimensional full shearlet group there does not exist an injective continuous homomorphism into $Sp(2,\Bbb{R})$ or into any of its coverings.}

{Let us also mention a nice by-product of our construction. In recent studies \cite{DaHaTe12, DaKuStTe, DaStTe12, DaStTe10}, an interesting relation of the shearlet approach to the coorbit theory derived by Feichtinger and Gr\"{o}chenig \cite{FeiGr0, FeiGr1, FeiGr2, FeiGr3} has been established. Based on a square integrable group representation, coorbit space theory gives rise to canonical associated smoothness spaces, where smoothness is measured by the decay of the underlying voice transform. In \cite{DaHaTe12, DaKuStTe, DaStTe12, DaStTe10}, it has been shown that all the conditions needed in the coorbit setting can be satisfied for the full shearlet and the shearlet Toeplitz groups. In \cite{Fu_coorbit}, the coorbit approach has been extended to much more general classes of dilation groups, and it turns out that the analysis from \cite{Fu_coorbit} also carries over to the construction presented in this chapter, so that we obtain many new examples of coorbit spaces. In particular, we refer to \cite{Fu_RT} 
for explicit criteria for compactly supported functions that can serve as atoms in the coorbit scheme.}

{This chapter is organized as follows. In Sections \ref{sec:2} and \ref{sec:3_Consruct}, we present our construction of generalized shearlet dilation groups. After discussing the basic notations and definitions in the Subsections \ref{subsec:2.1} and \ref{sect:gen_shearlets}, in Subsection \ref{subsec:shear_Lie} we start with the systematic investigation of the Lie algebras  of shearing subgroups. One of the main results is Lemma \ref{lem:desc_sg} which provides a complete description of a shearing subgroup in terms of the canonical basis of its Lie algebra. This fact can be used to derive linear systems whose nonzero solutions determine the anisotropic scaling subgroups that are compatible with $S$ (Lemma \ref{lem:char_diag}). These relationships are then used in Section \ref{sec:3_Consruct} to derive a systematic construction principle. The canonical basis can be directly computed from the structure constants of a Jordan-H\"older basis (Lemma \ref{lem:const_admis}). The power of this approach is 
demonstrated by several examples. In Section \ref{sec:anis_wave}, we study the suitability of shearlet dilation groups for the characterization of wavefront sets. Here the main result is Theorem \ref{thm:wfset_general_shearlet} which shows that shearlet groups with anisotropic dilations and suitable infinitesimal generators for the scaling subgroups do the job. The proof is performed by verifying the basic conditions from \cite{FeFuVo}. The last section is concerned with the embeddings of shearing dilation groups into symplectic groups. The main result of this section is Theorem \ref{thm:embedding} which shows that the huge class of semidirect products of the form $G= \Bbb{R}^d \rtimes H$, where $H$ is a subgroup of $T(d,\Bbb{R})_{+}$ can be embedded into $Sp(d, \Bbb{R})$.}

\section{Generalities on shearlet dilation groups}
\label{sec:2}

\subsection{Basic notations and definitions}
\label{subsec:2.1}
This chapter is concerned with the construction and analysis of large classes of generalized shearlet transforms. These transforms are constructed by fixing a suitable matrix group, the so-called {\em shearlet dilation group}. By construction, these groups have a naturally associated isometric continuous wavelet transform, which will be the generalized shearlet transform. In this subsection, we summarize the necessary notation related to general continuous wavelet transforms in higher dimensions. We let ${\rm GL}(d,\Bbb{R})$ denote the group of invertible $d\times d$-matrices. We use $I_d$ to denote the $d \times d$ identity matrix. The Lie algebra of ${\rm GL}(d,\Bbb{R})$ is denoted by $\mathfrak{gl}(d,\Bbb{R})$, which is the space of all $d \times d$
matrices, endowed with the Lie bracket $[X,Y] = XY-YX$. Given $h\in \mathfrak{gl}(d,\Bbb{R})$ its (operator) norm is denoted by
\[
\|h\|=\sup_{ |x|\leq 1} | h x|.
\]

We let $\exp: \mathfrak{gl}(d,\Bbb{R}) \to {\rm GL}(d,\Bbb{R})$ denote the exponential map, defined by
 \[
 \exp(X) = \sum_{k=0}^\infty \frac{X^k}{k!} 
 \]
 known to converge absolutely for every matrix $X$. Given a closed subgroup \linebreak{$H< {\rm GL}(d,\Bbb{R})$}, the associated Lie algebra of $H$ is denoted by $\mathfrak{h}$, and it is defined as tangent space of $H$ at $I_d$, or, equivalently, as the set of all
 matrices $X$ with $\exp(\Bbb{R} X) \subset H$. It is a Lie subalgebra of $\mathfrak{gl}(d,\Bbb{R})$, i.e., it is closed under taking Lie brackets. 

A matrix group of particular importance for the following is the group $T(d,\Bbb{R})$ of upper triangular matrices with ones on the diagonal. Elements of $T(d,\Bbb{R})$ are called {\em unipotent}. Its Lie algebra is the subspace $\mathfrak{t}(d,\Bbb{R}) \subset \mathfrak{gl}(d,\Bbb{R})$ of all strictly upper triangular matrices. It is well-known that $\exp:\mathfrak{t}(d,\Bbb{R}) \to T(d,\Bbb{R})$ is a homeomorphism \cite{HiNe}. In particular, whenever $\mathfrak{s} \subset\mathfrak{t}(d,\Bbb{R})$ is a Lie subalgebra, the exponential image $\exp(\mathfrak{s})$ is a closed, simply connected and connected matrix group with Lie algebra given by $\mathfrak{s}$. Conversely, any connected Lie subgroup $S$ of $T(d,\Bbb{R})$ is closed, simply connected and $S=\exp(\mathfrak{s})$ where $\mathfrak{s}\subset \mathfrak{t}(d,\Bbb{R})$ is the corresponding Lie algebra, see Theorem~3.6.2 of \cite{raja}. 

For the definition of generalized wavelet transforms, we fix a closed matrix group $H < {\rm GL}(d,\Bbb{R})$, the so-called {\em dilation group}, and let $G =\Bbb{R}^d \rtimes H$. This is the group of affine mappings generated by $H$ and all translations. Elements of $G$ are denoted by pairs $(x,h) \in \Bbb{R}^d \times H$, and the product of two group elements is given by $(x,h)(y,g) = (x+hy,hg)$. The left Haar measure of $G$ is given by $d\mu_G(x,h) = |\det(h)|^{-1}dx dh$, where $dx$ and $dh$ are the Lebesgue measure and the (left) Haar measure of $\Bbb{R}^d$ and $H$, respectively.

The group $G$ acts unitarily on ${\rm L}^2(\Bbb{R}^d)$ by the {\em quasi-regular representation} defined by
\begin{equation} \label{eqn:def_quasireg}
[\pi(x,h) f](y) = |{\rm det}(h)|^{-1/2} f\left(h^{-1}(y-x)\right)~.
\end{equation}
We assume that $H$ is chosen {\em irreducibly admissible}, i.e. such that $\pi$ is an {\em (irreducible) square-integrable representation}. Recall that a representation is irreducible if the only invariant closed subspaces of the representation space are the trivial ones. Square-integrability of the representation means that there exists at least one nonzero {\em admissible vector} $\psi\in {\rm L}^2(\Bbb{R}^d)$ such that the matrix coefficient
\begin{eqnarray}
(x,h) \mapsto \langle \psi, \pi(x,h) \psi \rangle\nonumber
\end{eqnarray} is in ${\rm L}^2(G)$, which is the ${\rm L}^2$-space associated
to the left Haar measure $d\mu_G $. In this case the associated wavelet transform
\begin{equation}
W_\psi : {\rm L}^2(\Bbb{R}^d) \ni f \mapsto \left(
(x,h) \mapsto \langle f, \pi(x,h) \psi \rangle \right) \in L^2(G)
\end{equation} 
is a scalar multiple of an isometry, which gives rise to the {\em wavelet inversion formula}
\begin{equation} \label{eqn:wvlt_inv}
f = \frac{1}{c_\psi} \int_G W_\psi f(x,h) \pi(x,h) \psi ~
d\mu_G(x,h)~,
\end{equation}
 where the integral is in the weak sense.

We note that the definition of $W_\psi f$ also makes sense for tempered distributions $f$, as soon as the wavelet $\psi$ is chosen as a Schwartz function and the ${\rm L}^2$-scalar product is properly extended to a sesquilinear map $\mathcal{S}' \times \mathcal{S} \to \Bbb{C}$. Analogs of the wavelet inversion formula are not readily available in this general setting, but it will be seen below that the transform has its uses, for example in the characterization of wavefront sets.

Most relevant properties of the wavelet transform are in some way or another connected to the {\em dual action}, i.e., the (right) linear action $\Bbb{R}^d \times H \ni(\xi,h) \mapsto h^T \xi$. For example, $H$ is irreducibly admissible if and only if the dual action has a single open orbit $\mathcal{O} = \{ h^T \xi_0 : h \in H \} \subset \Bbb{R}^d$ of full measure (for some $\xi_0 \in \mathcal{O}$), such that in addition the stabilizer group $H_{\xi_0} = \{ h \in H : h^T \xi_0 = \xi_0 \}$ is compact \cite{Fu10}. This condition does of course not depend on the precise choice of $\xi_0 \in \mathcal{O}$. The dual action will also be of central importance to this chapter.

\subsection{Shearlet dilation groups}
\label{sect:gen_shearlets}

The original shearlet dilation group was introduced in \cite{DKMSST, DaKuStTe}, as
\[
 H = \left\{ \pm \left( \begin{array}{cc} a & b \\ 0 &  a^{1/2} 
 \end{array} \right) : a>0, b \in \Bbb{R} \right\}~.
\]
The rationale behind this choice was that the anisotropic scaling, as prescribed by the 
exponents $1, 1/2$ on the diagonal, combines with the shearing (controlled by the parameter $b$) to provide a system
of generalized wavelets that are able to swiftly adapt to edges of all orientations (except one). 
A mathematically rigourous formulation of this property is the result, due to Kutyniok and Labate, that the continuous shearlet transform
characterizes the wavefront set \cite{KuLa}. 
Approximation-theoretic properties of a different, more global kind were the subject of the chapter 
\cite{DaKuStTe} describing the so-called {\em coorbit spaces} defined in terms of
weighted integrability conditions on the wavelet coefficients. 

The original shearlet dilation group has since been generalized to higher dimensions. 
Here, the initial construction was introduced in \cite{DaStTe10}, and further studied, e.g., in \cite{DaStTe12,CzaKi12}. 
It is a matrix group in dimension $d \ge 3$ defined by 
\begin{equation}
\label{eqn:clsh_ddim}
S = \left\{ \pm \left( 
\begin{array}{cccc} a & s_1 &\ldots & s_{d-1} \\ 
  & a^{\lambda_2} &   &  \\
		&  & \ddots & \\
		&  & & a^{\lambda_d}  
		\end{array} \right) : a >0 ,~s_1,\ldots,s_{d-1} \in \Bbb{R} \right\}.
\end{equation}
Here $\lambda_2,\ldots,\lambda_d$ are positive exponents, often chosen as $\lambda_2 = \ldots = \lambda_d = 1/2$. It should, however, be noted that they can be chosen essentially arbitrarily (even negative), without affecting the wavelet inversion formula.  Coorbit space theory is applicable to all these groups as well \cite{DaStTe12,Fu_coorbit}. Furthermore, it was recently shown that the associated shearlet transform also characterizes the wavefront set \cite{FeFuVo}, as long as the exponents $\lambda_2,\ldots,\lambda_d$ are strictly between zero and one. 

A second, fundamentally different class of shearlet groups are the {\em Toeplitz shearlet groups} introduced in \cite{DaTe10} and further studied in \cite{DaHaTe12}. These groups are given by
\begin{equation} 
\label{eqn:tsh_ddim} 
H = \left\{ \pm \!\left( 
\begin{array}{ccccccc} a & \hspace*{2mm} s_1 & \hspace*{2mm}s_2 & \hspace*{2mm} \ldots & \hspace*{2mm}\ldots & \hspace*{2mm}\ldots & \hspace*{2mm} s_{d-1} \\ 
  & a & s_1 & s_2 &\ldots &  \ldots & s_{d-2} \\
		&  & \ddots & \ddots & \ddots &  & \vdots \\
                  &  & & \ddots & \ddots & \ddots & \vdots \\
		&  &  & &\ddots & \ddots & s_2\\  & & &  & & \ddots & s_1 \\
&  & & & & & a \
\end{array} \right) \!: a>0,~s_1,\ldots,s_{d-1} \in \Bbb{R} \right\}. 
\end{equation}
Coorbit space theory can be applied to these groups as well \cite{DaHaTe12,Fu_coorbit}. By \cite[Lemma 4.10]{FeFuVo}, the fact that $H$ contains nontrivial multiples of the identity implies that $H$ does not characterize the wavefront set. However, it will be shown below that by properly adjusting the diagonal entries, it is possible to construct a closely related group $H'$ that does lend itself to the characterization of the wavefront set. 

A closer inspection of the two higher-dimensional families of shearlet group reveals several  common traits: fix one of the above-listed groups $H$. Then each $h \in H$ factors as
\[
 h = \pm {\rm diag}(a,a^{\lambda_2},\ldots,a^{\lambda_d}) \cdot u 
\]where the first factor denotes the diagonal matrix with the same diagonal entries as $h$, and the second factor $u$ is unipotent. In fact, this factorization is necessarily unique. Furthermore, denoting by $D$ the set of all diagonal matrices occurring in such factorizations, and by $S$ the set of all unipotent ones that arise, it is easy to see that $D$ (and consequently $S$) are closed subgroups of $H$. Finally, one readily verifies that the groups $S$ that occur in the examples are in fact commutative. We will now use these properties to define a general class of shearlet dilation groups, that we will study in this chapter:

\begin{definition}
Let $H < {\rm GL}(d,\Bbb{R})$ denote an irreducibly admissible dilation group. $H$ is called {\em generalized shearlet dilation group}, if there exist two closed subgroups $S, D < H$ with the following properties:
\begin{enumerate}[label=(\roman*)]
\item $S$ is a connected abelian Lie subgroup of $T(d,\Bbb{R})$;
\item $D = \{ \exp(r Y) : r \in \Bbb{R} \}$ is a one-parameter group, where $Y$ is a diagonal matrix;
\item Every $h \in H$ can be written uniquely as $h = \pm d s$, with $d \in D$ and $s \in S$.
\end{enumerate}
$S$ is called the {\em shearing subgroup} of $H$, and $D$ is called the {\em diagonal complement} or {\em scaling subgroup} of $H$.
\end{definition}
\begin{remark}
As noted in Subsection \ref{subsec:2.1}, $S$ is closed, simply connected and the exponential map is a diffeomorphism from its Lie algebra $\mathfrak{s}$ onto $S$.
\end{remark}
 \begin{remark}
 The class of shearlet dilation groups was initially defined in \cite{Fu_RT}, and for some of the following results and observations, more detailed proofs can be found in that paper. In particular, it was shown there that coorbit space theory applies to all generalized shearlet dilation groups. In fact, it is possible to construct wavelet frames with  compactly supported atoms, with frame expansions that, depending on the provenance of the signal, converge in a variety of coorbit space norms simultaneously. 
 \end{remark}
 
As will be seen below, shearlet dilation groups can be constructed systematically. The natural order in finding the constituent subgroups $S,D$ is to first pick a candidate for $S$, and then determine the infinitesimal generators of the one-parameter group $D$ that are compatible with $S$. The details of this programme are given in the next subsections. 
 
\subsection{Shearlet dilation groups and their Lie algebras}
\label{subsec:shear_Lie}
It is the aim of this subsection to give an overview of the most important structural properties of shearlet dilation groups. 
The following proposition gives a first characterization of these groups, see \cite[Proposition 4.3]{Fu_RT}. 
\begin{proposition} \label{prop:char_shearsubs}
Let $S$ denote a connected abelian subgroup of $T(d,\Bbb{R})$.  Then the following are equivalent:
\begin{enumerate}[label=(\roman*)]
\item $S$ is the shearing subgroup of a generalized shearlet dilation group;
\item There is $\xi \in \Bbb{R}^d$ such that $S$ acts freely on $S^T \xi$ via the dual action, and in addition, ${\rm dim}(S) = d-1$;
\item The matrix group $A= \{ rs: s \in S,  r \in \Bbb{R}^\times \}$ is an abelian irreducibly admissible dilation group. It is also a shearlet dilation group. 
\end{enumerate}
\end{proposition}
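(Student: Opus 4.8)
The plan is to establish the cyclic chain of implications (i)$\Rightarrow$(ii)$\Rightarrow$(iii)$\Rightarrow$(i). The last implication is essentially free: statement (iii) already asserts that $A$ is a shearlet dilation group, and writing $rs = \pm(|r| I_d)s$ exhibits every element of $A$ as (sign)$\times$(diagonal)$\times$(unipotent), with $D = \{tI_d : t>0\} = \exp(\Bbb R I_d)$ and the given $S$; the factorization is unique because $|\det(rs)| = |r|^d$ determines $|r|$ and hence $s$. Thus $S$ is the shearing subgroup of $A$, giving (iii)$\Rightarrow$(i). The genuine work is in (ii)$\Rightarrow$(iii).

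For (i)$\Rightarrow$(ii), let $H = \pm DS$ be a generalized shearlet dilation group with shearing subgroup $S$. By the admissibility criterion recalled in Subsection~\ref{subsec:2.1}, there is $\xi_0$ whose $H$-orbit is open with compact stabilizer $H_{\xi_0}$. The identity component $H_{\xi_0}^\circ$ is a connected compact subgroup of $H^\circ = DS$; since the product map $D \times S \to DS$ is a diffeomorphism, $DS$ is a simply connected solvable Lie group and therefore contains no nontrivial compact subgroup. Hence $H_{\xi_0}$ is finite, $\dim H_{\xi_0} = 0$, and the openness of the orbit forces $d = \dim H = \dim D + \dim S = 1 + \dim S$, i.e. $\dim S = d-1$. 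Finally $S_{\xi_0} = S \cap H_{\xi_0}$ is a finite subgroup of the torsion-free group $S$, hence trivial, so $S$ acts freely at $\xi = \xi_0$.

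The heart of the proof is (ii)$\Rightarrow$(iii). First I would show that freeness forces $\xi_1 \neq 0$. Since $\mathfrak{s} \subset \mathfrak{t}(d,\Bbb R)$ is strictly upper triangular, for $X \in \mathfrak s$ one has $(X^T\xi)_i = \sum_{j<i} X_{ji}\xi_j$; if $m = \min\{i : \xi_i \neq 0\}$ then $(X^T\xi)_i = 0$ for all $i \le m$, so $\mathfrak s^T\xi \subseteq \langle e_{m+1},\ldots,e_d\rangle$, a space of dimension $d-m$. Freeness means $S_\xi = \{I\}$, equivalently $\ker(X \mapsto X^T\xi) = 0$, so this map is injective and $d-1 = \dim\mathfrak s \le d-m$, forcing $m = 1$. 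Consequently $\mathfrak s^T\xi = \langle e_2,\ldots,e_d\rangle$ and, as $\xi_1 \neq 0$, the Lie algebra $\mathfrak a = \Bbb R I_d \oplus \mathfrak s$ of $A$ satisfies $\mathfrak a^T \xi = \Bbb R\xi + \mathfrak s^T\xi = \Bbb R^d$; hence the $A$-orbit of $\xi$ is open. Because $S$ is unipotent, its orbit $S^T\xi$ is closed (Kostant--Rosenlicht), and being also open in the hyperplane $\{v : v_1 = \xi_1\}$ it fills that hyperplane; dilating by $\Bbb R^\times$ then gives $A^T\xi = \{v : v_1 \neq 0\}$, a single open orbit of full measure. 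The stabilizer is trivial: $rs^T\xi = \xi$ forces $r\xi_1 = \xi_1$, so $r = 1$, and then $s = I$ by freeness. Thus $A$ is abelian and irreducibly admissible, and the factorization above makes it a shearlet dilation group, which is (iii).

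The main obstacle is the global, rather than infinitesimal, part of (ii)$\Rightarrow$(iii): passing from \emph{the orbit is open} to \emph{the orbit is a single set of full measure}. The tangent-space computation only yields openness at $\xi$; to control the complement I rely on the closedness of unipotent orbits to identify $S^T\xi$ with the entire affine hyperplane, after which the dilation by $\Bbb R^\times$ (crucially including negative scalars, which merge the two half-spaces into one orbit) produces the co-null open orbit. Establishing $\xi_1 \neq 0$ and checking that $A$ is genuinely closed as a matrix group are the remaining points that exploit the triangular structure of $\mathfrak s$ rather than soft arguments.
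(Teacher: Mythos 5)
The paper does not actually prove this proposition: it is quoted verbatim from \cite[Proposition 4.3]{Fu_RT}, so there is no in-text argument to compare yours against. Judged on its own terms, your cyclic chain of implications is sound, and the two decisive steps of (ii)$\Rightarrow$(iii) --- the pigeonhole argument forcing $\xi_1\neq 0$ and $\mathfrak{s}^T\xi={\rm span}(e_2,\ldots,e_d)$, and the ``open plus closed in a connected hyperplane'' argument identifying $S^T\xi$ with $\{v:v_1=\xi_1\}$ before dilating by $\Bbb{R}^\times$ --- are correct and are exactly where the content lies. A few points you leave implicit deserve a line each in a final write-up. First, the equivalence of freeness with injectivity of $X\mapsto X^T\xi$ on $\mathfrak{s}$ uses that for nilpotent $N$ one has $\ker(\exp(N)-I_d)=\ker N$ (both $\exp(N)-I_d$ and $\log\exp(N)$ factor as $N$ times a unipotent, hence invertible, polynomial in $N$); without this, $\exp(X)^T\xi=\xi$ does not obviously give $X^T\xi=0$. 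Second, Kostant--Rosenlicht requires $S$ to be a unipotent \emph{algebraic} group; this holds because $\exp$ and $\log$ are mutually inverse polynomial maps between $\mathfrak{t}(d,\Bbb{R})$ and $T(d,\Bbb{R})$, so $S=\log^{-1}(\mathfrak{s})$ is Zariski closed. Third, openness of $S^T\xi$ in the hyperplane at \emph{every} point of the orbit (not just at $\xi$) is what you need before invoking connectedness; this is fine because (ii) asserts freeness on all of $S^T\xi$, so $\mathfrak{s}^T\eta$ is $(d-1)$-dimensional for each $\eta$ in the orbit, but you should say so. Finally, the closedness of $A=\Bbb{R}^\times S$ in ${\rm GL}(d,\Bbb{R})$, which you flag but do not prove, follows by reading off $r$ as the common diagonal entry of $rs$ and using that $S$ is closed. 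In (i)$\Rightarrow$(ii) you tacitly assume $\dim D=1$ (i.e.\ $Y\neq 0$); if one admits the degenerate $Y=0$, the open-orbit condition rules it out anyway since then $\dim H\le d-1$. None of these is a genuine gap, only compression.
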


The fundamental observation made in \cite[Remark 9]{Fu98} is that if $A$ is abelian and admissible, as in part (iii) of the above proposition, then its Lie algebra $\mathfrak{a}$ is in fact an {\em associative} subalgebra containing the identity element, hence it is closed under matrix multiplication. This associative structure is in many ways decisive. To begin with, one has the relations
\[
\mathfrak{a} = {\rm span}(A)~,~ A = \mathfrak{a}^{\times}
\] i.e., $A$ consists precisely of the multiplicatively invertible elements of the associative algebra $\mathfrak{a}$. We will see in Subsection \ref{subsect:const_sg} below that this connection to associative algebras can be used for the systematic --even exhaustive-- construction of shearing subgroups. 
 
There is however a second ingredient, that is more directly related to the properties of the dual action. It is described in the following lemma, see \cite[Corollary 4.7]{Fu_RT}. We use $e_1,\ldots,e_d$ for the canonical basis of $\Bbb{R}^d$.
\begin{lemma} \label{lem:char_admiss}
 Let $S$ denote a connected abelian subgroup of $T(d,\Bbb{R})$ of dimension $d-1$, with Lie algebra $\mathfrak{s}$.  Then the following are equivalent:
\begin{enumerate}[label=(\roman*)]
 \item $S$ is a shearing subgroup;
 \item There exists a unique basis $X_2,\ldots,X_d$ of $\mathfrak{s}$ with $X_i^T e_1 = e_i$, for all $i=2,\ldots,d$.
 \end{enumerate}
 We call the basis from part (ii) the {\em canonical basis} of $\mathfrak{s}$.
\end{lemma}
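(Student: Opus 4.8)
My plan is to recast condition (ii) as a statement about a single linear map and then read off both implications from Proposition~\ref{prop:char_shearsubs} together with the associative structure of $\mathfrak{a}=\Bbb{R}I_d\oplus\mathfrak{s}$.

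First I would introduce the linear map $\Phi\colon\mathfrak{s}\to\Bbb{R}^d$, $\Phi(X)=X^Te_1$. Since every $X\in\mathfrak{s}$ is strictly upper triangular, $X^T$ is strictly lower triangular and $X^Te_1$ (its first column) has vanishing first entry; hence $\Phi$ maps $\mathfrak{s}$ into ${\rm span}(e_2,\dots,e_d)$. Because $\dim\mathfrak{s}=d-1=\dim{\rm span}(e_2,\dots,e_d)$, condition (ii) is \emph{equivalent} to the assertion that $\Phi$ is a linear isomorphism of $\mathfrak{s}$ onto ${\rm span}(e_2,\dots,e_d)$: the $X_i$ are then forced to be $\Phi^{-1}(e_i)$, which simultaneously yields the uniqueness claim. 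Thus the whole lemma reduces to proving that $S$ is a shearing subgroup if and only if $\Phi$ is bijective, and since the dimensions agree it suffices to control injectivity (equivalently surjectivity).

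Next I would set up the dictionary between these linear conditions and the dual action. The map $\Phi_\xi(X)=X^T\xi$ is exactly the differential at the identity of the orbit map $s\mapsto s^T\xi$. Writing $s=\exp(X)$ (the exponential is a diffeomorphism onto $S$, and $X^T$ is nilpotent), one gets $\exp(X^T)\xi=\xi$ if and only if $X^T\xi=0$, because $\exp(X^T)-I_d=X^T\psi(X^T)$ with $\psi(X^T)$ invertible and commuting with $X^T$. Hence the stabilizer equals $\exp(\ker\Phi_\xi)$, so $S$ acts freely on $S^T\xi$ exactly when $\Phi_\xi$ is injective; taking $\xi=e_1$ this is half of what we need. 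Likewise, passing to $A=\Bbb{R}^\times S$ with associative Lie algebra $\mathfrak{a}=\Bbb{R}I_d\oplus\mathfrak{s}$, the $A$-orbit of $\xi$ is open precisely when $Z\mapsto Z^T\xi$ is a bijection $\mathfrak{a}\to\Bbb{R}^d$, i.e. when $\xi$ is a cyclic vector for the commutative associative transpose algebra $\mathfrak{a}^T$.

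The implication (ii)$\Rightarrow$(i) is then immediate: bijectivity of $\Phi=\Phi_{e_1}$ gives a free $S$-action on $S^Te_1$, and together with $\dim S=d-1$ Proposition~\ref{prop:char_shearsubs} shows $S$ is a shearing subgroup. The genuine obstacle is the converse, since Proposition~\ref{prop:char_shearsubs} only furnishes \emph{some} admissible covector, whereas (ii) is anchored at the specific vector $e_1$. My plan to bridge this gap is purely algebraic. From (i) and Proposition~\ref{prop:char_shearsubs}(iii), $A$ is abelian and admissible, so some $\eta$ is cyclic for $\mathfrak{a}^T$; since the first coordinate of $A\cdot\eta$ is $\{r\eta_1:r\in\Bbb{R}^\times\}$, openness forces $\eta_1\neq0$, and after rescaling I may assume $\xi_1=1$ with $\xi$ still cyclic. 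Let $N_1\in\mathfrak{a}^T$ be the unique element with $N_1\xi=e_1$. Decomposing $N_1=cI_d+M$ with $M\in\mathfrak{s}^T$ strictly lower triangular and comparing first entries in $N_1\xi=e_1$ gives $c=\xi_1=1$, so $N_1=I_d+M$ is a unit of $\mathfrak{a}^T$. Consequently $\mathfrak{a}^Te_1=(\mathfrak{a}^TN_1)\xi=\mathfrak{a}^T\xi=\Bbb{R}^d$, i.e. $e_1$ is cyclic as well; unravelling $\mathfrak{a}^Te_1=\Bbb{R}e_1+\Phi(\mathfrak{s})$ then forces $\Phi(\mathfrak{s})={\rm span}(e_2,\dots,e_d)$, so $\Phi$ is bijective and (ii) holds. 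The crux is therefore the observation that the element of $\mathfrak{a}^T$ carrying a cyclic vector $\xi$ to $e_1$ is automatically invertible, which is exactly what transports admissibility from an arbitrary base point to $e_1$.
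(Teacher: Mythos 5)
The paper does not actually prove Lemma~\ref{lem:char_admiss}; it quotes it from \cite[Corollary 4.7]{Fu_RT}, so there is no in-text argument to compare yours against. Judged on its own terms, your proof is correct. The reduction of (ii) to bijectivity of $\Phi(X)=X^Te_1$ onto ${\rm span}(e_2,\dots,e_d)$ is clean (the dimension count $\dim\mathfrak{s}=d-1$ makes existence, spanning, and uniqueness collapse into one statement), the identification of the stabilizer with $\exp(\ker\Phi_\xi)$ via $\exp(X^T)-I_d=X^T\psi(X^T)$ with $\psi(X^T)$ unipotent is sound, and (ii)$\Rightarrow$(i) then follows from Proposition~\ref{prop:char_shearsubs}(ii). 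The genuinely nontrivial step, as you correctly identify, is moving the base point in the converse: Proposition~\ref{prop:char_shearsubs} only yields \emph{some} cyclic vector $\eta$ for $\mathfrak{a}^T$, and your argument that $\eta_1\neq 0$ (else the orbit lies in $\{\xi_1=0\}$), followed by the observation that the unique $N_1\in\mathfrak{a}^T$ with $N_1\xi=e_1$ has the form $I_d+M$ with $M$ strictly lower triangular and is therefore a unit of the associative algebra $\mathfrak{a}^T$, correctly transports cyclicity to $e_1$. This is exactly where the associative structure of $\mathfrak{a}=\Bbb{R}I_d\oplus\mathfrak{s}$ (quoted by the paper from \cite{Fu98}) is indispensable, since you need $N_1^{-1}=I_d-M+M^2-\cdots$ to lie in $\mathfrak{a}^T$; a purely Lie-algebraic hypothesis would not give this. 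The only cosmetic point worth making explicit is that $\mathfrak{a}^T$ is itself closed under multiplication because $\mathfrak{a}$ is commutative, so $X^TY^T=(XY)^T$; otherwise the argument is complete.
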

The canonical basis plays a special role for the description of shearing subgroups. As a first indication of its usefulness, we note that all off-diagonal entries of the elements of shearing groups depend linearly on the entries in the first row. 
\begin{lemma} \label{lem:desc_sg}
 Let $S$ denote a shearing subgroup with Lie algebra $\mathfrak{s}$, and canonical basis $X_2,\ldots,X_d$ of $\mathfrak{s}$. Then the following holds:
 \begin{enumerate}[label=(\alph*)]
  \item $S = \{ I_d + X : X \in \mathfrak{s} \}$.
  \item Let $h \in S$ be written as 
  \[
     h = \left( \begin{array}{cccccc} 1 & h_{1,2} & \ldots & \ldots & \ldots & h_{1,d} \\
   0 & 1 & h_{2,3} &  \ldots & \ldots & h_{2,d} \\
    0 & 0 & \ddots &  \ddots & \vdots & \vdots \\
    0 & 0 & 0 &  \ddots & \ddots & \vdots \\
    0 & 0 & 0 & 0 & 1 & h_{d-1,d} \\
    0 & 0 & 0 & 0 & 0 & 1 
             \end{array} \right).
 \] Then
 \[
  h = I_d + \sum_{i=2}^d h_{1,i} X_i~. 
 \]
 \end{enumerate}
\end{lemma}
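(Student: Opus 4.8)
The plan is to exploit the associative structure of the algebra $\mathfrak{a} = \Bbb{R} I_d \oplus \mathfrak{s}$, which is the Lie algebra of the group $A = \{ rs : s \in S, r \in \Bbb{R}^\times \}$ furnished by Proposition \ref{prop:char_shearsubs}(iii). By the fundamental observation recalled above, $\mathfrak{a}$ is an associative subalgebra containing $I_d$, hence closed under matrix multiplication. The first step is to upgrade this to closure of $\mathfrak{s}$ itself: given $X, Y \in \mathfrak{s}$, the product $XY$ lies in $\mathfrak{a}$ and is strictly upper triangular (being a product of strictly upper triangular matrices); writing $XY = c\, I_d + Z$ with $Z \in \mathfrak{s}$ and comparing diagonals forces $c = 0$, so $XY = Z \in \mathfrak{s}$. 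Thus $\mathfrak{s}$ is a nilpotent associative algebra under matrix multiplication.

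With closure in hand, part (a) follows from the interplay between $\exp$ and $\log$. Recall from Subsection \ref{subsec:2.1} that $S = \exp(\mathfrak{s})$. For $X \in \mathfrak{s}$, nilpotency truncates the exponential series to a finite sum $\exp(X) = I_d + \sum_{k \geq 1} X^k/k!$, and every power $X^k$ lies in $\mathfrak{s}$; hence $\exp(X) - I_d \in \mathfrak{s}$ and $\exp(\mathfrak{s}) \subseteq I_d + \mathfrak{s}$. Conversely, for $Y \in \mathfrak{s}$ the logarithm series $\log(I_d + Y) = \sum_{k \geq 1} (-1)^{k+1} Y^k/k$ is likewise a finite sum lying in $\mathfrak{s}$, and it satisfies $\exp(\log(I_d + Y)) = I_d + Y$; hence $I_d + \mathfrak{s} \subseteq \exp(\mathfrak{s})$. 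Combining the two inclusions yields $S = \exp(\mathfrak{s}) = \{ I_d + X : X \in \mathfrak{s} \}$.

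For part (b) I would first use (a) to write $h = I_d + X$, with $X = h - I_d \in \mathfrak{s}$ uniquely determined. Expanding $X$ in the canonical basis as $X = \sum_{i=2}^d c_i X_i$, the point is to read off the coefficients from the defining property $X_i^T e_1 = e_i$ of Lemma \ref{lem:char_admiss}: this says precisely that the first row of $X_i$ is $e_i^T$. Consequently the first row of $X$ equals $\sum_{i=2}^d c_i e_i^T = (0, c_2, \ldots, c_d)$, while the first row of $X = h - I_d$ is $(0, h_{1,2}, \ldots, h_{1,d})$. Matching entries gives $c_i = h_{1,i}$, whence $h = I_d + \sum_{i=2}^d h_{1,i} X_i$.

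I expect the only real subtlety to be the first step --- establishing that $\mathfrak{s}$, and not merely $\mathfrak{a}$, is closed under matrix multiplication --- since this is exactly what allows both $\exp$ and $\log$ to preserve the affine space $I_d + \mathfrak{s}$, and it is the mechanism behind the seemingly surprising linearity of the exponential parametrization (which in general fails for unipotent groups). Once this closure is secured, part (a) reduces to the routine truncated series manipulations above, and part (b) is essentially immediate from the definition of the canonical basis.
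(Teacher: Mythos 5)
Your proof is correct, and it rests on the same key structural fact as the paper's: that $\mathfrak{s}$ is a nilpotent \emph{associative} matrix algebra, i.e.\ closed under matrix products. (You actually supply a detail the paper glosses over, namely the passage from multiplicative closure of $\mathfrak{a}=\Bbb{R}I_d\oplus\mathfrak{s}$ to that of $\mathfrak{s}$ itself via the diagonal comparison; this is a worthwhile addition.) Where you diverge is in how part (a) is concluded. The paper never touches the exponential map directly: it shows that $S_1=I_d+\mathfrak{s}$ is itself a closed, connected, simply connected matrix group (closure under products, Neumann-series inverses staying in $S_1$) whose tangent space at the identity is $\mathfrak{s}$, and then invokes the uniqueness of connected Lie subgroups of $T(d,\Bbb{R})$ with a prescribed Lie algebra to identify $S_1$ with $S$. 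You instead start from the fact, recalled in Subsection \ref{subsec:2.1}, that $S=\exp(\mathfrak{s})$, and compute $\exp(\mathfrak{s})=I_d+\mathfrak{s}$ directly by observing that the truncated $\exp$ and $\log$ series both preserve $\mathfrak{s}$. Your route is more computational and arguably more elementary (no appeal to uniqueness of integral subgroups), while the paper's route packages the same nilpotency input into a group-theoretic statement; both hinge identically on the closure of $\mathfrak{s}$ under multiplication, which you correctly identify as the one genuinely non-formal step. Your treatment of part (b) — reading off the coefficients from the first rows via $X_i^Te_1=e_i$ — is exactly the argument the paper leaves implicit.
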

\begin{proof}
 For part (a), denote the right-hand side by $S_1$. Since $\mathfrak{s}$ is an associative subalgebra consisting of nilpotent matrices, $S_1$ consists of invertible matrices, and it is closed under multiplication.
 Furthermore, the inverse of any element of $S_1$ can be computed by a Neumann series that breaks off after at most $d$ terms:
 \[
  (I_d + X)^{-1} = I_d  + \sum_{k=2}^{d-1} (-1)^{k-1} X^k~, 
 \]
and the result is again in $S_1$. Hence $S_1$ is a matrix group. It is
obviously closed and connected, with tangent space of
$S_1$ at the identity matrix given by $\mathfrak{s}$. It follows that $S_1$ is a Lie subgroup of $T(d,\Bbb R)$ and, hence, it is simply connected. Thus $S$ and $S_1$ are closed, connected, and simply connected subgroups sharing the same Lie algebra, hence they are
 equal. Now part (b) directly follows from (a) and the properties of the canonical basis. 
\end{proof}

We now turn to the question of characterizing the scaling subgroups $D$ that are compatible with a given shearing subgroup $S$.
 It is convenient to describe $D$ in terms of its Lie algebra as well. Since $D$ is one-dimensional, we have $D = \exp(\Bbb{R} Y)$, with a diagonal matrix $Y = {\rm diag}(\lambda_1,\lambda_2,\ldots,\lambda_d)$. We then have the following criterion \cite[Proposition 4.5]{Fu_RT}:
 \begin{proposition} \label{prop:char_Y}
Let $S< {\rm GL}(d,\Bbb{R})$ denote a shearing subgroup. Let $Y$ denote a nonzero diagonal matrix, and let $D := {\rm exp}(\Bbb{R}Y)$ the associated one-parameter group with infinitesimal generator $Y$. Then the following are equivalent:
\begin{enumerate}[label=(\roman*)]
\item $H = DS \cup (-DS)$ is a shearlet dilation group;
\item For all $X \in \mathfrak{s}$ we have $[X,Y] = XY-YX \in \mathfrak{s}$, and in addition the first diagonal entry of $Y$ is nonzero.
\end{enumerate}
\end{proposition}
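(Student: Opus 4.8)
The plan is to prove the equivalence by translating both conditions into statements about the factorization $H = DS \cup (-DS)$, with the key being that property (iii) of the definition of generalized shearlet dilation group requires $DS$ to be a group. First I would observe that the product set $DS$ is a group if and only if it is closed under multiplication and inversion, and since $D$ and $S$ are each subgroups, the crucial issue is whether $D$ \emph{normalizes} $S$, i.e. whether $d S d^{-1} = S$ for all $d \in D$. The whole proof therefore reduces to showing that this normalization condition is equivalent to the infinitesimal bracket condition $[X,Y] \in \mathfrak{s}$ for all $X \in \mathfrak{s}$, together with the nondegeneracy requirement that the first diagonal entry $\lambda_1$ of $Y$ be nonzero.

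Next I would handle the normalization-versus-bracket correspondence. Since $S = \exp(\mathfrak{s})$ and $D = \exp(\Bbb{R}Y)$, conjugation acts on the Lie algebra level by the adjoint action: for $d = \exp(rY)$ we have $d \exp(X) d^{-1} = \exp(\mathrm{Ad}(d)X)$ where $\mathrm{Ad}(\exp(rY))X = e^{r\,\mathrm{ad}_Y}X = \sum_{k\ge 0} \frac{r^k}{k!}(\mathrm{ad}_Y)^k X$ and $\mathrm{ad}_Y(X) = [Y,X]$. Thus $D$ normalizes $S$ if and only if $\mathrm{Ad}(\exp(rY))\mathfrak{s} \subseteq \mathfrak{s}$ for all $r$, and because $\mathfrak{s}$ is a finite-dimensional subspace, standard Lie theory (differentiating the one-parameter curve at $r=0$, and conversely re-exponentiating an invariant subspace) shows this is equivalent to $\mathrm{ad}_Y(\mathfrak{s}) \subseteq \mathfrak{s}$, that is, $[X,Y] \in \mathfrak{s}$ for every $X \in \mathfrak{s}$. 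This is the computational heart of the argument, and the one direction ($\mathrm{ad}$-invariance implies group-level invariance) is immediate from the power series while the converse follows by differentiation.

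I would then address the role of the first diagonal entry $\lambda_1$ of $Y$, which is where the admissibility hypothesis enters and which I expect to be the main obstacle. The point is that the bracket condition alone guarantees $H = DS \cup (-DS)$ is a \emph{group}, but the definition of a generalized shearlet dilation group also requires $H$ to be \emph{irreducibly admissible}, i.e. the dual action must have an open orbit of full measure with compact stabilizers. Here I would use the characterization from Subsection \ref{subsec:2.1}: admissibility is equivalent to the existence of a single open dual orbit. Since $S$ already has dimension $d-1$ and acts (via Proposition \ref{prop:char_shearsubs} and Lemma \ref{lem:char_admiss}) producing a $(d-1)$-dimensional orbit foliation transverse to the scaling direction, the scaling generator $Y$ must supply the one missing dimension needed to open up a full-measure orbit in $\Bbb{R}^d$. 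The map $r \mapsto \exp(rY)^T\xi$ contributes a genuinely new direction precisely when $Y^T$ moves $\xi$ off the $S^T$-orbit, and one checks using the canonical basis (whose elements satisfy $X_i^T e_1 = e_i$, so the orbit of $S^T$ through $e_1$ spans the directions $e_2,\ldots,e_d$) that the remaining first coordinate is scaled exactly by $\lambda_1$; hence $\lambda_1 \ne 0$ is both necessary and sufficient for the dual action to have the open orbit required by admissibility.

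Finally I would assemble the two directions. For (i) $\Rightarrow$ (ii): if $H$ is a shearlet dilation group then by property (iii) the set $DS$ is a group, forcing $D$ to normalize $S$ and hence the bracket condition; and irreducible admissibility forces $\lambda_1 \ne 0$ by the orbit-dimension count above. For (ii) $\Rightarrow$ (i): the bracket condition makes $DS$ closed under multiplication and inversion so that $H = DS \cup (-DS)$ is a closed matrix group admitting the required unique factorization $h = \pm ds$ (uniqueness following from $D \cap S = \{I_d\}$, as $D$ is diagonal and $S$ is unipotent), while $\lambda_1 \ne 0$ together with the shearing property of $S$ yields the open dual orbit with compact stabilizer, establishing irreducible admissibility. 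The main obstacle, as noted, is cleanly isolating why $\lambda_1 \ne 0$ is exactly the admissibility condition rather than an artifact, and I would lean on the explicit canonical-basis description of $S^T$-orbits to make that orbit-dimension argument rigorous.
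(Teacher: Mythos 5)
The paper itself does not prove this proposition; it is quoted from \cite[Proposition 4.5]{Fu_RT}, so there is no internal argument to measure yours against. Judged on its own merits, your proof is correct in substance and follows the route one would expect: reduce (iii) of the definition to $D$ normalizing $S$, convert that to ${\rm ad}_Y(\mathfrak{s})\subset\mathfrak{s}$ via $\exp$ and the adjoint action (legitimate here because $\exp:\mathfrak{s}\to S$ is a bijection), and identify $\lambda_1\neq 0$ with irreducible admissibility through the dual orbit.

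One step is under-justified and worth making explicit. The equivalence ``$DS$ is a group if and only if $D$ normalizes $S$'' is not a general fact about products of subgroups: the forward implication fails for arbitrary pairs of subgroups, so in the direction (i)$\Rightarrow$(ii) you must extract normalization from the specific structure. It does hold here: for $d\in D$ and $s\in S$ the conjugate $dsd^{-1}$ is unipotent upper triangular and lies in $H$, so by (iii) it equals $\pm d's'$; comparing diagonals (that of $\pm d's'$ is $\pm{\rm diag}(d')$ with all entries of $d'$ positive, that of $dsd^{-1}$ consists of ones) forces the positive sign and $d'=I_d$, hence $dsd^{-1}=s'\in S$. The same diagonal comparison delivers the uniqueness of the factorization and the disjointness of $DS$ and $-DS$ that you invoke in the converse direction, where you should also note that $H$ is closed (recover $d$ continuously from the diagonal of $ds$). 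Finally, your $\lambda_1$ analysis can be stated more crisply than an ``orbit-dimension count'': for $h=\pm\exp(rY)s$ the matrix $h^T$ is lower triangular with $(1,1)$ entry $\pm e^{r\lambda_1}$, so if $\lambda_1=0$ every dual orbit lies in a level set $\{|\xi_1|=c\}$, which has empty interior, and no open orbit can exist; if $\lambda_1\neq0$, then $h^Te_1=\pm e^{r\lambda_1}(1,t_2,\ldots,t_d)^T$ by the canonical basis relations $X_i^Te_1=e_i$, which exhibits the open, conull orbit $\Bbb{R}^\times\times\Bbb{R}^{d-1}$ with trivial (hence compact) stabilizer, exactly as in Proposition \ref{prop:open_orbit}.
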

{
\begin{remark}
The above proposition states that $H=S \rtimes \Bbb R^\times$, so
that $H$ is solvable group with two connected components, and each of them is
simply connected.  
\end{remark}
}
Since $Y$ and $rY$, for nonzero $r \in \Bbb{R}$, determine the same one-parameter subgroup, part (ii) of the proposition allows to fix $\lambda_1 = 1$. Note that part (ii) is trivially fulfilled by {\em isotropic scaling}, which corresponds to taking $1 = \lambda_1 = \lambda_2 = \ldots = \lambda_d$. In what follows, we will be particularly interested in anisotropic solutions; our interest in these groups is mainly prompted by the crucial role of anisotropic scaling for wavefront set characterization.  

It turns out that the relation $[Y, \mathfrak{s}] \subset \mathfrak{s}$ translates to a fairly transparent system of linear equations. Once again, the canonical basis $X_2,\ldots,X_d$ of $\mathfrak{s}$ proves to be particularly useful: As the following lemma shows, the adjoint action $\mathfrak{s} \ni X \mapsto [Y, X]$ maps $\mathfrak{s}$ into itself if and only if the $X_i$ are eigenvectors of that map.  The lemma uses the notation $E_{i,j}$ for the matrix having entry one at row $i$ and column $j$, and zeros everywhere else. 

\begin{lemma} \label{lem:char_diag}
Let $\mathfrak{s}$ denote the Lie algebra of a shearing subgroup, and let $X_2,\ldots, X_d$ denote the canonical basis of $\mathfrak{s}$, given by 
\begin{equation} \label{defn:struct_Xi_2}
  X_i = E_{1,i} +\sum_{j=2}^{d}  \sum_{k=j+1}^d d_{i,j,k} E_{j,k}~
 \end{equation} with suitable coefficients $d_{i,j,k}$.
 Let $Y = {\rm diag}(1,\lambda_2,\ldots,\lambda_d)$ be given. 
Then $[Y,\mathfrak{s}] \subset \mathfrak{s}$ if and only if
\[
  \mbox{for all } i=2,\ldots,d~:~ \lambda_i = 1 + \mu_i~,
\] and the vector $(\mu_2,\ldots,\mu_d)$ is a solution of the system of linear equations given by
\begin{equation} \label{eqn:char_diag}
 \mbox{for all } (i,j,k) \in \{ 2, \ldots, d \}^3 \mbox{ with } d_{i,j,k} \not= 0 ~:~ \mu_i + \mu_j = \mu_k~. 
 \end{equation}
 In particular, $(\mu_2,\ldots,\mu_d) \mapsto (1,1+\mu_2,\ldots,1+\mu_d)$ sets up a bijection between the nonzero solutions of (\ref{eqn:char_diag}) on the one hand and the anisotropic scaling subgroups $D$ compatible with $S$ on the other. 
\end{lemma}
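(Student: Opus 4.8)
The plan is to reduce everything to an explicit computation of the adjoint action ${\rm ad}_Y = [Y,\cdot]$ on the canonical basis, exploiting that $Y$ is diagonal. First I would record the elementary identity $[Y,E_{j,k}] = (\lambda_j-\lambda_k)E_{j,k}$, valid for any diagonal $Y={\rm diag}(\lambda_1,\ldots,\lambda_d)$, so that each matrix unit is an eigenvector of ${\rm ad}_Y$. Applying this termwise to the expansion (\ref{defn:struct_Xi_2}) and using $\lambda_1=1$ yields
\[
 [Y,X_i] = (1-\lambda_i)E_{1,i} + \sum_{j=2}^d\sum_{k=j+1}^d d_{i,j,k}(\lambda_j-\lambda_k)E_{j,k}.
\]
Since $[Y,\mathfrak{s}]\subset\mathfrak{s}$ is equivalent to $[Y,X_i]\in\mathfrak{s}$ for every $i$ (the $X_i$ span $\mathfrak{s}$ and the bracket is linear), the whole statement comes down to deciding when each $[Y,X_i]$ lies in $\mathfrak{s}$. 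Throughout I set $\mu_i:=\lambda_i-1$, which is merely the announced change of variables.

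The crux is a rigidity property of the canonical basis. Because each $X_\ell$ contributes exactly $E_{1,\ell}$ to its first row while the remaining terms in (\ref{defn:struct_Xi_2}) live in rows $\ge 2$, the first rows of $X_2,\ldots,X_d$ are $e_2^T,\ldots,e_d^T$. Hence the map sending $Z\in\mathfrak{s}$ to its first row is injective, and the coefficient of $X_\ell$ in any $Z=\sum_\ell c_\ell X_\ell$ equals the $(1,\ell)$ entry of $Z$. The displayed formula shows that the only nonzero first-row entry of $[Y,X_i]$ is $(1-\lambda_i)$ in position $(1,i)$; reading off coefficients, I conclude that $[Y,X_i]\in\mathfrak{s}$ forces $[Y,X_i]=(1-\lambda_i)X_i$, and conversely this eigenvector relation trivially places $[Y,X_i]$ in $\mathfrak{s}$. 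Thus $[Y,\mathfrak{s}]\subset\mathfrak{s}$ is precisely the condition that every $X_i$ be an eigenvector of ${\rm ad}_Y$ with eigenvalue $-\mu_i$. Comparing the coefficient of each $E_{j,k}$ in $[Y,X_i]=-\mu_i X_i$ gives $d_{i,j,k}(\lambda_j-\lambda_k)=-\mu_i\,d_{i,j,k}$, which for every triple with $d_{i,j,k}\neq 0$ simplifies via $\lambda_j-\lambda_k=\mu_j-\mu_k$ to exactly $\mu_i+\mu_j=\mu_k$, i.e.\ the system (\ref{eqn:char_diag}). This settles the equivalence in both directions simultaneously. I expect the main obstacle to be conceptual rather than computational: one must notice that the first-row rigidity pins down both the eigenvalue \emph{and} the fact that no other canonical basis vector can appear, so that $[Y,\mathfrak{s}]\subset\mathfrak{s}$ is forced into a simultaneous-eigenvector condition.

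For the concluding bijection I would invoke Proposition \ref{prop:char_Y}: since $Y={\rm diag}(1,\lambda_2,\ldots,\lambda_d)$ has nonzero first diagonal entry, the condition $[Y,\mathfrak{s}]\subset\mathfrak{s}$ is exactly the condition that $H=DS\cup(-DS)$ be a shearlet dilation group, so admissible generators $Y$ correspond to scaling subgroups $D$ compatible with $S$. Normalizing $\lambda_1=1$ is permissible because $Y$ and $rY$ generate the same $D$, and it makes the normalized generator unique for each $D$; hence the assignment $(\mu_2,\ldots,\mu_d)\mapsto Y\mapsto D=\exp(\Bbb{R}Y)$ is injective, with surjectivity onto the compatible $D$ supplied by the equivalence just proved. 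Finally, isotropic scaling is the case $\lambda_1=\cdots=\lambda_d=1$, that is $\mu_2=\cdots=\mu_d=0$, so the nonzero solutions of (\ref{eqn:char_diag}) match precisely the anisotropic scaling subgroups, completing the bijection.
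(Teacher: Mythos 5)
Your proposal is correct and follows essentially the same route as the paper: both compute $[Y,X_i]$ via the eigenvector identity $[Y,E_{j,k}]=(\lambda_j-\lambda_k)E_{j,k}$, use the first-row structure of the canonical basis to force $[Y,X_i]$ to be a scalar multiple of $X_i$, and then read off the linear system by comparing coefficients of the $E_{j,k}$. The only difference is cosmetic: you spell out the first-row injectivity argument and the final bijection via Proposition \ref{prop:char_Y} a bit more explicitly than the paper does.
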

\begin{remark} 
Note that (\ref{defn:struct_Xi_2}) shows that $d_{i,j,k}=(X_i)_{jk}$.
\end{remark}
\begin{proof}
 We first note that the $E_{j,k}$ are eigenvectors under the adjoint action of any diagonal matrix:
 \begin{equation} 
  [Y, E_{j,k}] = (\lambda_j - \lambda_k) E_{j,k}~. 
 \end{equation}
As a consequence, given any matrix $X$, the support of the matrix $[Y,X]$ (i.e., the set of indices of its nonzero entries) is contained in the support of $X$. 

Note that $Y$ normalizes $\mathfrak{s}$ if and only if $[Y,X_i] \in \mathfrak{s}$ for $i=2,\ldots,d$. Now the calculation  
\begin{equation} \label{eqn:comp_comm}
 [Y,X_i] =[Y,E_{1,i}] +  \sum_{(j,k)} d_{i,j,k} [Y,E_{j,k}] = (1-\lambda_i) E_{1,i} + \sum_{(j,k)} d_{i,j,k} (\lambda_{j}-\lambda_k) E_{j,k}
\end{equation} shows that the only (potentially) nonzero entry in the first row of $[Y,X_i]$ occurs at the $i$th column, hence $[Y,X_i]$ is in $\mathfrak{s}$ if and only if it is a scalar multiple of $X_i$. In view of (\ref{eqn:comp_comm}) and the linear independence of the $E_{j,k}$, this holds precisely when 
\begin{equation} \label{eqn:char_diag_2}
 \mbox{for all } (i,j,k)  \in \{ 2, \ldots, d \}^3 \mbox{ with } d_{i,j,k} \not= 0 ~:~ 1-\lambda_i = \lambda_j - \lambda_k~. 
 \end{equation}
Rewriting this system for $\mu_i = \lambda_i - 1$, for $i=2,\ldots,d$, yields (\ref{eqn:char_diag}).
\end{proof}

Finally, let us return to properties of the associated shearlet transforms. 
In view of the central role of the dual action, it is important to compute the associated open dual orbit. Here we have the following, see \cite[Proposition 4.5]{Fu_RT}:
\begin{proposition} \label{prop:open_orbit}
 Let $S$ be a shearing subgroup, and $D$ any diagonal complement of $S$. Then $H = DS \cup  -DS$ acts freely on the unique open dual orbit given by $\mathcal{O} = \Bbb{R}^\times \times \Bbb{R}^{d-1}$. 
\end{proposition}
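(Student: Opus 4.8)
The plan is to avoid any general-position argument and instead to identify the orbit explicitly by evaluating the dual action on the single convenient base point $\xi_0 = e_1$. The key observation I would start from is that for $h \in S$ the vector $h^T e_1$ equals the first column of $h^T$, i.e. the transpose of the first row of $h$. By Lemma \ref{lem:desc_sg}(b) this first row is $(1, h_{1,2}, \ldots, h_{1,d})$, and the off-diagonal entries $h_{1,2}, \ldots, h_{1,d}$ parametrize $S$ bijectively by elements of $\Bbb{R}^{d-1}$. Hence $h \mapsto h^T e_1$ is a bijection of $S$ onto $\{1\} \times \Bbb{R}^{d-1}$, and in particular the higher rows of the $X_i$ play no role whatsoever in this computation.

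Next I would fold in the scaling group. Normalizing $Y = {\rm diag}(1, \lambda_2, \ldots, \lambda_d)$ as permitted by Proposition \ref{prop:char_Y}, every $d = \exp(rY) \in D$ is diagonal with $d e_1 = e^r e_1$. Writing $h = ds$ gives $h^T = s^T d$, so $h^T e_1 = e^r\, s^T e_1 = e^r (1, s_{1,2}, \ldots, s_{1,d})^T$. As $r$ ranges over $\Bbb{R}$ the first entry $e^r$ sweeps out $(0,\infty)$, while for each fixed $r$ the remaining entries already sweep out all of $\Bbb{R}^{d-1}$. Throwing in the component $-DS$ flips the sign of the first entry, so I obtain $H^T e_1 = \Bbb{R}^\times \times \Bbb{R}^{d-1} = \mathcal{O}$, which proves transitivity of $H$ on $\mathcal{O}$. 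Since $\mathcal{O}$ is open and its complement $\{0\} \times \Bbb{R}^{d-1}$ has empty interior, every open orbit must meet $\mathcal{O}$ and hence coincide with it, so $\mathcal{O}$ is the unique open dual orbit.

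For freeness it suffices, by transitivity and the conjugacy of stabilizers along an orbit, to check that the stabilizer of $e_1$ is trivial. Writing a stabilizing element as $h = \pm ds$ and using $h^T e_1 = \pm e^r (1, s_{1,2}, \ldots, s_{1,d})^T$, the equation $h^T e_1 = e_1$ forces $\pm e^r = 1$; since $e^r > 0$ this selects the $+$ component and $r = 0$, whence $d = I_d$. The remaining coordinates then yield $s_{1,i} = 0$ for all $i$, so $s = I_d$ by the bijectivity in Lemma \ref{lem:desc_sg}(b), and the stabilizer reduces to $\{I_d\}$.

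The whole argument is short bookkeeping once the identity ``$h^T e_1$ is the scaled first row of $h$'' is in place; there is really no hard analytic step. The only places I would be careful are confirming that, for each fixed scaling factor $e^r$, the tail $e^r(s_{1,2}, \ldots, s_{1,d})$ already exhausts $\Bbb{R}^{d-1}$ (so that no interaction between $D$ and the higher rows of $S$ is needed to reach all of $\mathcal{O}$), and correctly handling the two connected components $\pm DS$ both when pinning down the first coordinate of $\mathcal{O}$ and when ruling out nontrivial stabilizers.
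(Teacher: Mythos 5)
Your argument is correct. Note first that the paper does not actually prove this proposition: it is quoted from \cite[Proposition 4.5]{Fu_RT}, so there is no in-text proof to compare against. That said, your route is exactly the one the paper's own machinery is set up for, and it is essentially the computation the paper performs later anyway: in the proof of the cone approximation property, the dual action is written out via the chart $h(t,a)$ and the block form $I_d+\sum_i t_i X_i^T=\bigl(\begin{smallmatrix}1 & 0^T\\ t & I_{d-1}+A(t)^T\end{smallmatrix}\bigr)$, whose first column is precisely your identity ``$h^T e_1$ is the (scaled) first row of $h$.'' All the ingredients you invoke are justified by the paper: the bijection $S\ni h\mapsto(h_{1,2},\ldots,h_{1,d})\in\Bbb{R}^{d-1}$ is Lemma \ref{lem:desc_sg}(b) together with the fact that the canonical basis elements $X_i$ have first row $e_i^T$ (Lemma \ref{lem:char_admiss}), and the normalization $Y={\rm diag}(1,\lambda_2,\ldots,\lambda_d)$ with nonzero first entry is Proposition \ref{prop:char_Y}. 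The two points you flag as needing care are handled correctly: for fixed $r$ the tail $e^r(s_{1,2},\ldots,s_{1,d})$ already exhausts $\Bbb{R}^{d-1}$ since $e^r\neq0$, and on the $-DS$ component the whole vector is negated but the tail still sweeps out $\Bbb{R}^{d-1}$, so the image is all of $\Bbb{R}^\times\times\Bbb{R}^{d-1}$. The uniqueness argument (any other open orbit would be contained in $\{0\}\times\Bbb{R}^{d-1}$, which has empty interior) and the freeness argument via the trivial stabilizer of $e_1$ and conjugacy of stabilizers along the orbit are both sound. No gaps.
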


Note that the dual orbit is the same for all shearing groups. Somewhat surprisingly, the same can be said of the admissibility condition \cite[Theorem 4.12]{Fu_RT}:
\begin{theorem} \label{thm: shear_adm_cond}
Let $H < {\rm GL}(\Bbb{R}^d)$ denote a generalized shearlet dilation group. Then $\psi \in {\rm L}^2(\Bbb{R}^d)$ is admissible iff
\[
 \int_{\Bbb{R}^d} \frac{|\widehat{\psi}(\xi)|^2}{|\xi_1|^d} d \xi < \infty~. 
\]
\end{theorem}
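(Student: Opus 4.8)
The plan is to reduce the admissibility of $\psi$ to a pointwise condition on a single orbit integral, and then to evaluate that integral explicitly by exploiting the triangular structure of $H$ together with the canonical basis of $\mathfrak{s}$.

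First I would pass to the Fourier side. Writing $\widehat{\pi(x,h)\psi}(\xi) = |\det h|^{1/2} e^{-2\pi i \langle x,\xi\rangle}\widehat\psi(h^T\xi)$ from \eqref{eqn:def_quasireg}, a Plancherel computation in the translation variable $x$ gives $\int_{\Bbb{R}^d}|W_\psi f(x,h)|^2\,dx = |\det h|\int_{\Bbb{R}^d}|\widehat f(\xi)|^2|\widehat\psi(h^T\xi)|^2\,d\xi$. Integrating against $d\mu_G = |\det h|^{-1}\,dx\,dh$ and applying Tonelli, the determinant factors cancel and one obtains $\|W_\psi f\|_{L^2(G)}^2 = \int_{\Bbb{R}^d}|\widehat f(\xi)|^2\,\Phi(\xi)\,d\xi$, where $\Phi(\xi) = \int_H |\widehat\psi(h^T\xi)|^2\,dh$ and $dh$ is the left Haar measure of $H$. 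Since $\widehat f$ ranges over all of $L^2(\Bbb{R}^d)$, the vector $\psi$ is admissible precisely when $\Phi$ equals a finite positive constant almost everywhere. The key preliminary observation is that $\Phi$ is automatically constant on the open dual orbit $\mathcal O = \Bbb{R}^\times\times\Bbb{R}^{d-1}$ of Proposition~\ref{prop:open_orbit}: for $g\in H$ the substitution $h\mapsto gh$ together with left-invariance of $dh$ yields $\Phi(g^T\xi_0) = \Phi(\xi_0)$, and $H$ acts transitively on $\mathcal O$. As $\mathcal O$ has full measure, admissibility of a nonzero $\psi$ is equivalent to the single requirement $\Phi(e_1) < \infty$, and it remains only to compute $\Phi(e_1)$.

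To evaluate $\Phi(e_1)$ I would introduce global coordinates $(r,t)\mapsto h = \exp(rY)\,s(t)$ on the identity component $H^+ = DS$, where $s(t) = I_d + \sum_{i=2}^d t_i X_i$ parametrizes $S$ via Lemma~\ref{lem:desc_sg} and $Y = {\rm diag}(1,\lambda_2,\ldots,\lambda_d)$. Two computations drive the argument. On the one hand, using $X_i^T e_1 = e_i$ one finds $h^T e_1 = a\,(1,t_2,\ldots,t_d)^T$ with $a = e^r$, so the orbit map $(r,t)\mapsto\eta = h^Te_1$ sends $H^+$ diffeomorphically onto $\{\eta_1>0\}$ with Jacobian determinant $a^d = |\eta_1|^d$. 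On the other hand, the group law in these coordinates reads $(r_1,t_1)(r_2,t_2) = (r_1+r_2,\,\Lambda(r_2)t_1 + t_2)$, where $\Lambda(r_2) = {\rm diag}(e^{r_2(\lambda_i-1)})_{i=2}^{d}$ is the matrix of ${\rm Ad}(\exp(-r_2 Y))$ on $\mathfrak{s}$ in the canonical basis, its eigenvalues being those read off from $[Y,X_i]=(1-\lambda_i)X_i$ in Lemma~\ref{lem:char_diag}. Hence the differential of every left translation is unipotent and the left Haar measure is simply $dh = dr\,dt$. Combining the two, the change of variables $\eta = h^T e_1$ gives $dr\,dt = |\eta_1|^{-d}\,d\eta$ and therefore $\int_{H^+}|\widehat\psi(h^Te_1)|^2\,dh = \int_{\eta_1>0}|\widehat\psi(\eta)|^2|\eta_1|^{-d}\,d\eta$. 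Running the identical computation on the component $-H^+$, where $(-h)^Te_1$ sweeps out $\{\eta_1<0\}$, accounts for the remaining half-space, and summing yields $\Phi(e_1) = \int_{\Bbb{R}^d}|\widehat\psi(\eta)|^2|\eta_1|^{-d}\,d\eta$, which is exactly the asserted quantity.

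The step I expect to be the main obstacle is the determination of the left Haar measure, that is, verifying that it carries no $\lambda$-dependent density in the $(r,t)$ coordinates; this is what ultimately makes the admissibility condition independent of the scaling exponents $\lambda_2,\ldots,\lambda_d$. The cleanest route is the group-law computation above, whose only input is that $D$ normalizes $S$ and acts on each canonical generator $X_i$ by the scalar $e^{r(1-\lambda_i)}$, so that the $\lambda$-dependent factor $\Lambda(r_2)$ sits strictly below the diagonal of the differential and drops out of the determinant. One must also settle the two bookkeeping points that the factorization $h = \pm\exp(rY)s(t)$ is a genuine global chart on each connected component (guaranteed by the uniqueness in the definition of a generalized shearlet dilation group together with the fact that $\exp$ is a diffeomorphism on $\mathfrak{t}(d,\Bbb{R})$), and that the free, transitive action on $\mathcal O$ from Proposition~\ref{prop:open_orbit} legitimizes the change of variables as a measure isomorphism onto a full-measure set.
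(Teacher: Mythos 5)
The paper does not actually prove Theorem~\ref{thm: shear_adm_cond}; it is quoted from \cite{Fu_RT}, so your argument cannot be compared line by line against one in the text. Your overall strategy---reduce admissibility to finiteness of the Calder\'on integrand $\Phi(\xi)=\int_H|\widehat\psi(h^T\xi)|^2\,dh$, observe that $\Phi$ is constant on the open orbit by left invariance of $dh$, and then evaluate $\Phi(e_1)$ via the change of variables $\eta=h^Te_1$---is the standard and correct one, and the individual computations (the Plancherel step, the orbit map $(r,t)\mapsto e^r(1,t^T)^T$ with Jacobian $|\eta_1|^d$, the treatment of the two connected components) all check out.

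There is, however, one incorrect intermediate claim, located precisely at the step you yourself single out as the main obstacle. In the chart $h=\exp(rY)s(t)$ with $s(t)=I_d+\sum_{i\ge 2}t_iX_i$, the group law is \emph{not} $(r_1,t_1)(r_2,t_2)=(r_1+r_2,\Lambda(r_2)t_1+t_2)$: writing $X_u=\sum_i u_iX_i$, one has $s(u)s(v)=I_d+X_u+X_v+X_uX_v$, and the product $X_uX_v\in\mathfrak{s}$ vanishes only for nilpotency class $2$ (for the Toeplitz group of Example~\ref{ex:toeplitz} one already has $X_2X_2=X_3\neq 0$). The correct law carries an extra bilinear term, $(r_1,t_1)(r_2,t_2)=\bigl(r_1+r_2,\;\Lambda(r_2)t_1+t_2+B(\Lambda(r_2)t_1,t_2)\bigr)$, where $X_uX_v=X_{B(u,v)}$; equivalently, the map $s(t)\mapsto t$ is a global chart but not a homomorphism onto $(\Bbb{R}^{d-1},+)$ (it would be in exponential coordinates, but those are not the coordinates in which $h^Te_1=e^r(1,t^T)^T$). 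Your conclusion that the left Haar measure is $dr\,dt$ nevertheless survives: for fixed $u$ the map $t\mapsto B(u,t)$ is multiplication by the nilpotent element $X_u$ in the associative algebra $\mathfrak{s}$, hence a nilpotent operator, so the differential of left translation is block lower triangular with diagonal blocks $1$ and the unipotent matrix $I_{d-1}+B(\Lambda(r_2)t_1,\cdot)$, and its determinant is still one. With that one-line repair the argument is complete and yields exactly the stated condition.
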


\section{A construction method for shearlet dilation groups}
\label{sec:3_Consruct}
\subsection{Constructing shearing subgroups}
\label{subsect:const_sg}
In this subsection we want to describe a general method for the systematic construction of shearing subgroups. Recall that given a shearing subgroup $S$ with Lie algebra $\mathfrak{s}$, taking the Lie algebra $\mathfrak{a} = \Bbb{R} I_d \oplus \mathfrak{s}$ and its associated closed matrix group $A$ results in an abelian irreducibly admissible matrix group. Following \cite{Fu98}, this entails that $\mathfrak{a}$ is an associative matrix algebra. Furthermore, note that $\mathfrak{s}$ consists of strictly upper triangular matrices, which entails that any product of $d$ elements of $\mathfrak{s}$ vanishes. 

These features of $\mathfrak{a}$ can be described in general algebraic terms. Given a finite-dimensional, associative commutative algebra $\mathcal{A}$, we call an element $a \in \mathcal{A}$ {\em nilpotent} if there exists $n \in \Bbb{N}$ such that $a^n = 0$. The set of all nilpotent elements in $\mathcal{A}$ is called the {\em nilradical} of $\mathcal{A}$, denoted by $\mathcal{N}$. We call $\mathcal{A}$ nilpotent if every element of $\mathcal{A}$ is nilpotent. $\mathcal{N}$ is
an {\em ideal} in $\mathcal{A}$, i.e., given $a \in \mathcal{N}$ and an arbitrary $b \in \mathcal{A}$, one has $(ab)^n = a^n b^n =0$ for sufficiently large $n$, i.e. $ab$ is again in the nilradical.
We call the algebra $\mathcal{A}$ {\em irreducible (over $\Bbb{R}$)} if it has a unit element $1_{\mathcal{A}}$ satisfying $1_{\mathcal{A}} b = b$ for all $b \in \mathcal{A}$, and such that $\mathcal{A} = \Bbb{R} \cdot 1_{\mathcal{A}} \oplus \mathcal{N}$ holds. Note that $\mathcal{N}$ determines $\mathcal{A}$ in this case, and we will freely switch between $\mathcal{A}$ and $\mathcal{N}$ in the following.

Now the above considerations show that $\mathfrak{a}$ is an irreducible associative commutative algebra. In the remainder of this subsection, we will be concerned with a converse to this statement, i.e., with the construction of shearing subgroups from an abstractly given irreducible associative algebra. 
 Assume that $\mathcal{A}$ is an irreducible commutative associative algebra of dimension $d$, and denote  its nilradical by $\mathcal{N}$. 
 We let 
\[
 n(\mathcal{A}) = \min \{ k \in \Bbb{N}:  a^k = 0, \; \forall a \in \mathcal{N} \}~,
\] which is called the {\em nilpotency class} of $\mathcal{A}$. 
 Letting 
 \[
  \mathcal{N}^k = \{ a_1 \ldots a_k : a_i \in \mathcal{N} \}, 
 \] for $k\ge 1$, and $\mathcal{N}^{0} = \mathcal{N}$, one can prove that 
 \[
  n(\mathcal{A}) = \min \{ k \in \Bbb{N}:  \mathcal{N}^k = \{ 0 \} \} \le d~.
 \]
By definition of the nilpotency class, we obtain that $\mathcal{N}^{n(\mathcal{A})-1} \not= \{ 0 \}$, and for all $a \in \mathcal{N}^{n(\mathcal{A})-1}$ and $b \in \mathcal{N}$, it follows that $ab = 0$. 

Hence, choosing a nonzero $a_d \in \mathcal{N}^{n(\mathcal{A})-1} $, we find that $\mathcal{I}_d := \Bbb{R} \cdot a_d$ is an ideal in $\mathcal{N}$; in fact, we get $\mathcal{N} \mathcal{I}_d = \{ 0 \}$. Applying the same reasoning to the algebra $\mathcal{N}/\mathcal{I}_d$ (and choosing any representative modulo $\mathcal{I}_d$) produces a second element $a_{d-1}$ with the property that $\mathcal{I}_{d-1} = {\rm span}(a_{d-1},a_d)$ fulfills $\mathcal{N} \mathcal{I}_{d-1} \subset \mathcal{I}_d$. Further repetitions of this argument finally yield a basis
$a_2,\ldots, a_d$ of $\mathcal{N}$, that we supplement by 
$
 a_1 = 1_{\mathcal{A}} 
$ to obtain a basis of $\mathcal{A}$ with the property 
\begin{equation} \label{eqn:ideals_proper}
 \mathcal{N} \mathcal{I}_k \subset \mathcal{I}_{k+1} \mbox{ for } 1 \le k < d~,
\end{equation} and $\mathcal{I}_2 = \mathcal{N}$. We call a basis
$a_2,\ldots,a_d$ of $\mathcal{N}$ satisfying condition
(\ref{eqn:ideals_proper}) a {\em Jordan-H\"older basis} of
$\mathcal{N}$. 

The existence of a Jordan-H\"older basis can be also proved by referring to a general result about nilpotent representations of nilpotent algebras, see Theorem 3.5.3 of \cite{raja}. Indeed, regard $\mathcal N$ as nilpotent algebra and $\mathcal A$ as a vector space. It is easy to check that  the (regular) representation $\rho$ of the Lie algebra  $\mathcal N$ acting on $\mathcal A$ as $\rho(a)b=a b$ is nilpotent, so that there exists a basis $\{a_1,\ldots,a_d\}$ of $\mathcal A$ such that for each $a\in\mathcal N$, the endomorphism $\rho(a)$ is represented by a strictly upper triangular matrix $\Psi(a)\in \mathfrak{gl}(d,\Bbb{R})$ according to the canonical isomorphism 
\[
\rho(a) a_j=\sum_{k=1}^d \Psi(a)_{j,k}\, a_k \qquad j=1,\ldots,d\ .
\]
Since $\rho(a)1_{\mathcal A}=a$, it is always possible to choose $a_1=1_{\mathcal A}$ and, by construction, for all $a\in\mathcal N$ and for $i=1,\ldots,d-1$ 
\[\rho(a){\rm span}\{a_{i},\ldots,a_d\}\subset
{\rm span}\{a_{i+1},\ldots,a_d\} \qquad \rho(a)a_d=0\ . \] 

These bases provide accesss to an explicit construction of an associated shearing subgroup, explained in detail in the next lemma. Recall the notation $E_{i,j}$ for the matrix possessing entry one in row $i$, column $j$, and zeros elsewhere. Note that the map $\Psi: \mathcal{A} \to \mathfrak{gl}(d,\Bbb{R})$ in the following lemma coincides with the identically denoted map that we just introduced. 

\begin{lemma} \label{lem:const_admis}
 Let $\mathcal{A}$ denote an irreducible commutative associative algebra of dimension $d$ with nilradical $\mathcal{N}$ possessing the Jordan-H\"older basis 
 \[
a_2,\ldots, a_d \in \mathcal{A} ~.
 \] Let $a_1 = 1_{\mathcal{A}}$, and let $\psi: \Bbb{R}^d \to \mathcal{A}$ denote the induced linear isomorphism
 \[
  \psi ( (x_1,\ldots,x_d)^T) = \sum_{i=1}^d x_i a_i~. 
 \] Let $\Psi: \mathcal{A} \to \mathfrak{gl}(d,\Bbb{R})$ denote the associated linear map satisfying for all $\tilde{a} \in \mathcal{A}$ and for all $x \in \Bbb{R}^d$:
 \[
 \psi^{-1}(\tilde{a} \cdot \psi(x)) = \Psi(\tilde{a}) \cdot x~.  
 \]
 \begin{enumerate}[label=(\alph*)]
 \item The set  
\[
 S = \{ I_d + \Psi(a)^T : a \in \mathcal{N} \}
\] is a shearing subgroup, with associated Lie algebra given by
\[
 \mathfrak{s} = \{ \Psi(a)^T : a \in \mathcal{N} \}~. 
\]
\item Defining $X_i = \Psi(a_i)^T$, for $i=1,\ldots,d$, we get that $X_1$ is the identity matrix, and $X_2,\ldots,X_d$ is the canonical basis of $\mathfrak{s}$ in the sense of Lemma  \ref{lem:char_admiss}. 
\item Let $(d_{i,j,k})_{1 \le i,j,k \le d}$ denote the structure constants associated to the basis, defined by the equations
\begin{equation} \label{eqn:defn_rel}
\mbox{for all } 1 \le i, j \le d~:~a_i a_j = \sum_{k=1}^d d_{i,j,k} a_k~.
\end{equation}
Then 
\begin{equation} \label{eqn:Xi_sc}
 X_i = \left( \begin{array}{cccc} d_{i,1,1} & d_{i,1,2} & \ldots & d_{i,1,d} \\ \vdots & \vdots & \vdots & \vdots \\ d_{i,d,1} & d_{i,d,2} & \ldots & d_{i,d,d} \end{array} \right)~.
\end{equation}
 \item We note the following nontrivial properties of the $d_{i,j,k}$, valid for all $1\le i,j,k \le d$: 
 \[ d_{i,j,k} = d_{j,i,k}~, ~d_{1,j,k} = \delta_{j,k}~,~ d_{i,j,k} = 0 \mbox{ whenever } k \le \max(i,j) ~.\] 
 In particular, we get for $2 \le j \le d$
 \begin{equation} \label{defn:struct_Xi}
  X_i = E_{1,i} +\sum_{j=2}^{d}  \sum_{k=j+1}^d d_{i,j,k} E_{j,k}~.
 \end{equation}
  \end{enumerate}
\end{lemma}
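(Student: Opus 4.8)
The plan is to establish the four parts in the order (c), (d), (b), (a): parts (c) and (d) are essentially an unwinding of the definition of $\Psi$ together with the combinatorics of the Jordan--H\"older basis, and once they are in hand, (b) and (a) follow by invoking the earlier lemmas. First I would record that $\Psi(a_i)$ is, by its defining relation $\psi^{-1}(a_i\cdot\psi(x))=\Psi(a_i)x$, nothing but the matrix of left multiplication by $a_i$ in the basis $a_1,\dots,a_d$: taking $x=e_j$ gives $\Psi(a_i)e_j=\psi^{-1}(a_ia_j)$, so the $j$th column of $\Psi(a_i)$ is $(d_{i,j,1},\dots,d_{i,j,d})^T$, i.e. $(\Psi(a_i))_{k,j}=d_{i,j,k}$. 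Transposing yields $(X_i)_{j,k}=d_{i,j,k}$, which is exactly (\ref{eqn:Xi_sc}) and settles (c).

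For (d) the first two identities are immediate: commutativity $a_ia_j=a_ja_i$ forces $d_{i,j,k}=d_{j,i,k}$, and $a_1=1_{\mathcal A}$ gives $a_1a_j=a_j$, hence $d_{1,j,k}=\delta_{j,k}$. The substantive point---and the main obstacle---is the vanishing $d_{i,j,k}=0$ for $k\le\max(i,j)$, which is where the Jordan--H\"older property enters. Writing $\mathcal I_j=\mathrm{span}\{a_j,\dots,a_d\}$, the defining filtration gives $\mathcal N\,\mathcal I_j\subseteq\mathcal I_{j+1}$, so for $i,j\ge 2$ we have $a_i\in\mathcal N$ and $a_j\in\mathcal I_j$, whence $a_ia_j\in\mathcal N\,\mathcal I_j\subseteq\mathcal I_{j+1}=\mathrm{span}\{a_{j+1},\dots,a_d\}$; this says precisely $d_{i,j,k}=0$ for $k\le j$. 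Combining with the symmetry $d_{i,j,k}=d_{j,i,k}$ gives the vanishing also for $k\le i$, hence for $k\le\max(i,j)$ (the cases $i=1$ or $j=1$ being the content of the $\delta$-identity). Reading (\ref{eqn:Xi_sc}) off under these constraints---row $1$ contributes only $d_{i,1,k}=\delta_{i,k}$, i.e. the term $E_{1,i}$, while for $j\ge 2$ the entries $d_{i,j,k}$ survive only when $k>j$---then produces (\ref{defn:struct_Xi}).

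Part (b) is quick. Since $a_1=1_{\mathcal A}$, left multiplication by $a_1$ is the identity endomorphism, so $\Psi(a_1)=I_d$ and $X_1=I_d$. For $i\ge 2$ one computes $X_i^Te_1=\Psi(a_i)e_1=\psi^{-1}(a_ia_1)=\psi^{-1}(a_i)=e_i$, which is exactly the normalization defining the canonical basis in Lemma \ref{lem:char_admiss}(ii). Moreover $\Psi$ is injective (if $\Psi(a)=0$ then $a=a\cdot 1_{\mathcal A}=0$), so $X_2,\dots,X_d$ are linearly independent because $a_2,\dots,a_d$ are, and thus form a basis of $\mathfrak s$ with $X_i^Te_1=e_i$.

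Finally, for (a) I would verify that $\mathfrak s=\{\Psi(a)^T:a\in\mathcal N\}$ is an abelian associative algebra of strictly upper triangular matrices of dimension $d-1$, and then reuse the argument of Lemma \ref{lem:desc_sg}(a). Linearity and injectivity of $a\mapsto\Psi(a)^T$ give $\dim\mathfrak s=\dim\mathcal N=d-1$; (d) together with (\ref{defn:struct_Xi}) shows every element is strictly upper triangular; and since $\Psi$ is an algebra homomorphism and $\mathcal N$ is an ideal closed under products, $\Psi(a)^T\Psi(b)^T=\Psi(ab)^T\in\mathfrak s$ for $a,b\in\mathcal N$, with commutativity of $\mathcal A$ forcing the bracket to vanish. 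As in the proof of Lemma \ref{lem:desc_sg}(a), $S=\{I_d+X:X\in\mathfrak s\}$ is then closed under products, since $(I_d+X)(I_d+Y)=I_d+(X+Y+XY)$ with $X+Y+XY\in\mathfrak s$, and under inversion via a Neumann series terminating after at most $d$ terms; hence $S$ is a closed, connected, simply connected abelian subgroup of $T(d,\Bbb{R})$ with Lie algebra $\mathfrak s$. Having exhibited a basis $X_2,\dots,X_d$ of $\mathfrak s$ with $X_i^Te_1=e_i$, Lemma \ref{lem:char_admiss}(ii)$\Rightarrow$(i) shows that $S$ is a shearing subgroup and that $X_2,\dots,X_d$ is its canonical basis, completing (a) and (b).
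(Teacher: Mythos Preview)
Your proof is correct and follows essentially the same approach as the paper: both establish (c) by computing $\Psi(a_i)e_j$ and transposing, derive (d) from commutativity, the unit property, and the Jordan--H\"older filtration, and then combine the algebra-homomorphism property of $\Psi$ with Lemmas~\ref{lem:char_admiss} and~\ref{lem:desc_sg} to obtain (a) and (b). The only cosmetic difference is that the paper first invokes Lemma~\ref{lem:char_admiss} to identify the Lie group of $\mathfrak{s}$ as a shearing subgroup and then applies Lemma~\ref{lem:desc_sg}(a) to conclude it equals $I_d+\mathfrak{s}$, whereas you reproduce the argument of Lemma~\ref{lem:desc_sg}(a) directly on $I_d+\mathfrak{s}$ before invoking Lemma~\ref{lem:char_admiss}; the content is the same.
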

\begin{proof}
 We start with part (c). Since multiplication with $a_1 = 1_{\mathcal{A}}$ is the identity operator, the statement about $X_1$ is clear. 
 Let $1 \le i,j \le d$. By definition of $\psi$, we have $\psi(e_j) = a_j$, and hence by definition of $\Psi$
 \begin{eqnarray*}
  \Psi(a_i) e_j & = &  \psi^{-1}(a_i \cdot \psi(e_j))  = \psi^{-1}(a_i a_j) \\
   & = & \psi^{-1} \left(\sum_{k=1}^d d_{i,j,k} a_k  \right) = \sum_{k=1}^d d_{i,j,k} e_k~. 
 \end{eqnarray*}
Hence the $j$th column of $\Psi(a_i)$ is the vector $(d_{i,j,1},\ldots,d_{i,j,d})^T$, and its transpose is the $j$th row of $\Psi(a_i)^T$. This shows 
 (\ref{eqn:Xi_sc}). 
 
 Now, with (c) established, the equation
 \[
  a_1 a_j = a_j 
 \] for $i=2,\ldots,d$, yields that $d_{1,j,k} = \delta_{j,k}$, which also takes care of part (b). Furthermore, the fact that $a_i a_j = a_j a_i$ ensures that $d_{i,j,k} = d_{j,i,k}$. 
 Finally, recall that $\mathcal{A} \mathcal{I}_{i} \subset \mathcal{I}_{i+1}$ by (\ref{eqn:ideals_proper}), which entails $a_i a_j \in \mathcal{I}_{i+1}$, and thus $d_{i,j,k} = 0$ whenever $k\le i$.
 Since $d_{i,j,k} = d_{j,i,k}$, we then obtain more generally that $k \le \max(i,j)$ entails $d_{i,j,k} = 0$.  Now equation (\ref{defn:struct_Xi}) is clear, and (d) is shown.
 
 In order to prove (a), we first note that $\Psi$ is a homomorphism of associative algebras, hence $\mathfrak{s} = \Psi(\mathcal{N})^T$ is a commutative associative matrix algebra. In particular, it is also an abelian Lie-subalgebra. Furthermore, the relation $d_{i,j,k} = 0$ whenever $k \le \max(i,j)$ ensures that the basis $X_2,\ldots,X_d$ consists of strictly upper triangular matrices.  In addition, 
 $d_{1,j,k} = \delta_{j,k}$ entails that $X_2,\ldots,X_d$ is indeed a canonical basis, and thus Lemma \ref{lem:char_admiss} gives that the associated Lie group is a shearing subgroup. Now part (a) of Lemma \ref{lem:desc_sg} yields (a) of the current lemma, and (b) is also shown.
 \end{proof}

\begin{remark}
 It is natural to ask whether the construction of shearing subgroups $S$ from irreducible commutative associative algebras $\mathcal{A}$, as described in Lemma \ref{lem:const_admis}, is exhaustive. The answer is  yes. To see this, consider the Lie algebra $\mathfrak{s}$ of a shearing subgroup $S$. Let $X_2,\ldots,X_d$ be the canonical basis of $\mathfrak{s}$. Since $X_j$ is strictly upper triangular, and the first row of $X_i$ equals $e_i^T$, it follows that the first $i$ entries of the first row of $X_i X_j$ vanish. This product is again in the span of the $X_k$, hence 
 \[
  X_i X_j = \sum_{k > i} d_{i,j,k} X_k~,
 \] with suitable coefficients $d_{i,j,k}$. But the fact that the sum on the right-hand side starts with $k=i+1$ shows that the basis $X_2,\ldots,X_d$ is a Jordan-H\"older basis of the nilpotent associative matrix algebra $\mathfrak{s}$. If one now applies the procedure from Lemma \ref{lem:const_admis} (with $a_i = X_i$), direct calculation allows to verify that $\Psi(X)^T = X $ for all $X \in \mathfrak{s}$. 
Hence every shearing subgroup arises from the construction in Lemma \ref{lem:const_admis}.

In particular, the observations concerning the structure constants $d_{i,j,k}$ made in part (d) of Lemma \ref{lem:const_admis} also apply to the $d_{i,j,k}$ in Lemma \ref{lem:char_diag}.
\end{remark}

\begin{remark}
 A further benefit of the above construction of shearing groups via associative algebras is that it settles the question of conjugacy as a byproduct. By Theorem 13 in \cite{Fu98} and the remarks prior to that result, one sees that two shearing subgroups $S_1$ and $S_2$ are conjugate iff their Lie algebras are isomorphic as associative algebras. 
 
 In particular, following the observation made in \cite[Theorem 15]{Fu98}, in dimension $d \ge 7$ there exist uncountably many nonconjugate shearing subgroups. 
\end{remark}
\subsection{An inductive approach to shearlet dilation groups}
\label{subsect:inappr_sheardil}

For possible use in inductive proof strategies, we note a further consequence of the block structure: 
\begin{proposition}
 Let $H = \pm DS< Gl(d,\Bbb{R})$ denote a shearlet dilation group, with $d \ge 3$, and let 
 \[
 H_1 = \left\{ h' \in {\rm GL}(d-1,\Bbb{R}): \exists\, h \in H, z \in \Bbb{R}^{d-1}, s\in\Bbb{R}\backslash\left\{0\right\} \mbox{ with }
  h = \left( \begin{array}{cc} h' & z \\ 0 & s \end{array} \right) ~\right\}~. 
 \]
Then $H_1$ is a shearlet dilation group as well. 

Conversely, the elements of $H$ can be described in terms of $H_1$ as follows: There exists a map $y: H_1 \to \Bbb{R}^{d-1}$ such that we can write each $h \in H$ uniquely as 
\[
 h(h_1,r) = \left( \begin{array}{ccccc} & & & & r \\ & & & & y_1(h_1) \\ & & h_1 & & y_2(h_1) \\ & & & &  \vdots \\ &  & & &  y_{d-2}(h_1) \\ 
 0 & \ldots & \ldots & 0 & y_{d-1}(h_1) \end{array} \right) ~, 
\]
with $ h_1 \in H_1, r \in \Bbb{R}$.
\end{proposition}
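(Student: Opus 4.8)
The plan is to exploit the block-upper-triangular shape of $H$ and to realize $H_1$, together with its shearing and scaling subgroups, as the ``top-left corners'' of $S$ and $D$. Every $h \in H$ admits the unique factorization $h = \varepsilon\, \delta\, \sigma$ with $\varepsilon \in \{\pm 1\}$, $\delta = {\rm diag}(a,a^{\lambda_2},\ldots,a^{\lambda_d})$ for some $a>0$, and $\sigma \in S$ guaranteed by the definition of a shearlet dilation group; since $\delta$ is diagonal and $\sigma$ is unipotent, $h$ is (up to sign) upper triangular and hence has the block form required in the statement relative to the splitting $\Bbb{R}^d = \Bbb{R}^{d-1}\oplus \Bbb{R}$. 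Decomposing each factor into the same block form, a direct $2\times 2$ block multiplication gives $h' = \varepsilon\, \delta'\, \sigma'$, where $\delta' = {\rm diag}(a,\ldots,a^{\lambda_{d-1}})$ and $\sigma'$ is the top-left $(d-1)\times(d-1)$ block of $\sigma$. As $h \mapsto h'$ is multiplicative on block-triangular matrices, it is a group homomorphism, so $H_1 = \pm D'S'$ with $D' = \exp(\Bbb{R}Y')$, $Y' = {\rm diag}(1,\lambda_2,\ldots,\lambda_{d-1})$, and $S'$ the image of $S$ under this projection; in particular $S'$ is a connected abelian subgroup of $T(d-1,\Bbb{R})$.

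First I would verify that $S'$ is a shearing subgroup in ${\rm GL}(d-1,\Bbb{R})$ via Lemma \ref{lem:char_admiss}. Its Lie algebra $\mathfrak{s}'$ is the image of $\mathfrak{s}$ under the block projection. Applying this projection to the canonical basis $X_2,\ldots,X_d$ and using $X_d = E_{1,d}$ (which follows from $d_{d,j,k}=0$ for all $k$ in Lemma \ref{lem:const_admis}(d)), the projected matrices $X_2',\ldots,X_{d-1}'$ remain linearly independent with first rows $e_2^T,\ldots,e_{d-1}^T$, while $X_d'$ vanishes. Hence $\dim \mathfrak{s}' = d-2$ and $X_2',\ldots,X_{d-1}'$ is a canonical basis, so Lemma \ref{lem:char_admiss} identifies $S'$ as a shearing subgroup. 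To see that $Y'$ is a compatible scaling generator I would invoke Proposition \ref{prop:char_Y}: the same block computation shows that the top-left block of $[X,Y]$ equals $[X',Y']$, and since $[X,Y]\in\mathfrak{s}$ its projection lies in $\mathfrak{s}'$; because the first diagonal entry of $Y'$ equals $1\neq 0$, we obtain $[\mathfrak{s}',Y']\subset\mathfrak{s}'$, whence $H_1 = \pm D'S'$ is a shearlet dilation group.

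For the parametrization I would argue that the fiber of $h \mapsto h'$ is one-dimensional. By uniqueness of the factorization $h=\varepsilon\,\delta\,\sigma$, and since $h' = \varepsilon\,\delta'\,\sigma'$ is the correspondingly unique factorization of $h_1 = h'$ in $H_1$, the data $\varepsilon$, $a$ and $\sigma'$ are all recovered from $h_1$ alone. The only residual freedom in $h$ is the choice of $\sigma \in S$ lying over $\sigma'$, i.e. the kernel direction $X_d = E_{1,d}$. Writing $\sigma = I_d + \sum_{i=2}^d c_i X_i$, Lemma \ref{lem:desc_sg}(b) identifies $c_2,\ldots,c_{d-1}$ with the off-diagonal first-row entries of $\sigma'$, and a short computation of the last column gives, for $j\ge 2$, $\sigma_{j,d} = \sum_{i=2}^{d-1} c_i\, d_{i,j,d}$ (the term $i=d$ dropping out because $d_{d,j,d}=0$), while $\sigma_{1,d} = c_d$ is unconstrained. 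Multiplying by $\varepsilon\,\delta$, the entries $h_{j,d} = \varepsilon\, a^{\lambda_j}\sigma_{j,d}$ in rows $j=2,\ldots,d$ depend only on $h_1$, which defines $y_{j-1}(h_1)$, whereas the top entry $h_{1,d} = \varepsilon\, a\, c_d =: r$ ranges freely over $\Bbb{R}$. Uniqueness of $(h_1,r)$ is then immediate, since $h_1 = h'$ and $r = h_{1,d}$ are read directly off $h$.

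The hard part, and the step deserving the most care, is confirming that the lower entries of the last column genuinely depend only on $h_1$ and not on the extra parameter $c_d$: this is exactly the computation isolating $X_d = E_{1,d}$ as the kernel of the block projection, and it rests on the vanishing relations $d_{d,j,k}=0$ and the ordering property $d_{i,j,k}=0$ for $k\le\max(i,j)$ from Lemma \ref{lem:const_admis}(d). Everything else reduces to routine $2\times 2$ block bookkeeping and the uniqueness of the two defining factorizations.
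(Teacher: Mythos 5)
The paper states this proposition without proof (it appears only as ``a further consequence of the block structure''), so there is no argument of the authors to compare against; your proposal is correct and supplies the missing details using exactly the machinery the paper develops. The two essential points --- that the top-left block projection is a group homomorphism carrying $\pm DS$ onto $\pm D'S'$ with $X_2',\ldots,X_{d-1}'$ a canonical basis of the projected Lie algebra (so Lemma \ref{lem:char_admiss} and Proposition \ref{prop:char_Y} apply), and that $X_d=E_{1,d}$ together with $d_{d,j,k}=0$ forces the last-column entries below the first row to depend only on $c_2,\ldots,c_{d-1}$, hence only on $h_1$, leaving $r=h_{1,d}=\varepsilon a c_d$ as the single free parameter --- are both verified correctly.
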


\subsection{Examples}

\label{subsect:ex}

As a result of the previous subsections, we obtain the following general procedure for the systematic construction of shearlet dilation groups:
\begin{enumerate}
 \item Fix a nilpotent associative algebra $\mathcal{N}$.
 \item Pick a Jordan-H\"older basis $a_2,\ldots,a_d$ of $\mathcal{N}$, and compute the canonical basis $X_2,\ldots,X_d$ of the Lie algebra $\mathfrak{s}$ of the associated shearing subgroup. Note that this amounts to determining the structure constants $(d_{i,j,k})_{1\le i,j,k \le d}$. The shearing subgroup is then determined as $S = I_d + \mathfrak{s}$.  
 \item In order to determine the diagonal scaling groups that are compatible with $S$, set up and solve the linear system (\ref{eqn:char_diag}) induced by the nonvanishing $d_{i,j,k}$. 
\end{enumerate}

We will now go through this procedure for several examples or classes of examples. 

\begin{example}
 We start out with the simplest case of a nilpotent algebra $\mathcal{N}$ of dimension $d-1$, namely that of nilpotency class 2. Here one has $ab = 0$ for any $a,b \in \mathcal{N}$, and it is clear that for two such algebras, any linear isomorphism is an algebra isomorphism as well. Picking any basis $a_2, \ldots, a_n$ of $\mathcal{N}$, we obtain $X_i = E_{1,i}$. In particular, the linear system (\ref{eqn:char_diag}) is trivial. Hence any one-parameter diagonal group can be used as scaling subgroup. We thus recover the groups described in (\ref{eqn:clsh_ddim}).
\end{example}

\begin{example} \label{ex:toeplitz}
 Another extreme class of nilpotent algebras of dimension $d$ is that of nilpotency class $d$. Here there exists $b \in \mathcal{N}$ with $b^{d-1} \not= 0$.
 This implies that $b,\ldots,b^{d-1}$ are linearly independent, and then it is easily seen that $a_i = b^{i-1}$, for $i=2,\ldots,d$, defines a Jordan-H\"older basis of $\mathcal{N}$. 
 In this example, the defining relations read
 \begin{equation} \label{eqn:toeplitz_rel}
  a_i a_j = a_{i+j-1},~2 \le i,j,i+j-1 \le d,
 \end{equation} and the resulting canonical Lie algebra basis is then determined as 
 \begin{eqnarray*}
 X_2  & = & \left( \begin{array}{cccccc} 0 & 1 &  &  &  &  \\ 
  & 0 & 1 &  & & \\  &  & \ddots & \ddots & & \\ &  &   &\ddots & \ddots & \\ & &  & & 0 & 1 \\
 & & & & & 0 \end{array} \right)
 ~,~    
  X_3 =  \left( \begin{array}{cccccc} 0 & 0 & 1 &  & & \\ 
  & 0 & 0 & 1 & &  \\  &  & \ddots & \ddots & \ddots & \\ &  &   & 0 & 0 & 1 \\ & &  & & 0 & 0 \\
 & & & & & 0 \end{array} \right)~,\ldots,~ \\
 \end{eqnarray*}
 \[
 X_d  =   \left( \begin{array}{cccccc}   0 & \ldots & \ldots  & \ldots & 0 & 1 \\
                  &   &  &   &  &  0 \\
                  &   &  & & & \vdots \\
                  &  &  & \mathbf{0}  &  &  \vdots\\
                  &  &  &  &      & \vdots  \\
                  &  &  &  &    &   0
            \end{array} \right)~.
            \]
Thus we see that the resulting shearing subgroup is that of the Toeplitz shearlet group from (\ref{eqn:tsh_ddim}). The linear system (\ref{eqn:char_diag}) becomes
\[
 \mu_{i}+\mu_{j} = \mu_{i+j-1}, \mbox{ for } 2 \le i,j, \, i+j-1 \le d.
\] It is easy to see that all solutions of this system are given by 
\[
 \mu_j = (j-1) \delta,~j=2,\ldots,d
\] with $\delta$ an arbitrary real parameter. Thus the scaling subgroups compatible with the Toeplitz dilation group are precisely given by 
\[ \exp(\Bbb{R} {\rm diag}(1,1+\delta,\ldots,1+(d-1)\delta))\,, \] with $\delta \in \mathbb{R}$ arbitrary. 
\end{example}

\begin{remark}
For $d=3$, the two above listed cases are all possible examples of shearing subgroups, and not even  just up to conjugacy. In particular, we find that all shearing subgroups in dimension 3 are compatible with anisotropic dilations. 
\end{remark}

We now turn to the shearing subgroups in dimension 4, with focus on the groups not covered by (\ref{eqn:clsh_ddim}) and (\ref{eqn:tsh_ddim}).  
\begin{example}
Since the nilpotency classes $n=2,4$ are already covered by the previous examples, the remaining 4-dimensional cases of irreducible algebras $\mathcal{A}$ all have nilpotency class 3. It is shown in \cite{Fu98} that $\mathcal{A} \cong \Bbb{R}[Y_1,Y_2]/(Y_1^3, Y_2^2- \alpha Y_1^2,Y_1 Y_2)$, with $\alpha \in \{ -1,0,1 \}$. Here, $\Bbb{R}[Y_1,Y_2]$ denotes the algebra of polynomials with real coefficients and indeterminates $Y_1,Y_2$, and $\mathcal{J} = (Y_1^3,Y_2^2-\alpha Y_1^2, Y_1 Y_2)$ denotes the ideal generated by the three polynomials. 
Then the nilradical $\mathcal{N}$ is generated by $Y_1+\mathcal{J},Y_2+\mathcal{J}$. We choose the basis $a_2 = Y_1 + \mathcal{J},~ a_3 = Y_2 + \mathcal{J},~ a_4 = Y_1^2 + \mathcal{J}$, and obtain as the only nonzero relations
\[
 a_2^2 = a_4~,~ a_3^2 = \alpha a_4 ~.
\] This allows to conclude that $a_2,a_3,a_4$ is indeed a Jordan-H\"older basis. Following Lemma \ref{lem:const_admis} (c), we can read off the canonical basis of the associated shearing subgroup as 
\[
X_2 =  \left( \begin{array}{cccc} 0  & 1  & 0  & 0 \\  0  & 0 &  0 & 1 \\  0 & 0  &0  & 0  \\ 0 &0  & 0 & 0 \end{array} \right) ~,~ X_3 = \left( \begin{array}{cccc} 0  & 0 & 1 & 0\\ 0&0 & 0& 0\\ 0& 0& 0& \alpha \\ 0&0 &0& 0\end{array} \right)~,~
X_4 =  \left( \begin{array}{cccc} 0& 0  & 0& 1 \\ 0 & 0& 0& 0 \\ 0& 0& 0& 0\\ 0& 0& 0& 0\end{array} \right). 
\] We next determine the compatible scaling subgroups. 
In the case $\alpha \not=0$, we obtain the system of equations 
 \[
  2 \mu_2 = \mu_4,~ 2 \mu_3 = \mu_4. 
 \] Thus the infinitesimal generators of scaling subgroups are of the form $Y = {\rm diag}(1,1+\delta,1+\delta,1+2\delta)$, with $\delta \in \Bbb{R}$ arbitrary. 
 
 In the case $\alpha=0$, we only get one equation, namely 
 \[
  2 \mu_2 = \mu_4,
 \] showing that here the compatible infinitesimal generators are of the form $Y = {\rm diag}(1,1+\delta_1,1+\delta_2,1+2\delta_1)$, with $\delta_1,\delta_2 \in \Bbb{R}$ arbitrary. 
\end{example}

Finally, we give an example of a shearing subgroup which is only compatible with isotropic scaling. It is based on the same algebra as Example \ref{ex:toeplitz} (with $d=4$), and as a result the associated shearing subgroups are conjugate. Recall that the groups in Example \ref{ex:toeplitz} are compatible with anisotropic scaling. This illustrates an important, somewhat subtle point: While the precise choice of Jordan-H\"older basis in the procedure described in Lemma \ref{lem:const_admis} is immaterial if one is just interested in guaranteeing the shearing subgroup property, it may have a crucial influence on the availability of compatible {\em anisotropic} scaling subgroups. 
\begin{example} \label{ex:not_anisotropic}
Let $\mathcal{A} = \Bbb{R}[X]/(X^4)$. We use the Jordan-H\"older algebra $a_2 = X+ X^2 + (X^4)$, $a_3 = X^2  +(X^4)$, $a_4 = X^3 + (X^4)$. This leads to the following nonzero relations
\[
 a_2^2 = a_3 + 2 a_4,~ a_2 a_3 = a_4,
\]  which gives rise to the basis 
\[
X_2 =  \left( \begin{array}{cccc} 0  & 1  & 0  & 0 \\  0  & 0 &  1 & 2 \\  0 & 0  &0  & 1  \\ 0 &0  & 0 & 0 \end{array} \right) ~,~ X_3 = \left( \begin{array}{cccc} 0  & 0 & 1 & 0\\ 0&0 & 0& 1\\ 0& 0& 0& 0 \\ 0&0 &0& 0\end{array} \right)~,~
X_4 =  \left( \begin{array}{cccc} 0& 0  & 0& 1 \\ 0 & 0& 0& 0 \\ 0& 0& 0& 0\\ 0& 0& 0& 0\end{array} \right). 
\]
Now the nonzero entries in the matrix $X_2$ imply that the linear system (\ref{eqn:char_diag}) contains the equations 
\[
 2 \mu_2 = \mu_3,~2 \mu_2 = \mu_4, ~\mu_2+\mu_3 = \mu_4.
\] The first two equations imply $\mu_3 = 2 \mu_2 = \mu_4$, and then the third equation yields $\mu_2 = 0$. Hence this shearing subgroup is only compatible with isotropic scaling. 
\end{example}

\section{Anisotropic scaling and wavefront set characterizations}
\label{sec:anis_wave}
In this section we investigate the suitability of the various groups for microlocal analysis. The idea is to verify the criteria derived in \cite{FeFuVo} that allow to establish the suitability of a dilation group for the characterization of the wavefront set via wavelet coefficient decay. As it will be seen, this property only depends on the scaling subgroup. 

\subsection{Criteria for wavefront set characterization}
\label{subseq:crit_wave}

Throughout this subsection $H$ is an irreducibly admissible matrix group, i.e. its dual action has a single open orbit
$\mathcal{O} \subset \Bbb{R}^d$, with associated compact fixed groups.  We use $V \Subset \mathcal{O}$ to denote that
the closure of $V$ inside $\mathcal{O}$ is compact. 

Given $R>0$ and $x\in\Bbb{R}^{d}$, we let $B_{R}(x)$ and $\overline{B_{R}}\left(x\right)$
denote the open/closed ball with radius $R$ and center $x$, respectively. We let $S^{d-1}\subset\Bbb{R}^{d}$ denote the unit sphere. By a neighborhood of $\xi\in S^{d-1}$, we will always mean a {\em relatively open} set $W\subset S^{d-1}$ with $\xi\in W$. Given $R>0$ and an open set $W\subset S^{d-1}$, we let 
\begin{eqnarray*}
&&C(W):=\left\{ r\xi' : \xi'\in W, r>0\right\} =\left\{\xi\in\Bbb{R}^{d}\setminus\left\{ 0\right\}:\frac{\xi}{\left|\xi\right|}\in W\right\},\\
&&C(W,R):=C(W)\setminus\overline{B_{R}}(0).
\end{eqnarray*}
Both sets are clearly open subsets of $\Bbb{R}^{d}\setminus\left\{ 0\right\} $
and thus of $\Bbb{R}^{d}$.

Given a tempered distribution $u$, we call $(x,\xi)\in\Bbb{R}^{d}\times S^{d-1}$
a {\em regular directed point of $u$} if there exists $\varphi\in C_{c}^{\infty}(\Bbb{R}^{d})$,
identically one in a neighborhood of $x$, as well as a $\xi$-neighborhood
$W\subset S^{d-1}$ such that for all $N\in\Bbb{N}$ there exists
a constant $C_{N}>0$ with 
\begin{equation}
\mbox{for all }\xi'\in C\left(W\right)~:~\left|\widehat{\varphi u}(\xi')\right|\le C_{N}(1+|\xi'|)^{-N}.\label{eqn:decay_cond_reg_dir}
\end{equation}

We next formally define the sets $K_{i}$ and $K_{o}$ which will
allow to associate group elements to directions. 
\begin{definition}
Let $\emptyset\neq W\subset S^{d-1}$ be open with $W\subset\mathcal{O}$
(which implies $C\left(W\right)\subset\mathcal{O}$). Furthermore,
let $\emptyset\neq V\Subset\mathcal{O}$ and $R>0$. We define 
\[
K_{i}(W,V,R):=\left\{ h\in H : h^{-T}V\subset C(W,R)\right\} 
\]
as well as 
\[
K_{o}(W,V,R):=\left\{ h\in H :  h^{-T}V\cap C(W,R)\not=\emptyset\right\} .
\]
If the parameters are provided by the context, we will simply write
$K_{i}$ and $K_{o}$. Here, the subscripts $i/o$ stand for ``inner/outer''.
\end{definition}

We now define what we mean by dilation groups characterizing the wavefront set. We first extend the continuous wavelet transform to the space of tempered distributions. I.e., we use $\mathcal{W}_\psi u$, for a Schwartz wavelet $\psi$ and a tempered distribution $u$. 
\begin{definition}
 The dilation group $H$ {\em characterizes the wavefront set} if there exists a nonempty open subset $V \Subset  \mathcal{O}$ with the following property:  For all $0 \not= \psi \in \mathcal{S}(\Bbb{R}^d)$ with ${\rm supp}(\widehat{\psi}) \subset V$, for every $u \in \mathcal{S}'(\Bbb{R}^d)$ and all $(x,\xi) \in \Bbb{R}^d \times (\mathcal{O} \cap S^{d-1})$, the following statements are equivalent:
\begin{enumerate}[label=(\alph*)]
 \item $(x,\xi)$ is a regular directed point of $u$.
 \item { There exists a neighborhood
$U$ of $x$, some $R>0$ and a $\xi$-neighborhood $W\subset S^{d-1}$
such that for all $N\in\Bbb{N}$ there exists a constant $C_{N}>0$ such that for all $y\in U$, and for all $h\in K_{o}(W,V,R)$ the following estimate holds: 
\[
|W_{\psi}u(y,h)|\le C_{N}\|h\|^{N}.
\]}
 \end{enumerate}
\end{definition}

Note that the definition excludes a set of directions $\xi$ from the analysis of the wavefront set, namely the directions not contained in $\mathcal{O} \cap S^{d-1}$. These directions always constitute a set of measure zero. Recall from Proposition \ref{prop:open_orbit} that  in the case of shearlet dilation groups, this exceptional set is given by $(\{ 0 \} \times \Bbb{R}^{d-1}) \cap S^{d-1}$. 

We next recall the sufficient conditions for dilation groups that characterize the wavefront set, as established in \cite{FeFuVo}. The first one is related to the problem that one would like to interpret the norm as a scale parameter. 
\begin{definition}
\label{defn:micro_regular}Let $\xi\in\mathcal{O}\cap S^{d-1}$ and
$\emptyset\neq V\Subset\mathcal{O}$. The dual action is called {\em $V$-microlocally
admissible in direction $\xi$} if there exists a $\xi$-neighborhood
$W_{0}\subset S^{d-1}\cap\mathcal{O}$ and some $R_{0}>0$ such that
the following hold: 
\begin{enumerate}
\item \label{enu:NormOfInverseEstimateOnKo}There exist $\alpha_{1}>0$
and $C>0$ such that 
\[
\|h^{-1}\|\le C\cdot\|h\|^{-\alpha_{1}}
\]
holds for all $h\in K_{o}(W_{0},V,R_{0})$. 
\item \label{enu:NormIntegrability}There exists $\alpha_{2}>0$ such that
\[
\int_{K_{o}(W_{0},V,R_{0})}\|h\|^{\alpha_{2}}\,{\rm d}h<\infty.
\]

\end{enumerate}
The dual action is called {\em microlocally admissible in direction
$\xi$} if it is $V$-microlocally admissible in direction $\xi$
for some $\emptyset\neq V\Subset\mathcal{O}$. 
\end{definition}

The second important condition is contained in the following definition. It can be understood as formalizing the ability of the associated wavelet systems to be able to make increasingly fine distinctions between different directions, as the scales go to zero. 
\begin{definition}
\label{def:ConeApproximationProperties}Let $\xi\in\mathcal{O}\cap S^{d-1}$ and  $\emptyset\neq V \Subset\mathcal{O}$. The dual action has
the {\em $V$-cone approximation property at $\xi$} if for
all $\xi$-neighborhoods $W\subset S^{d-1}$ and all $R>0$ there
are $R'>0$ and a $\xi$-neighborhood $W'\subset S^{d-1}$ such that
\[
K_{o}(W',V,R')\subset K_{i}(W,V,R).
\]
\end{definition}

We now have the following \cite[Corollary 4.9]{FeFuVo}:
\begin{theorem} \label{thm:char_wfset}
 Assume that the dual action is $V$-microlocally admissible at some $\xi_0 \in \mathcal{O}$ and has the $V$-cone approximation property at $\xi_0$, for some nonempty open subset $V \subset \mathcal{O}$. Then $H$ characterizes the wavefront set. 
\end{theorem}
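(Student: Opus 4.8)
The plan is to establish the equivalence of conditions (a) and (b) in the definition of characterizing the wavefront set, using the distinguished $V$ provided by the hypotheses and working throughout on the Fourier side. Combining \eqref{eqn:def_quasireg} with Plancherel's theorem gives
\[
W_{\psi}u(y,h)=|\det h|^{1/2}\int_{\Bbb{R}^{d}}\widehat{u}(\eta)\,\overline{\widehat{\psi}(h^{T}\eta)}\,e^{2\pi i\langle y,\eta\rangle}\,d\eta,
\]
so that, since $\operatorname{supp}(\widehat{\psi})\subset V$, the integrand lives on $h^{-T}V$; after the substitution $\eta=h^{-T}v$ this becomes $|\det h|^{-1/2}\int_{V}\widehat{u}(h^{-T}v)\,\overline{\widehat{\psi}(v)}\,e^{2\pi i\langle y,h^{-T}v\rangle}\,dv$. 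This exhibits exactly the mechanism the sets $K_{i},K_{o}$ are built to track: a group element $h$ probes $\widehat{u}$ on the frequency region $h^{-T}V$, and the two conditions on the dual action control how this region sits relative to a fixed cone around $\xi_{0}$.

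For the implication (a) $\Rightarrow$ (b) I would fix the cutoff $\varphi$ and the $\xi_{0}$-cone $C(W_{0})$ on which $\widehat{\varphi u}$ decays faster than any polynomial, and split $u=\varphi u+(1-\varphi)u$. Applying the cone approximation property (Definition \ref{def:ConeApproximationProperties}) to the target pair $(W_{0},R_{0})$ yields a neighborhood $W$ and radius $R$ with $K_{o}(W,V,R)\subset K_{i}(W_{0},V,R_{0})$; these are the data claimed in (b). For $h$ in this set the region $h^{-T}V$ lies inside $C(W_{0},R_{0})$, so every $\eta\in h^{-T}V$ satisfies $|\eta|\gtrsim\|h\|^{-1}$ (because $V$ is bounded away from $0$) and enjoys the rapid decay of $\widehat{\varphi u}$; feeding this into the displayed integral and using the first clause of $V$-microlocal admissibility (Definition \ref{defn:micro_regular}), namely $\|h^{-1}\|\le C\|h\|^{-\alpha_{1}}$, to absorb the factor $|\det h|^{-1/2}\le C\|h\|^{-d\alpha_{1}/2}$, turns polynomial frequency decay into the bound $|W_{\psi}(\varphi u)(y,h)|\le C_{N}\|h\|^{N}$. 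The term $(1-\varphi)u$ is treated in physical space: as $\|h\|\to 0$ on $K_{o}(W,V,R)$ the wavelet $\pi(y,h)\psi$ concentrates near $y$ at shrinking scale while $1-\varphi$ vanishes near $x$, so the Schwartz decay of $\psi$ makes this contribution rapidly decreasing in $\|h\|$, uniformly for $y$ in a small neighborhood $U$ of $x$.

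For the converse (b) $\Rightarrow$ (a) I would recover the frequency decay of $\widehat{\varphi u}$ on a cone around $\xi_{0}$ from the coefficient decay in (b), with $\varphi$ supported in $U$. Since $\varphi u$ is compactly supported its Fourier transform is smooth, and one can estimate $\widehat{\varphi u}(\eta)$ for $\eta\in C(W')$ through a Calder\'on-type reproducing formula built from $\psi$ (a localized analogue of \eqref{eqn:wvlt_inv}). A frequency $\eta$ is seen only by group elements with $h^{T}\eta\in V$, i.e.\ $h\in K_{o}(W,V,R)$ for suitable data; splitting the $H$-integral into $K_{o}(W,V,R)$ and its complement, the first piece is controlled by the hypothesised decay $|W_{\psi}u(y,h)|\le C_{N}\|h\|^{N}$ together with the integrability clause $\int_{K_{o}}\|h\|^{\alpha_{2}}\,dh<\infty$, which together yield an arbitrarily high negative power of $|\eta|$, while the complement contributes nothing since there $\widehat{\psi}(h^{T}\eta)=0$. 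A renewed appeal to the cone approximation property ensures the target cone $C(W')$ is reached from the $K_{o}$ furnished by (b).

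The step I expect to be the main obstacle is this reconstruction direction, together with the uniform conversion of the operator norm into a genuine scale parameter. Because $H$ is anisotropic, $\|h\|$ and $\|h^{-1}\|$ need not be reciprocal, so a priori $\|h\|^{N}$ need not measure fineness of scale at all; it is precisely the first clause of $V$-microlocal admissibility that forces $h\in K_{o}(W,V,R)$ with large $R$ to have small $\|h\|$, legitimising $\|h\|^{-1}$ as an inverse frequency, while the integrability clause keeps the reconstructing integral convergent. Moreover, since the ordinary inversion formula is not directly available for tempered distributions (as noted after \eqref{eqn:wvlt_inv}), making the localized reproducing identity rigorous and keeping every estimate uniform in $y\in U$ while matching the cones and radii through the cone approximation property is the technical heart of the argument, and is where the bulk of the work in \cite{FeFuVo} is spent.
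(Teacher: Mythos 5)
You should first note that the paper does not prove this statement at all: Theorem \ref{thm:char_wfset} is imported verbatim as Corollary 4.9 of \cite{FeFuVo}, so there is no in-paper proof to compare your argument against. Judged on its own terms, your sketch reproduces the correct architecture of the cited proof: the Fourier-side formula showing that $h$ probes $\widehat{u}$ on $h^{-T}V$, the splitting $u=\varphi u+(1-\varphi)u$, the use of the cone approximation property to trap $h^{-T}V$ inside the cone where $\widehat{\varphi u}$ decays, and the use of the two clauses of $V$-microlocal admissibility to convert $\|h\|$ into an inverse frequency and to keep the reconstruction integral convergent. The direction (a) $\Rightarrow$ (b) for the $\varphi u$ term is essentially complete as you describe it (the estimate $|\eta|\geq c/\|h\|$ for $\eta\in h^{-T}V$ and the absorption of $|\det h|^{-1/2}$ via $\|h^{-1}\|\le C\|h\|^{-\alpha_1}$ are the right moves).

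There are, however, two genuine gaps that you gesture at but do not close, and they are precisely where the content lies. First, in the converse direction you invoke ``a localized analogue of \eqref{eqn:wvlt_inv}''; no such formula is available off the shelf, since, as the paper itself remarks after \eqref{eqn:wvlt_inv}, the inversion formula does not extend to tempered distributions. Even granting a reproducing identity for the compactly supported distribution $\varphi u$, your hypothesis (b) controls $W_\psi u$, not $W_\psi(\varphi u)$, and only for $y$ in the neighborhood $U$ and $h\in K_o(W,V,R)$. To bound $\widehat{\varphi u}(\eta)$ you must therefore control (i) the discrepancy $W_\psi(\varphi u)-W_\psi u=W_\psi((\varphi-1)u)$, which requires a quantitative almost-orthogonality estimate between wavelets localized near $\operatorname{supp}\varphi$ and the distribution $(1-\varphi)u$ supported away from $x$, and (ii) the contribution of base points $y\notin U$ in the reconstruction integral, for which (b) gives no information. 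Neither is addressed. Second, the treatment of $(1-\varphi)u$ in the forward direction needs an a priori polynomial bound on $W_\psi u(y,h)$ in terms of $\|h\|$, $\|h^{-1}\|$ and $|y|$ (valid for any tempered $u$ and Schwartz $\psi$) before the rapid gain from the spatial separation can be converted into the uniform estimate $C_N\|h\|^N$ on all of $K_o$; as written, ``concentrates at shrinking scale'' is only an asymptotic statement. These are exactly the technical steps occupying Sections 3--4 of \cite{FeFuVo}, so your roadmap is faithful, but as a proof it is incomplete at its critical joints.
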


\begin{remark}
The property of characterizing the wavefront set is linked to anisotropic scaling, in the following sense: If $H$ characterizes the wavefront set, then 
\[ H \cap \Bbb{R}^+ \cdot I_d  = \{   I_d  \} \,, \] by \cite[Lemma 4.10]{FeFuVo}. Hence if $H$ is a shearlet dilation group characterizing the wavefront set, its shearing subgroup must admit at least one anisotropic compatible scaling subgroup. This excludes the shearing group constructed in Example \ref{ex:not_anisotropic}. 

Theorem \ref{thm:char_wfset} therefore implies that every group failing the anisotropy criterion $H \cap \Bbb{R}^+ \cdot I_d = \{   I_d \}$ must necessarily fail either the microlocal admissibility or the cone approximation property. It is in fact the latter that breaks down, as noted in \cite[Lemma 4.4]{FeFuVo}.  

These considerations highlight the importance of understanding when a given shearing groups admits anisotropic scaling.
\end{remark}

\subsection{Characterization of the wavefront set for shearlet dilation groups}
\label{subsec:charact_wavefront}
We can now state a very general theorem concerning the ability of shearlet groups to characterize the wavefront set. Note that there are no conditions on the shearing subgroups. 
\begin{theorem} \label{thm:wfset_general_shearlet}
Let $H$ be a shearlet dilation group and let $Y = {\rm diag}(1,\lambda_2,\ldots,\lambda_d)$ denote the infinitesimal generator of the scaling subgroup. If
 $0 < \lambda_i < 1$ holds, for all $2 \le i \le d$, then $H$ characterizes the wavefront set. 
\end{theorem}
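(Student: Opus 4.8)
The plan is to invoke Theorem~\ref{thm:char_wfset}: it suffices to exhibit one point $\xi_0$ in the open orbit $\mathcal{O}=\Bbb{R}^\times\times\Bbb{R}^{d-1}$ (Proposition~\ref{prop:open_orbit}) together with one neighborhood $V\Subset\mathcal{O}$ at which the dual action is $V$-microlocally admissible and has the $V$-cone approximation property. I would take $\xi_0=e_1$ and $V$ a small box around $e_1$. Using the factorization $h=\pm ds$ with $d={\rm diag}(a,a^{\lambda_2},\ldots,a^{\lambda_d})$, $a=e^r>0$, and $s\in S$ parametrized by its first row $t=(t_2,\ldots,t_d)$ via Lemma~\ref{lem:desc_sg}, the decisive computation is that of $h^{-T}\xi=d^{-1}s^{-T}\xi$. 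Since $s^{-T}$ is lower unipotent its first component is untouched, so
\begin{equation*}
(h^{-T}\xi)_1=a^{-1}\xi_1,\qquad (h^{-T}\xi)_j=a^{-\lambda_j}\,\eta_j(\xi,t)\quad(j\ge2),
\end{equation*}
where $\eta=s^{-T}\xi$ depends polynomially on $t$ with $\eta_1=\xi_1$. Consequently the off-axis-to-axis ratio carries the factor $a^{1-\lambda_j}$, and here the hypothesis $0<\lambda_j<1$ enters decisively: as $a\to0$ the images $h^{-T}\xi$ contract toward the $e_1$-axis. Restricting $W_0$ to a small neighborhood of $e_1$ (which excludes $-e_1$) confines the whole analysis to the positive part $DS$.

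Next I would analyze $K_o(W_0,V,R_0)$ geometrically. Membership means $h^{-T}\xi\in C(W_0,R_0)$ for some $\xi\in V$; since inside the thin cone the first component $a^{-1}\xi_1$ must dominate the (large) norm, this forces $a\le C$, while the smallness of the off-axis components forces shearing bounds of the type $|t_i|\lesssim a^{\lambda_i-1}$. Feeding these back into $h=ds$, and using the relation $\lambda_k=\lambda_i+\lambda_j-1$ valid whenever the structure constant $d_{i,j,k}\ne0$ (Lemma~\ref{lem:char_diag}), every entry of $h$ is seen to be $\lesssim a^{\lambda_{\min}}$, where $\lambda_{\min}=\min_{2\le i\le d}\lambda_i\in(0,1)$, whereas the diagonal gives $\|h\|\gtrsim a^{\lambda_{\min}}$; hence $\|h\|\asymp a^{\lambda_{\min}}$, and a parallel computation gives $\|h^{-1}\|\asymp a^{-1}$. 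Since $a\le C$ forces $\|h\|\lesssim1$, the relation $\|h^{-1}\|\asymp a^{-1}\asymp\|h\|^{-1/\lambda_{\min}}$ yields condition~(1) of microlocal admissibility with $\alpha_1=1/\lambda_{\min}$.

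For the integrability condition~(2) I would write the left Haar measure of $H=S\rtimes\Bbb{R}^\times$ in the coordinates $(a,t)$, in which it equals a fixed power of $a$ times $\tfrac{da}{a}\,dt$, and integrate over the region just described: $a\in(0,C]$ and $|t_i|\lesssim a^{\lambda_i-1}$, whose $t$-slice has volume $\asymp a^{-\sum_i(1-\lambda_i)}$. Against the weight $\|h\|^{\alpha_2}\asymp a^{\lambda_{\min}\alpha_2}$, the only possible divergence is at $a\to0^+$, and the resulting exponent of $a$ can be made positive by choosing $\alpha_2$ large, precisely because $\lambda_{\min}>0$. This is the step in which the positivity of the exponents $\lambda_i$ is essential.

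Finally, for the cone approximation property, given an $e_1$-neighborhood $W$ and $R>0$ I would produce $R'$ large and $W'$ thin so that $K_o(W',V,R')\subset K_i(W,V,R)$. The mechanism is again the anisotropic contraction: membership in $K_o(W',V,R')$ forces $a$ small (large norm) and $t$ controlled as above, and then the factor $a^{1-\lambda_j}\to0$ makes the ratio $|(h^{-T}\xi')_j|/|(h^{-T}\xi')_1|$ small for \emph{every} $\xi'\in V$, so that the whole image $h^{-T}V$ lands in $C(W)$ with norm exceeding $R$. I expect this last step to be the main obstacle: unlike the definition of $K_o$, which supplies only one good $\xi$, the inclusion into $K_i$ requires a bound uniform over all $\xi'\in V$, and one must check that the polynomial dependence of $s^{-T}$ on the (now $a$-dependent) shearing parameters is genuinely dominated by the contracting factors $a^{1-\lambda_j}$. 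The scaling relations $\lambda_k=\lambda_i+\lambda_j-1$ coming from the nonzero structure constants are exactly what guarantees this domination.
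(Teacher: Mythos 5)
Your overall strategy coincides with the paper's: both reduce to Theorem~\ref{thm:char_wfset} at $\xi_0=e_1$, with $V$ a box of the form $\Omega\bigl((\tau_1,\tau_2)\times B_{\epsilon_0}(0)\bigr)$ around $e_1$, and both rest on the observation that the off-axis components of $h^{-T}\xi$ carry the contraction factor $a^{1-\lambda_j}$, which is where $\lambda_j<1$ enters. The genuine gap is exactly where you flag it: the $V$-cone approximation property, which is the heart of the proof, is described but not proved, and the closing device you propose is not the one that is needed. The scaling relations $\lambda_k=\lambda_i+\lambda_j-1$ play no role in the paper's argument for this step. What is used instead is that the dual action of $s(t)^{-1}$ has the block form $\bigl(\begin{smallmatrix}1&0^T\\ t& I_{d-1}+A(t)^T\end{smallmatrix}\bigr)$ with the purely linear bound $\lVert A(t)\rVert\le C|t|$, valid for any shearing subgroup. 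From membership in $K_o(W',V,R')$ one first extracts $a<2\tau_2/R'<1$, and then the self-improving inequality $|t|\le \epsilon'+(1+C|t|)\,a^{1-\lambda_{\max}}\epsilon_0\le 2\epsilon'+\tfrac12|t|$ (valid once $R'$ is so large that $a^{1-\lambda_{\max}}\epsilon_0$ is small) yields the uniform bound $|t|\le 4\epsilon'$. With both $|t|$ and $a^{1-\lambda_{\max}}$ controlled, the image of the \emph{entire} box $V$ lands in $C(\omega(W_\epsilon),R)$; this bootstrap is precisely the uniformity over $\xi'\in V$ that you identify as the obstacle, and until some such argument is supplied your proof is incomplete at its central step.

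The microlocal admissibility part of your sketch is workable but harder than necessary, and it leans on estimates you do not establish: $\lVert h^{-1}\rVert\asymp a^{-1}$ on $K_o$ requires controlling the polynomial inverse $s(t)^{-1}$ on the region $|t_i|\lesssim a^{\lambda_i-1}$, which does genuinely need the relations $\lambda_k=\lambda_i+\lambda_j-1$ and some care. The paper proves the cone approximation property first and then exploits it twice: it replaces $K_o$ by the smaller set $K_i$, on which $|t|\le 1$ is bounded and the estimates $\lVert h^{-1}\rVert\lesssim a^{-1}$, $\lVert h\rVert\lesssim a^{\lambda_{\min}}$ become elementary compactness statements; and it deduces the integrability condition (2) from condition (1) by citing Lemma~4.7 of \cite{FeFuVo}, rather than by your direct Haar-measure computation (which could be made to work, but adds a further unverified step about the form of the Haar measure in the coordinates $(t,a)$).
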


\begin{remark}
 We can now quickly go through the examples of shearing subgroups in Subsection \ref{subsect:ex} and show that for most cases, there exists a compatible anisotropic scaling subgroup that allows to characterize the wavefront set. Writing $\lambda_i = 1 + \mu_i$ as in Lemma \ref{lem:char_admiss}, the condition  from Theorem \ref{thm:wfset_general_shearlet} translates to $-1 < \mu_i < 0$, for $2 \le i \le d$. 
 Apart from the group in Example \ref{ex:not_anisotropic}, which was specifically constructed to not allow any anisotropic scaling, all other shearing groups can be combined with a compatible scaling group in such a way that the resulting shearlet transform fulfills the conditions of Theorem \ref{thm:wfset_general_shearlet}, and therefore characterizes the wavefront set. Note that this was previously known only for the original shearlet group \cite{KuLa,FeFuVo}.
In particular, we may combine the Toeplitz shearing subgroup with the scaling subgroup with 
exponents $(1,1-\delta,\ldots,1-(d-1)\delta)$, and choosing $\delta \in (0, 1/(d-1))$ guarantees that the Toeplitz shearlet transform characterizes the wavefront set.  
\end{remark}

The proof of the Theorem amounts to verifying the cone approximation property and microlocal admissibility of the dual action, and this will be carried out in the following two propositions.
For the remainder of this section, we fix a shearlet dilation group $H$ with infinitesimal generator ${\rm diag}(1,\lambda_2,\ldots,\lambda_d)$ of the scaling subgroup. 
We let $\lambda_{\max} = \max_{i\ge 2} \lambda_i$, and $\lambda_{\min} = \min_{i\ge 2} \lambda_i$.

\begin{proposition}
If $\lambda_{\max}<1$, there
exists an open subset $\emptyset\neq V \Subset\mathcal{O}$ such that
the dual action of $H$ on the orbit $\mathcal{O}$ has the
$V$-cone approximation  property at $(1,0,\ldots,0)^T\in S^{d-1}\cap\mathcal{O}$.
\end{proposition}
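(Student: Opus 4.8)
The plan is to work out the dual action explicitly at the distinguished point $\xi_0 = (1,0,\ldots,0)^T$ and translate the cone approximation property into a concrete estimate on how $h^{-T}$ moves the first coordinate versus the remaining coordinates as $\|h\| \to \infty$. Recall from Proposition~\ref{prop:open_orbit} that the open orbit is $\mathcal{O} = \Bbb{R}^\times \times \Bbb{R}^{d-1}$, so a small product neighborhood $V$ of $\xi_0$ inside $\mathcal{O}$ is available. Writing a general $h = \pm\,{\rm diag}(a, a^{\lambda_2}, \ldots, a^{\lambda_d})\, s$ with $s \in S$ unipotent and $a > 0$, I would first compute $h^{-T}$ acting on $V$. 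The key point is that $h^{-T} = \pm\, (s^{-T})\,{\rm diag}(a^{-1}, a^{-\lambda_2}, \ldots, a^{-\lambda_d})$, and since $s$ is upper unitriangular, $s^{-T}$ is lower unitriangular, so it does not affect the first coordinate's leading behavior. Because $\lambda_{\max} < 1$, the first diagonal entry $a^{-1}$ decays strictly faster (for $a$ large) or grows strictly slower than all the others, which is precisely what forces the image $h^{-T}\xi$ to concentrate around the $\xi_0$-direction as the scale degenerates.

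The heart of the argument is to show: given an arbitrary target cone $C(W,R)$ around $\xi_0$, one can find a smaller cone $C(W',R')$ such that $K_o(W',V,R') \subset K_i(W,V,R)$, meaning every $h$ whose image $h^{-T}V$ merely \emph{touches} the small cone actually has its \emph{entire} image $h^{-T}V$ inside the large cone. I would proceed by parametrizing: for $\xi = (\xi_1, \xi') \in V$ with $\xi_1$ near $1$ and $\xi'$ small, the image $h^{-T}\xi$ has first coordinate of size comparable to $a^{-1}$ and transverse part whose $i$-th component is controlled by $a^{-\lambda_i}$ times bounded polynomial factors coming from $s^{-T}$. The ratio governing the angular position is therefore of order $a^{-\lambda_i}/a^{-1} = a^{1-\lambda_i} = a^{-|\mu_i|}$ up to the unipotent perturbations; since each $1 - \lambda_i > 0$, this ratio tends to $0$ as $a \to \infty$. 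The condition that $h^{-T}V$ touches the small cone $C(W',R')$ pins down the scale $a$ (forcing it large once $R'$ is large), and then the uniform shrinkage of all transverse ratios guarantees containment in the larger cone $C(W,R)$.

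The step I expect to be the main obstacle is controlling the unipotent factor $s^{-T}$ uniformly. Unlike the purely diagonal case, the shearing variables are unbounded, so one must verify that the polynomial growth contributed by $s^{-T}$ in the transverse coordinates is genuinely dominated by the anisotropic scaling decay $a^{1-\lambda_i}$, rather than overwhelming it. The clean way to handle this is to exploit the homogeneity structure: by Lemma~\ref{lem:desc_sg} the entries of $s$ depend linearly on its first row, and the compatibility of $Y$ with $\mathfrak{s}$ (Proposition~\ref{prop:char_Y}, equivalently the relation $1 - \lambda_i = \lambda_j - \lambda_k$ from Lemma~\ref{lem:char_diag}) means the scaling conjugation acts on the shearing coordinates with a precise grading. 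I would use this grading to absorb the shearing entries into renormalized parameters $a^{\lambda_j - \lambda_k} s_\bullet$ that stay bounded on the relevant subset of $H$, so that after this renormalization the estimate reduces to the transparent diagonal case. Once the shearing is tamed in this fashion, the containment $K_o(W',V,R') \subset K_i(W,V,R)$ follows by choosing $R'$ large enough to force the scale $a$ into the regime where $a^{1-\lambda_i}$ is small, and then shrinking $W'$ to close the remaining gap.
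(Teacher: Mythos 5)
Your overall strategy---compute the dual action explicitly in the coordinates $h=\pm\,\mathrm{diag}(a,a^{\lambda_2},\dots,a^{\lambda_d})\,s$, observe that the transverse components of $h^{-T}\xi$ relative to the first one scale like $a^{1-\lambda_i}$, and use the touching condition to pin down the regime of $a$---is the same as the paper's. However, a directional error runs through the whole sketch and, as written, breaks the argument. For $h^{-T}V$ to meet $C(W',R')$ the image must lie outside the ball of radius $R'$; since the first coordinate of $h^{-T}\xi$ equals $\pm a^{-1}\xi_1$ with $\xi_1$ bounded on $V$, a large $R'$ forces $a^{-1}$ to be large, i.e.\ $a<2\tau_2/R'<1$ is \emph{small}, not large as you claim. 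Correspondingly, the ratios $a^{1-\lambda_i}$ tend to $0$ as $a\to 0^{+}$ (because $1-\lambda_i>0$), not as $a\to\infty$; as $a\to\infty$ they blow up and the direction of $h^{-T}\xi$ drifts \emph{away} from $e_1$ toward the hyperplane $\xi_1=0$. (Also $a^{1-\lambda_i}\neq a^{-|\mu_i|}$: with $\mu_i=\lambda_i-1<0$ one has $a^{-|\mu_i|}=a^{\lambda_i-1}$.) Your heuristic that $a^{-1}$ ``decays faster than the other diagonal entries for $a$ large, which forces concentration around the $\xi_0$-direction'' is exactly backwards: concentration near $e_1$ requires the first coordinate to \emph{dominate} the transverse ones, which happens precisely in the regime $a\to 0$.

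The second gap is the treatment of the shearing factor, which you correctly identify as the main obstacle but do not resolve. Your proposed renormalization $a^{\lambda_j-\lambda_k}s_\bullet$ amounts to conjugating $S$ by $h(0,a)$, under which the canonical basis vector $X_i$ scales by $a^{\lambda_i-1}$; since $a\to 0$ in the relevant regime, these renormalized parameters blow up unless $t$ is already controlled, so the claim that they ``stay bounded on the relevant subset of $H$'' is precisely what needs proving and cannot be assumed. The paper handles this differently and more directly: it writes $h=\pm h(t,1)^{-1}h(0,a)$ (note the inverse on the shearing factor), so that $(h^{-1})^T=\pm\bigl(I_d+\sum_i t_iX_i^T\bigr)h(0,a^{-1})$ has the block form $\begin{pmatrix}1&0^T\\ t& I_{d-1}+A(t)^T\end{pmatrix}$ with $\lVert A(t)\rVert\le C|t|$. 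The requirement that some point of the image lie in the narrow angular window $W_{\epsilon'}$ then yields $|t|\le \epsilon'+(1+C|t|)\,a^{1-\lambda_{\max}}\epsilon_0$, and choosing $R'$ large enough that $(2\tau_2/R')^{1-\lambda_{\max}}\epsilon_0\le\min\{\epsilon',1/(2C)\}$ lets one absorb the $C|t|$ term and conclude $|t|<4\epsilon'$. This explicit bound on the shearing parameter, extracted from the touching condition itself rather than from a grading argument, is the step missing from your sketch; without it the containment $K_o(W',V,R')\subset K_i(W,V,R)$ does not follow.
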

\begin{proof}
We will employ the structural properties of shearing subgroups derived in Section \ref{sect:gen_shearlets}. 
We let $S$ and $D$ denote the shearing and scaling subgroups of $H$, respectively. The infinitesimal generator
of $D$ is a diagonal matrix with the entries $1, \lambda_2,\ldots,\lambda_d$. We let $X_2,\ldots,X_d$ denote the canonical
basis of $\mathfrak{s}$, consisting of strictly upper triangular matrices $X_i$. By Lemma \ref{lem:desc_sg}, each $h \in S$ is
uniquely described by
\[
 h = h(t,1) = I_d + \sum_{i=2}^d t_i X_i\,,
\] 
where $t= (t_2,\ldots,t_d)^T$ denotes the vector of first row entries of $h(t,1)$. 
For $H$, we thus obtain the global chart
\[
h(t,a)=\left(I_d + \sum_{i=2}^d t_i X_i
\right)
{\rm sgn}(a){\rm diag}(|a|,|a|^{\lambda_2},\ldots,|a|^{\lambda_d})
\in  GL(d,\Bbb{R}),
\] 
with $(t,a) \in \Bbb{R}^{d-1} \times \Bbb{R}^\times$.

For the purpose of the following computations, it is possible and beneficial to slightly modify this construction and replace $h(t,1)$ by its inverse. 
Thus, every $h\in H$ can be written (uniquely) as $h=\pm h(t,1)^{-1} h(0,a)$ with $t\in\Bbb{R}^{d-1}$ and $a\in (0,+\infty)$. The dual action is then given by
\begin{equation} \label{eq:1}
(h^{-1})^T =\pm (h(0,a)^{-1}h(t,1) )^T = \pm
\left(I_d + \sum_{i=2}^d t_i X_i^T\right) h(0,a^{-1}),
\end{equation}
where by construction 
\begin{equation}\label{eq:2}
I_d+\sum_{i=2}^d t_i X_i^T = 
  \begin{pmatrix}
  1 &      0^T\\
  t  &       I_{d-1} + A(t)^T              
  \end{pmatrix},
\end{equation}
with $A(t)$ being a $(d-1)\times (d-1)$ strictly lower-triangular matrix satisfying
\begin{equation}
  \label{eq:5}
  \lVert A(t)\rVert \leq C |t|
\end{equation}
with a constant $C$ depending only on $H$.

We now parametrise the open orbit $\mathcal O$ by  the global chart
provided by affine coordinates
\[
\Omega: \Bbb{R}^\times\times\Bbb{R}^{d-1} \to \mathcal O\qquad
\Omega(\tau,v)=\tau (1,v^T)^T,
\]
and $S^{d-1}\cap \mathcal O$ by the corresponding diffeomorphism to its image
\[ \omega:\Bbb{R}^{d-1}\to S^{d-1}\cap \mathcal{O}, \qquad \omega(v)=
\frac{(1,v^T)^T}{\sqrt{1+|v|^2}}.
\]

Given $\epsilon>0$, we set
\[
W_{\epsilon}=\{v\in\Bbb{R}^{d-1} : |v| < \epsilon \}=B_\epsilon(0),
\]
since $\{W_{\epsilon}:\epsilon>0\}$ is a neighbourhood basis of the origin in
$\Bbb{R}^{d-1}$ and $\{\omega(W_{\epsilon}) :\epsilon>0\}$ is a neighbourhood basis of
$\xi_0=(1,0,\ldots,0)\in S^{d-1}\cap \mathcal O$.  

Furthermore, for fixed $0<\tau_1<\tau_2$ and $\epsilon_0>0$  the set
\[
V= \Omega(\, (\tau_1,\tau_2)\times W_{\epsilon_0}\, )
\]
is an open subset with $V\Subset \mathcal O$.

Given $h\in H$, as in~(\ref{eq:1}), and $\xi\in V$, then
$\xi=\Omega(\tau,v)$ with $\tau_1<\tau<\tau_2$ and $v\in W_{\epsilon_0}$, and we get
\begin{align*}
(h^{-1})^T \xi & =  \pm  \tau  \begin{pmatrix}
  1 &      0^T\\
  t  &      I_{d-1}+ A(t)^T              
  \end{pmatrix}  \begin{pmatrix}
  a^{-1}   \\
  v'           
  \end{pmatrix}  = \pm  a^{-1} \tau 
 \begin{pmatrix}
 1 \\
   t+(I_{d-1} + A(t)^T)v''         
  \end{pmatrix}
\end{align*}
where $v',v''\in\Bbb{R}^{d-1}$ have components given by $v'_i =
a^{-\lambda_i} v_i$ and $v''_i= a^{1-\lambda_i} v_i $ for all $i=2,\ldots, d$. Hence
\[
(h^{-1})^T(V)=\Omega\left(\, (\pm a^{-1} \tau_1,\pm a^{-1} \tau_2)\times
  (t+(I_{d-1} +A(t)^T)W_{\epsilon_0}^a)\,\right)
   \]
where $ W_{\epsilon_0}^a=\{v''\in\Bbb{R}^{d-1} : v''_i=a^{1-\lambda_i}v_i,\
|v|<\epsilon_0\}$. 

Fix now $R>0$ and a neighborhood  $W\subset S^{d-1}\cap\mathcal O$ of
$\xi_0$. Without loss of  generality we can assume that
$W=\omega(W_\epsilon)$ for some $\epsilon>0$. Furthermore, since
\[
 (R,+\infty)\times W_\epsilon \subset \Omega^{-1}( C(\omega(W_\epsilon),R)) \subset
 (\frac{R}{\sqrt{1+\epsilon^2}},+\infty)\times W_\epsilon\subset
 (\frac{R}{2},+\infty)\times W_\epsilon ,
\]
where the last inclusion on the right holds if $\epsilon\leq 1$, then the
$V$-cone approximation property holds true if there exist $R'>0$ and 
$0<\epsilon' \leq 1$ such that for all $h\in H$ satisfying
\begin{subequations}
  \begin{equation}
    (h^{-1})^T(V)\cap \Omega\left(\, (\frac{R'}{2},+\infty)\times
      W_{\epsilon'}\,\right) \not=\emptyset,\label{eq:9a}
  \end{equation}
it holds that
  \begin{equation}
    (h^{-1})^T(V)\subset  \Omega\left(\,(R,+\infty)\times
      W_\epsilon\,\right).\label{eq:9b}
  \end{equation}
\end{subequations}
Take $R'>0$ and  $0<\epsilon' <\sqrt{3}$, which we will fix later on as functions of
$R$ and $\epsilon$,  and $h\in H$ as in (\ref{eq:1}). If $h=- (h(0,a)^{-1}h(t,1) )^T$ then
\[
 \left(\,( \frac{R'}{2},+\infty)\times W_{\epsilon'} \,\right)\cap
 \left(\, (-a^{-1} \tau_2,-a^{-1} \tau_1)\times 
  (t+(I_{d-1} +A(t)^T)W_{\epsilon_0}^a)
  \,\right)=\emptyset,
\]
so that (\ref{eq:9a}) implies that  $h=+  (h(0,a)^{-1}h(t,1))^T$ and
\[
 \left(\,( \frac{R'}{2},+\infty)\times W_{\epsilon'} \,\right)\cap \left(\, ( a^{-1} \tau_1, a^{-1} \tau_2)\times
  (t+(I_{d-1} +A(t)^T)W_{\epsilon_0}^a)
  \,\right)\not=\emptyset.
\]
Hence
\[
   R'< 2 a^{-1}\tau_2,\qquad 
   W_{\epsilon'} \cap \left(t+(\operatorname{Id}_{d-1} +A(t)^T)W_{\epsilon_0}^a \right)\not=\emptyset.
\]
If we choose $R'>2\tau_2$, the  first inequality gives 
\begin{equation}
  \label{eq:6}
  a< \frac{2\tau_2}{R'}<1,
\end{equation}
and, since $a<1$, setting
$\lambda_{\max}=\max\{\lambda_2,\ldots,\lambda_d\}$, clearly
\begin{equation}
  \label{eq:3}
 W_{\epsilon_0}^a\subset  W_{\epsilon_0 a^{1-\lambda_{\max}}}.
\end{equation}
By the above inclusion, since $W_{\epsilon'} \cap
\left(t+(I_{d-1} +A(t)^T)W_{\epsilon_0}^a
\right)\not=\emptyset$, then  there exists $\xi\in W_{\epsilon_0
  a^{1-\lambda_{\max}}} $ such that  
$|t+\xi+A(t)^T\xi|<\epsilon'$.  Hence, triangle inequality,~(\ref{eq:5})
and ~(\ref{eq:6}) give
\begin{align*}
|t| & <  \epsilon'+(1+\lVert A(t)^T\rVert )|\xi| \leq \epsilon'+ (1+C |t|) 
a^{1-\lambda_{\max}}\epsilon_0  \\ & \leq \epsilon'+  (\frac{2\tau_2}{R'})^{1-\lambda_{\max}} (1+C |t|) \epsilon_0  \leq  2\epsilon'+ \frac{1}{2} |t|,
\end{align*}
where the last inequality holds true  provided that
\begin{equation}
  \label{eq:4}
  R'>2\tau_2\max\{ 1,(\frac{\epsilon_0}{\epsilon'})^{\frac{1}{1-\lambda_{\max}}},
  (2C\epsilon_0)^{\frac{1}{1-\lambda_{\max}}}\}. 
\end{equation}
Hence, if~\eqref{eq:9a} holds true with~$R'$ satisfying~\eqref{eq:4}, then 
\begin{subequations}
  \begin{align}
    a & <  \frac{2\tau_2}{R'}<1 \label{eq:6a}\\
   |t| & <  4\epsilon' 
  \label{eq:6b} \\
(\frac{2\tau_2}{R'})^{1-\lambda_{\max}} \epsilon_0 & < \min\{\epsilon' ,\frac{1}{2C}\} \label{eq:6c}.
  \end{align}
\end{subequations}
The condition~\eqref{eq:9b}  is equivalent to
\[
(a^{-1} \tau_1, a^{-1} \tau_2)\times
  (t+I_{d-1} +A(t)^T)W_{\epsilon_0}^a) \subset
(R,+\infty)\times W_{\epsilon} ,
\]
which is ensured by $a^{-1} \tau_1>R$ and, recalling~\eqref{eq:3}, by
$t+(I_{d-1} +A(t)^T)W_{\epsilon_0
  a^{1-\lambda_{\max}}}\subset W_\epsilon$.

By~\eqref{eq:6a} the first condition is satisfied if $\tau_1/R>
\frac{2\tau_2}{R'}$. Taking into account~\eqref{eq:4}, it is
sufficient to assume that 
\begin{equation}
R'>
2\tau_2\max\{1,(\frac{\epsilon_0}{\epsilon'})^{\frac{1}{1-\lambda_{\max}}},(2C\epsilon_0)^{\frac{1}{1-\lambda_{\max}}},\frac{R}{\tau_1}\}.  
\label{eq:8}
\end{equation}
To ensure that $t+(I_{d-1}+A(t)^T)W_{\epsilon_0a^{1-\lambda_{\max}}}\subset W_\epsilon$, note that, for all $\xi\in W_{\epsilon_0 a^{1-\lambda_{\max}}}$, conditions~(\ref{eq:5}),~(\ref{eq:6a}), and \eqref{eq:6b} give 
\begin{align*}
|t+(I_{d-1}+A(t)^T)\xi |& \leq |t| + (1+C|t|)|\xi|\leq|t|+(1+C|t|)a^{1-\lambda_{\max}} \epsilon_0 \\
& <4\epsilon'+(1+C 4\epsilon')(\frac{2\tau_2}{R'})^{1-\lambda_{\max}}\epsilon_0 \\
& \leq 4\epsilon'+ \epsilon'+2\epsilon' =7\epsilon',
\end{align*}
where the last inequality follows from~\eqref{eq:6c}. Hence, with the choice
$\epsilon'=\min\{1,\epsilon/7\}$ and $R'$ satisfying~(\ref{eq:8})
for all $\xi\in W_{\epsilon_0 a^{1-\lambda_{\max}}}$, 
\[
|t+(I_{d-1}
+A(t)^T)\xi |<\epsilon,
\]
so that~\eqref{eq:9b} holds true for all $h\in H$ satisfying~\eqref{eq:9a}.
\end{proof}

\begin{remark}
The proof does not make use of the fact that the shearlet group
$\mathcal S$ is abelian. The proof is based only on the following two
properties of $S$
\begin{enumerate}[label=(\alph*)]
\item a global smooth chart $t\mapsto s(t)$ from $\Bbb{R}^{d-1}$ onto
  $S$; 
\item for all $t\in\Bbb{R}^d$ the dual action of $s(t)$ is of the form
\[
(s(t)^{-1})^T =
  \begin{pmatrix}
  1 &      0^T\\
  t  &       B(t)              
  \end{pmatrix}
\]
where $\lVert B(t)\rVert\leq C_1+C_2 |t|$ for a suitable choice of
$C_1$ and $C_2$.
\end{enumerate}
\end{remark}

With the cone approximation property already established, the remaining condition is quite easy to check.

\begin{proposition}
If $0 < \lambda_{\min} \le \lambda_{\max}<1$, there
exists an open subset $\emptyset\neq V \Subset\mathcal{O}$ such that
the dual action of $H$ on the orbit $\mathcal{O}$ is $V$-microlocally admissible
in direction $(1,0,\ldots,0)\in S^{d-1}\cap\mathcal{O}$.
\end{proposition}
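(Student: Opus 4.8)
The plan is to verify the two requirements of Definition~\ref{defn:micro_regular} for a convenient choice of neighborhood $W_0$ and radius $R_0$, working in the same global chart $h=\pm h(t,1)^{-1}h(0,a)$ and with the same description~\eqref{eq:1} of the dual action used in the previous proposition. The first move is to observe that membership in $K_o$ already confines the parameters. Taking $W_0=\omega(W_{\epsilon'})$ with $\epsilon'\le 1$ fixed and $R_0$ large enough to satisfy~\eqref{eq:4}, the inclusion $C(\omega(W_{\epsilon'}),R_0)\subset\Omega((R_0/2,+\infty)\times W_{\epsilon'})$ shows that every $h\in K_o(W_0,V,R_0)$ satisfies the intersection condition~\eqref{eq:9a}. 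The estimates~\eqref{eq:6a} and~\eqref{eq:6b} then force $h$ to carry the $+$ sign and to obey $0<a<a_0:=2\tau_2/R_0<1$ together with $|t|<4\epsilon'$. Hence $K_o(W_0,V,R_0)$ is contained in the bounded box $\{(t,a):|t|<4\epsilon',\ 0<a<a_0\}$ inside the $+$ component of $H$.

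Next I would establish operator-norm estimates valid on this box. Writing $h=h(t,1)^{-1}h(0,a)$ with $h(0,a)=\mathrm{diag}(a,a^{\lambda_2},\ldots,a^{\lambda_d})$, submultiplicativity gives $\|h\|\le\|h(t,1)^{-1}\|\,\|h(0,a)\|$ and $\|h^{-1}\|\le\|h(0,a)^{-1}\|\,\|h(t,1)\|$. The factors $h(t,1)^{\pm1}$ are unipotent with entries that are polynomials in $t$ of degree at most $d-1$, so their norms are bounded by a constant on $|t|<4\epsilon'$; and for $0<a<1$ one has $\|h(0,a)\|=a^{\lambda_{\min}}$ and $\|h(0,a)^{-1}\|=a^{-1}$. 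This yields $\|h\|\le C_1 a^{\lambda_{\min}}$ and $\|h^{-1}\|\le C_2 a^{-1}$ with constants depending only on $H$. Inverting the first bound gives $a^{-1}\le C_1^{1/\lambda_{\min}}\|h\|^{-1/\lambda_{\min}}$, so $\|h^{-1}\|\le C_2 C_1^{1/\lambda_{\min}}\|h\|^{-1/\lambda_{\min}}$, which is condition~\ref{enu:NormOfInverseEstimateOnKo} with $\alpha_1=1/\lambda_{\min}$; this is precisely where $\lambda_{\min}>0$ enters.

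For the integrability condition~\ref{enu:NormIntegrability} I would first record the left Haar measure in the chart $(t,a)$. The group law reads $(t,a)(t',a')=(t+\delta_a t',aa')$ with $(\delta_a t')_i=a^{1-\lambda_i}t'_i$, so left translation has the constant Jacobian $a_0^{1+\sigma}$, where $\sigma=\sum_{i=2}^{d}(1-\lambda_i)$, and hence $\mathrm{d}h=a^{-(1+\sigma)}\,\mathrm{d}t\,\mathrm{d}a$. Combining the containment of $K_o(W_0,V,R_0)$ in the box with the bound $\|h\|\le C_1 a^{\lambda_{\min}}$, the integral factors as a finite integral over $\{|t|<4\epsilon'\}$ times $\int_0^{a_0}a^{\lambda_{\min}\alpha_2-1-\sigma}\,\mathrm{d}a$, which converges exactly when $\alpha_2>\sigma/\lambda_{\min}$. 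Since $0<\lambda_i<1$ forces $\sigma>0$, a positive such $\alpha_2$ exists, and both conditions of Definition~\ref{defn:micro_regular} hold.

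The genuinely routine steps are the submultiplicative norm bounds and the change of variables for the Haar density. The one point deserving care is the reduction in the first paragraph: rather than reprove that $K_o$ confines $(t,a)$ to a bounded box, I would quote the geometric estimates already extracted in the cone approximation argument, since they contain exactly this information. The convergence of the final integral is where both hypotheses act together: $\lambda_{\max}<1$ supplies $\sigma>0$ (and, upstream, the boundedness of $t$), while $\lambda_{\min}>0$ keeps $\alpha_1$ finite and the admissible range of $\alpha_2$ nonempty.
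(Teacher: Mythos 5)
Your argument is correct, and the norm estimate $\|h^{-1}\|\le C''\|h\|^{-1/\lambda_{\min}}$ is obtained exactly as in the paper: confine the parameters to a region $|t|\lesssim 1$, $0<a<1$, bound $\|h(t,1)^{\pm1}\|$ by a constant there, and use $\|h(0,a)\|=a^{\lambda_{\min}}$, $\|h(0,a)^{-1}\|=a^{-1}$. The difference lies in how the two conditions of Definition~\ref{defn:micro_regular} are organized. The paper does not verify the integrability condition directly at all: since $\lambda_{\max}<1$ guarantees the cone approximation property, it invokes \cite[Lemma 4.7]{FeFuVo} to deduce condition~(2) from condition~(1), and also uses the cone approximation property to shrink $K_o$ to $K_i$ before proving the norm estimate. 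You instead keep $K_o$, confine it to the box $\{|t|<4\epsilon',\,0<a<a_0\}$ by recycling the intersection estimates \eqref{eq:6a}--\eqref{eq:6b} from the previous proposition, and then compute $\int_{K_o}\|h\|^{\alpha_2}\,dh$ explicitly against the Haar density $a^{-(1+\sigma)}\,dt\,da$. This buys a self-contained proof of condition~(2) (no appeal to the external lemma) at the cost of the Haar measure computation; the paper's route is shorter but leans on \cite{FeFuVo}. One small inaccuracy in your version: for a general shearing subgroup the composition in the first-row coordinates is not $t+\delta_a t'$ — the $S$-part of the product picks up quadratic corrections coming from $X_iX_j=\sum_{k>\max(i,j)}d_{i,j,k}X_k$, so the group law is $t\cdot_S\delta_a t'$ with $t\cdot_S u=t+(I+M(t))u$ for a strictly lower triangular $M(t)$. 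Since $\det(I+M(t))=1$, the Jacobian of left translation is still the constant $a_0^{1+\sigma}$ and your Haar density $a^{-(1+\sigma)}\,dt\,da$ is correct, but the justification should go through this unimodularity rather than literal additivity. With that repaired, the convergence criterion $\alpha_2>\sigma/\lambda_{\min}$ with $\sigma=\sum_{i\ge2}(1-\lambda_i)>0$ is right, and both hypotheses enter exactly where you say they do.
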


\begin{proof}
We retain the notations from the previous proof, as well as the open set
\[
 V= \Omega(\, (\tau_1,\tau_2)\times W_{\epsilon_0}\, )~,
\] with $\tau_1 < 1 < \tau_2$. 
Since we assume $\lambda_{\max}< 1$, the cone approximation property holds, and then condition (2) of Definition~\ref{defn:micro_regular} follows from condition (1) by \cite[Lemma 4.7]{FeFuVo}. 
In addition, the cone approximation property allows to replace $K_o$ in that condition by the smaller set $K_i$. In short, it remains to prove the existence of $\alpha>0$
and $C''>0$ such that 
\[
\|h^{-1}\|\le C'' \|h\|^{-\alpha}
\]
holds for all $h\in K_{i}(\omega(W_{\epsilon}),V,R)$, for suitable $\epsilon, R>0$. In the following computations, we let $\epsilon=1$ and $R > 2$.
Now assume that $h = \pm h(t,1)^{-1} h(0,a) \in K_i(\omega(W_{\epsilon}),V,R)$, which means that $h^{-T} V \subset C(\omega(W_{\epsilon}),R)$. This implies in particular that 
\begin{eqnarray*}
 h^{-T} \left( \begin{array}{c} 1 \\ 0 \\ \vdots \\ 0 \end{array} \right)&=&\pm h(t,1)^{T} h(0,a)^{-1} \left( \begin{array}{c} 1 \\ 0 \\ \vdots \\ 0 \end{array} \right) 
 = \pm h(t,1)^T \left( \begin{array}{c} a^{-1} \\ 0 \\ \vdots \\ 0 \end{array} \right)\\
&=& \pm a^{-1} \left( \begin{array}{c} 1 \\ t \end{array} \right) \in C(\omega(W_{\epsilon}),R)\,.
\end{eqnarray*} This implies that the sign is in fact positive. Furthermore, we have $|t|\le\epsilon=1$, and then
\[
2 a^{-1} \ge \left|h^{-T}  \left( \begin{array}{c} 1 \\ 0 \\ \vdots \\ 0 \end{array} \right)\right|>R>2\,,
\]
which implies $a<1$. {By using the fact $\|h(t,1) \|\leq C(1+|t|) \le 2C \le \sqrt{2}(1+(1+C)|t|)\le \sqrt{2}(2+C)$, where $C$ was the constant from (\ref{eq:5}), we can now estimate 
\[
 \| h^{-1} \| = \| h(0,a)^{-1} h(t,1) \| \le \|h(0,a)^{-1} \| \|h(t,1) \| \le \sqrt{2}(2+C) a^{-1},
\] where we used $a<1$ and $\lambda_{\max} \le 1$ to estimate the norm of $h(0,a)^{-1}$.} In addition,
\[
 \| h \| = \| h(t,1)^{-1} h(0,a) \| \le \| h(t,1)^{-1} \| \| h(0,a) \|  \le C' a^{\lambda_{\min}}~.
\] Here we used that the set $\{ h(t,1): |t| \le 1 \} \subset H$ is compact to uniformly estimate the norm of the inverses by a suitable $C'$, and $a < 1$ to estimate the norm of $h(0,a)$. 
But these estimates combined yield
\[
 \|h^{-1} \| \le \sqrt{2}(2+C) a^{-1} \le\sqrt{2}(2+C) (C')^{1/\lambda_{\min}}  \| h \|^{-1/\lambda_{\min}}~.
\] Since we assume that $\lambda_{\min}>0$, the proof is finished.
\end{proof}

\section{Embeddings into the symplectic group}

From the analytical point of view, we saw that shearlet dilation groups are a useful tool for the characterization of the wavefront set of distributions. On the other hand, from the algebraic and geometrical points of view, these groups and the associated generalized wavelet representation exhibit an interesting link with the symplectic group and the metaplectic representation. More precisely, in this section we show that the positive part $DS$ of any shearlet dilation group $DS\cup (-DS)$ may be imbedded into the symplectic group. Note that the full group $DS\cup (-DS)$ cannot be expected to be imbedded into $Sp(d,\Bbb{R})$ \cite[Theorem 3.5]{2014-differentfaces}.  Moreover, we prove that the wavelet representation is unitarily equivalent to the metaplectic representation, provided that they are restricted to a suitable subspace of $L^2(\Bbb{R}^d)$. In fact, a much more general class of groups is allowed, see Theorem~\ref{thm:embedding}.

The relevance of the symplectic group and of the metaplectic representation in this context has already been shown in several works \cite{2013-demari-devito,2013-alberti-balletti-demari-devito,2014-alberti-demari-devito-manto,2014-differentfaces}. In particular, the argument given here generalizes \cite{2014-differentfaces}.

Let $T(d,\Bbb{R})_{+}$ denote the subgroup of ${\rm GL}(d,\Bbb{R})$ consisting of the upper triangular matrices with positive entry in position $(1,1)$, namely
\[
T(d,\Bbb{R})_{+}=\{h\in {\rm GL}(d,\Bbb{R}): \text{$h_{1,1}>0$ and $h_{i,j}=0$ for every $i>j$}\}.
\]
We consider the following subspace of $L^{2}(\Bbb{R}^{d})$:
\[
\mathcal{H}=\{f\in L^{2}(\Bbb{R}^{d}): {\rm supp}\hat{f}\subseteq\Theta_{L}\},\,\text{ where }\;\Theta_{L}=\{\xi\in\Bbb{R}^{d}:\xi_{1}\le0\}.
\]
The main result of this section reads as follows.
\begin{theorem}\label{thm:embedding}
Take $H<T(d,\Bbb{R})_{+}$. The group $G=\Bbb{R}^d\rtimes H$ may be embedded into the symplectic group, namely there exists a group embedding $\phi\colon G\to Sp(d,\Bbb{R})$. Moreover, the restriction to $\mathcal{H}$ of the quasi-regular representation $\pi$ defined in (\ref{eqn:def_quasireg}) is unitarily equivalent to $\mu \circ \phi$ restricted to $\mathcal{H}$, where $\mu$ is the metaplectic representation of $Sp(d,\Bbb{R})$.
\end{theorem}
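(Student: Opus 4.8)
The plan is to produce the embedding $\phi$ and the intertwining unitary $U$ \emph{simultaneously}, by transporting the quasi-regular representation to the Fourier side and then applying a nonlinear change of variables that converts it into a metaplectic representation. On the Fourier side one computes $\widehat{\pi(b,h)f}(\xi) = |\det h|^{1/2} e^{-2\pi i \langle b,\xi\rangle}\widehat f(h^{T}\xi)$. Because $H < T(d,\Bbb{R})_{+}$, the matrix $h^{T}$ is lower triangular with $(h^{T}\xi)_{1} = h_{1,1}\xi_{1}$ and $h_{1,1}>0$, so the half-space $\Theta_{L} = \{\xi_{1}\le 0\}$ is invariant under the dual action; together with the fact that modulation preserves $\operatorname{supp}\widehat f$, this shows $\mathcal H$ is $\pi$-invariant, so the restriction $\pi|_{\mathcal H}$ makes sense. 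The only feature of this action that is \emph{not} already metaplectic is the linear phase $\langle b,\xi\rangle$: metaplectic ``chirps'' carry quadratic phases. The whole construction is engineered to turn this linear phase into a quadratic one.

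First I would introduce the change of variables $\xi(x) = (-x_1^2/2,\, x_1 x_2,\ldots, x_1 x_d)$, which is a diffeomorphism of $\{x_1>0\}$ onto $\{\xi_1<0\}$ with Jacobian determinant $x_1^{d}$, and set $(Uf)(x) = x_1^{d/2}\,\widehat f(\xi(x))$. By Plancherel and the Jacobian computation, $U$ is a unitary from $\mathcal H$ onto $L^{2}(\{x_1>0\})$. Two things happen under this substitution. The modulation becomes a genuine chirp, $e^{-2\pi i\langle b,\xi(x)\rangle} = e^{-i\pi\langle Q_b x, x\rangle}$, where $Q_b$ is the symmetric matrix depending linearly on $b$ with $(Q_b)_{1,1} = -b_1$ and $(Q_b)_{1,j} = (Q_b)_{j,1} = b_j$ for $j\ge 2$ and zero otherwise. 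And the triangular dual action linearizes: there is a lower-triangular $A(h)\in {\rm GL}(d,\Bbb{R})$, with $(A(h))_{1,1} = \sqrt{h_{1,1}}$ and $(A(h))_{j,j} = h_{j,j}/\sqrt{h_{1,1}}$, such that $\xi(A(h)x) = h^{T}\xi(x)$. This linearization is precisely what the upper-triangular hypothesis buys us, and it is the conceptual heart of the argument.

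Combining the two effects, a direct computation gives $(U\pi(b,h)U^{-1})F(x) = |\det A(h)|^{1/2}\, e^{-i\pi\langle Q_b x, x\rangle}\,F(A(h)x)$, using the identity $|\det h|^{1/2}h_{1,1}^{-d/4} = |\det A(h)|^{1/2}$. The right-hand side is exactly the metaplectic operator attached to $\phi(b,h) := \begin{pmatrix} A(h)^{-1} & Q_b A(h)^{T}\\ 0 & A(h)^{T}\end{pmatrix}$, a block upper-triangular matrix that is symplectic because $A(h)^{-T}A(h)^{T}=I_d$ and $A(h)Q_b A(h)^{T}$ is symmetric. Thus $U$ intertwines $\pi|_{\mathcal H}$ with the restriction of $\mu\circ\phi$ to the invariant subspace $U(\mathcal H) = L^{2}(\{x_1>0\})$ (invariant since $A(h)$ preserves $\{x_1>0\}$ and chirps preserve support); identifying this subspace with $\mathcal H$ via $U$ yields the asserted unitary equivalence.

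It remains to check that $\phi$ is an injective group homomorphism. Injectivity is immediate: $\phi(b,h)=I_{2d}$ forces $A(h)=I_d$, hence $h=I_d$ (the entries of $A(h)$ determine $h$), and then $Q_b=0$, hence $b=0$ (as $b\mapsto Q_b$ is injective). The homomorphism property is where I expect the real work, and it is the main obstacle: one must verify $\phi(b_1,h_1)\phi(b_2,h_2) = \phi(b_1+h_1 b_2, h_1 h_2)$ under the semidirect-product law $(b_1,h_1)(b_2,h_2) = (b_1+h_1 b_2, h_1 h_2)$; here one has to track that $h\mapsto A(h)$ is an anti-homomorphism (so that $h\mapsto A(h)^{-1}$ is a homomorphism) and that the cross-term in the block product reproduces $Q_{b_1+h_1 b_2}$. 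An efficient shortcut is to deduce the relation up to sign directly from the intertwining $U\pi U^{-1}=\mu\circ\phi$ together with the fact that $\pi$ is a representation, and then remove the metaplectic sign ambiguity using the explicit matrix form; pinning down this sign/cocycle is the one genuinely delicate point of the proof.
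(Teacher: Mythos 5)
Your strategy is essentially the paper's: push $\pi$ to the Fourier side, apply a quadratic substitution of the frequency half-space so that the linear phases $e^{-2\pi i\langle b,\xi\rangle}$ become chirps and the triangular dual action linearizes, and read off a block-triangular symplectic embedding. Your $\xi(x)$ and $A(h)$ are, up to harmless normalizations, the paper's $Q^{-1}$ and $\rho(h)^{-1}=h^T/\sqrt{h_{1,1}}$, and your identity $|\det h|^{1/2}h_{1,1}^{-d/4}=|\det A(h)|^{1/2}$ is (\ref{eq:almost there 2}). Two things need repair, though. First, the matrix you attach to the operator $F\mapsto|\det A(h)|^{1/2}e^{-i\pi\langle Q_bx,x\rangle}F(A(h)x)$ sits on the wrong side of the parabolic: in the realization of $\mu$ that the paper fixes in (\ref{eq:meta}), a \emph{pointwise} chirp corresponds to the block lower-triangular element $g(\sigma,k)$ with the symmetric matrix in the lower-left block, whereas your $\left(\begin{smallmatrix}A(h)^{-1}&Q_bA(h)^T\\ 0&A(h)^T\end{smallmatrix}\right)$ puts it in the upper-right and hence acts by a Fourier-multiplier chirp, which is not the operator you computed. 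The correct image is $g(-Q_b,A(h)^{-1})=\left(\begin{smallmatrix}A(h)^{-1}&0\\ -Q_bA(h)^{-1}&A(h)^T\end{smallmatrix}\right)$, i.e.\ precisely the paper's $\phi(b,h)=g(\sigma_b,\rho(h))$ up to your choice of quadratic map.

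Second, and more seriously, you announce the homomorphism property of $\phi$ as ``the main obstacle'' and leave it unproven. That property is exactly the computational core of the paper's argument: Lemma~\ref{lem:rho} (the identity $h\sigma_bh^T=h_{1,1}\sigma_{hb}$, equivalently $\rho(h)^{-T}\sigma_b\rho(h)^{-1}=\sigma_{hb}$) fed into the product law (\ref{eq:product in E}) in Proposition~\ref{prop:phi}. A proof cannot end by naming its key step as an open problem, so you must close this. Fortunately your setup makes it short: from $\xi\circ A(h)=h^T\circ\xi$ on $\{x_1>0\}$ you get $A(h_1h_2)=A(h_2)A(h_1)$, so $h\mapsto A(h)^{-1}$ is a homomorphism; and polarizing $\langle b,\xi(A(h)x)\rangle=\langle hb,\xi(x)\rangle$ against $\langle b,\xi(y)\rangle=\tfrac12\langle Q_by,y\rangle$ yields $A(h)^TQ_bA(h)=Q_{hb}$, which is the cocycle identity producing the cross term $Q_{b_1+h_1b_2}$ in the block product. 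You should also record that $A(h)$ exists, is lower triangular, and has positive $(1,1)$ entry $\sqrt{h_{1,1}}$ --- this is where the hypothesis $H<T(d,\Bbb{R})_+$ enters, and it is what makes $\{x_1>0\}$ invariant. Finally, the ``metaplectic sign ambiguity'' you worry about does not actually arise: as in the paper, one works with the explicit formula (\ref{eq:meta}) for $\mu$ on the image subgroup, where $|\det\rho(h)|^{-1/2}$ is unambiguous, so no lifting to the double cover is needed.
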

The rest of this section is devoted to the proof of this theorem. The embedding $\phi$, the subgroup $\phi(G)$, as well as the intertwining operator between the quasi-regular representation and the metaplectic representation will be explicitly constructed.

First, we construct the subgroup $\phi(G)<Sp(d,\Bbb{R})$ and the map $\phi$. The vectorial part of $G=\Bbb{R}^d\rtimes H$ will correspond to the subspace of the $d$-dimensional symmetric matrices given by
\[
\Sigma:=\{\sigma_{b}:=\begin{pmatrix}b_{1} & b_{2}/2 & \cdots & b_{d}/2\\
b_{2}/2\\
\vdots  & & \mathbf{0} \\
b_{d}/2
\end{pmatrix}:\; b\in\Bbb{R}^{d}\}.
\]
We shall need the following preliminary result concerning the  map
\begin{equation}
\rho\colon T(d,\Bbb{R})_{+}\to GL(d,\Bbb{R}),\qquad h\mapsto\sqrt{h_{1,1}}\,h^{-T}.\label{eq:def rho}
\end{equation}

\begin{lemma}
\label{lem:rho}The map $\rho$ is a group homomorphism and for all $b\in\Bbb{R}^{d}$ and $h\in T(d,\Bbb{R})_{+}$ there holds
\begin{equation}
\rho(h)^{-T}\sigma_{b}\rho(h)^{-1}=\sigma_{hb}.\label{eq:rho}
\end{equation}
\end{lemma}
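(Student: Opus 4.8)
The plan is to reduce both assertions to two elementary structural facts about matrices in $T(d,\Bbb{R})_{+}$: first, that the $(1,1)$-entry is multiplicative, i.e. $(hg)_{1,1} = h_{1,1} g_{1,1}$ for upper triangular $h,g$, and second, that $h e_1 = h_{1,1} e_1$, since the first column of an upper triangular matrix is $h_{1,1} e_1$. Both facts follow at once from upper-triangularity together with the positivity $h_{1,1}>0$, which also guarantees that the square roots appearing in $\rho$ are well defined and multiplicative. I would note at the outset that $\rho$ indeed maps into $GL(d,\Bbb{R})$, since $h$ is invertible and $h_{1,1}>0$, and that $T(d,\Bbb{R})_{+}$ is closed under products and inverses, so the homomorphism statement is meaningful.

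For the homomorphism property I would compute $\rho(hg) = \sqrt{(hg)_{1,1}}\,(hg)^{-T}$ directly. The $(1,1)$-entry factorizes as $(hg)_{1,1}=h_{1,1}g_{1,1}$, so by positivity $\sqrt{(hg)_{1,1}} = \sqrt{h_{1,1}}\sqrt{g_{1,1}}$; on the other hand $(hg)^{-T} = (g^{-1}h^{-1})^T = h^{-T}g^{-T}$. Recombining the two scalar factors with the two matrix factors yields $\rho(hg) = (\sqrt{h_{1,1}}\,h^{-T})(\sqrt{g_{1,1}}\,g^{-T}) = \rho(h)\rho(g)$.

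The conjugation identity (\ref{eq:rho}) is where the one real idea enters. First I would record the outer-product form $\sigma_b = \tfrac12(e_1 b^T + b e_1^T)$, which one checks entrywise reproduces the prescribed first row and column together with a vanishing lower-right $(d-1)\times(d-1)$ block. Next, from $\rho(h)=\sqrt{h_{1,1}}\,h^{-T}$ I would read off $\rho(h)^{-1} = h_{1,1}^{-1/2} h^T$ and $\rho(h)^{-T} = h_{1,1}^{-1/2} h$, so that the left-hand side collapses to $h_{1,1}^{-1}\, h\,\sigma_b\,h^T$. Inserting the outer-product form and using $h e_1 = h_{1,1} e_1$ gives
\begin{align*}
h\,\sigma_b\,h^T &= \tfrac12\bigl( (h e_1)(h b)^T + (h b)(h e_1)^T \bigr) \\
&= \tfrac{h_{1,1}}{2}\bigl( e_1 (hb)^T + (hb) e_1^T \bigr) = h_{1,1}\,\sigma_{hb},
\end{align*}
and dividing by $h_{1,1}$ yields exactly $\sigma_{hb}$.

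I do not expect a genuine obstacle here: the computation is short once the bookkeeping is set up. The only mild care needed is in tracking the transpose/inverse relations and the square-root prefactors, and the single nontrivial insight is the outer-product representation of $\sigma_b$, which converts the conjugation into a transparent application of $h e_1 = h_{1,1} e_1$. It is worth emphasizing that this last relation is precisely the feature of $T(d,\Bbb{R})_{+}$ responsible for the lower-right block of $\sigma_b$ remaining zero after conjugation, so that $h\,\sigma_b\,h^T$ is again of the form $\sigma_{\,\cdot\,}$ and the map $b \mapsto \sigma_b$ intertwines the action of $H$ with matrix multiplication.
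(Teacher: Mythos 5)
Your proof is correct. For the homomorphism property you spell out the computation that the paper dismisses as trivial ($(hg)_{1,1}=h_{1,1}g_{1,1}$ and $(hg)^{-T}=h^{-T}g^{-T}$), which is fine. For the conjugation identity both you and the paper reduce to the same key assertion $h\sigma_b h^T = h_{1,1}\sigma_{hb}$, but you verify it by a genuinely different and cleaner route: the paper partitions $h$ into its rows and grinds through the block multiplication $h\sigma_b$ and then $(h\sigma_b)h^T$ entry by entry, using symmetry of the scalar products $h_i'b'$ to reassemble the result into $h_{1,1}\sigma_{hb}$, whereas you observe the rank-two outer-product representation $\sigma_b = \tfrac12(e_1 b^T + b e_1^T)$ and the eigenvector relation $he_1 = h_{1,1}e_1$, after which the identity falls out in one line. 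Your version buys transparency: it isolates exactly which structural feature of $T(d,\Bbb{R})_{+}$ is responsible for the form of $\sigma_b$ being preserved under conjugation, and it avoids the bookkeeping of the explicit block computation. The paper's version requires no preliminary observation about $\sigma_b$ but pays for it with a longer calculation. Both are complete; no gap in yours.
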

\begin{proof}
The first part is trivial, since the matrices in $H$ are upper triangular with $h_{1,1}>0$. The second part can be proven as follows.
Fix $b\in\Bbb{R}^{d}$ and $h\in T(d,\Bbb{R})_{+}$. The assertion is equivalent to
\[
h\sigma_{b}\,h^T=h_{1,1}\sigma_{hb}.
\]
Write for $i=2,\dots,d$
\[
h=\begin{pmatrix}h_{1}\\
h_{2}\\
\vdots\\
h_{d}
\end{pmatrix},\quad h_{1}=[h_{1,1}\; h_{1}'],\quad h_{i}=[0\; h_{i}'],\quad b'=\begin{pmatrix}b_{2}\\
\vdots\\
b_{d}
\end{pmatrix}.
\]
We have
\[
h\sigma_{b}=\begin{pmatrix}h_{1,1} & h_{1}'\\
0 & h_{2}'\\
\vdots & \vdots\\
0 & h_{d}'
\end{pmatrix}\begin{pmatrix}b_{1} & b'^T/2\\
b'/2 & \mathbf{0} 
\end{pmatrix}=\begin{pmatrix}h_{1,1}b_{1}+h_{1}'b'/2 & \hspace*{4mm} h_{1,1}b'^T/2\\
h_{2}'b'/2 & \mathbf{0} \\
\vdots & \vdots \\
h_{d}'b'/2 & \mathbf{0} 
\end{pmatrix},
\]
whence
\[
\begin{split}&h\sigma_{b}\,h^T  =\begin{pmatrix}h_{1,1}b_{1}+h_{1}'b'/2 & \hspace*{4mm} h_{1,1}b'^T/2\\
h_{2}'b'/2 & \mathbf{0}\\
\vdots & \vdots \\
h_{d}'b'/2 & \mathbf{0}
\end{pmatrix}\begin{pmatrix}h_{1,1} & 0 & \cdots & 0\\
h_{1}'^T & h_{2}'^T & \cdots & h_{d}'^T
\end{pmatrix}\\
 & =\begin{pmatrix}h_{1,1}(h_{1,1}b_{1}+h_{1}'b'/2)+h_{1,1}b'^Th_{1}'^T/2 & \hspace*{2mm} h_{1,1}b'^Th_{2}'^T/2 & \cdots & \hspace*{2mm} h_{1,1}b'^Th_{d}'^T/2\\
h_{1,1}h_{2}'b'/2\\
\vdots & & \mathbf{0} \\
h_{1,1}h_{d}'b'/2
\end{pmatrix}.
\end{split}
\]
Therefore, since $b'^Th_{i}'^T=h_{i}'b'$ for every $i$ and
$h_{i}'b'=h_{i}b$ for $i\ge2$, we obtain 
\[
\begin{split}
h\sigma_{b}\,h^T  &=h_{1,1}\begin{pmatrix}h_{1,1}b_{1}+h_{1}'b' & \hspace*{2mm}h_{2}'b'/2 & \cdots &\hspace*{2mm} h_{d}'b'/2\\
h_{2}'b'/2\\
\vdots& & \mathbf{0} \\
h_{d}'b'/2
\end{pmatrix}\\
  &=h_{1,1}\begin{pmatrix}h_{1}b_{1} & h_{2}b/2 & \cdots & h_{d}b/2\\
h_{2}b/2\\
\vdots & & \mathbf{0}\\
h_{d}b/2
\end{pmatrix},
\end{split}
\]
whence $h\sigma_{b}\,h^T = h_{1,1}\sigma_{hb}$, as desired.
\end{proof}
We use the notation
\[
g(\sigma,h)=\begin{pmatrix}h\\
\sigma h & h^{-T}
\end{pmatrix}\in Sp(d,\Bbb{R}),\qquad\sigma\in Sym(d,\Bbb{R}),h\in GL(d,\Bbb{R}).
\]
The product law is
\begin{equation}
g(\sigma_{1},h_{1})g(\sigma_{2},h_{2})=g(\sigma_{1}+\,h_{1}^{-T}\sigma_{2}h_{1}^{-1},h_{1}h_{2}).\label{eq:product in E}
\end{equation}
In the following result we show that $G=\Bbb{R}^{d}\rtimes H$ is isomorphic to the subgroup
of $Sp(d,\Bbb{R})$ given by $\Sigma\rtimes\rho(H):=g(\Sigma,\rho(H))$. This proves the first part of Theorem~\ref{thm:embedding}.
\begin{proposition}
\label{prop:phi}Take $H< T(d,\Bbb{R})_{+}$. Then the map
\[
\phi\colon\Bbb{R}_{d}\rtimes H\to g(\Sigma,\rho(H))<Sp(d,\Bbb{R}),\quad(b,h)\mapsto g(\sigma_{b},\rho(h))
\]
is a group isomorphism.
\end{proposition}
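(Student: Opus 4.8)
The plan is to show that $\phi$ is a well-defined bijective group homomorphism onto $g(\Sigma,\rho(H))$. The key algebraic input is Lemma~\ref{lem:rho}, which has been engineered precisely so that the semidirect product structure of $G$ matches the product law~(\ref{eq:product in E}) in $Sp(d,\Bbb{R})$ after passing through $\rho$.

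First I would verify the homomorphism property by a direct computation. Take $(b_1,h_1),(b_2,h_2)\in\Bbb{R}^d\rtimes H$; their product in $G$ is $(b_1+h_1 b_2,\,h_1 h_2)$, using the semidirect product law recorded in Subsection~\ref{subsec:2.1}. Applying $\phi$ gives $g(\sigma_{b_1+h_1 b_2},\,\rho(h_1 h_2))$. On the other hand, $\phi(b_1,h_1)\phi(b_2,h_2)=g(\sigma_{b_1},\rho(h_1))\,g(\sigma_{b_2},\rho(h_2))$, which by~(\ref{eq:product in E}) equals
\[
g\bigl(\sigma_{b_1}+\rho(h_1)^{-T}\sigma_{b_2}\rho(h_1)^{-1},\,\rho(h_1)\rho(h_2)\bigr).
\]
The second slots agree because $\rho$ is a homomorphism (Lemma~\ref{lem:rho}). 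For the first slots, the identity~(\ref{eq:rho}) of Lemma~\ref{lem:rho} gives $\rho(h_1)^{-T}\sigma_{b_2}\rho(h_1)^{-1}=\sigma_{h_1 b_2}$, and since $b\mapsto\sigma_b$ is linear, $\sigma_{b_1}+\sigma_{h_1 b_2}=\sigma_{b_1+h_1 b_2}$. Thus the two sides coincide and $\phi$ is a homomorphism. I would also check that the image lands in $Sp(d,\Bbb{R})$: this is immediate since $g(\sigma,h)\in Sp(d,\Bbb{R})$ for any symmetric $\sigma$ and any invertible $h$, and $\sigma_b$ is symmetric while $\rho(h)\in GL(d,\Bbb{R})$.

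Next I would establish that $\phi$ is a bijection onto $g(\Sigma,\rho(H))$. Surjectivity is clear from the definition, since every element of $g(\Sigma,\rho(H))$ is of the form $g(\sigma_b,\rho(h))$ for some $b\in\Bbb{R}^d$ and $h\in H$. For injectivity, suppose $\phi(b,h)=g(\sigma_b,\rho(h))$ equals the identity $g(0,I_d)$. Then $\sigma_b=0$, and since $b\mapsto\sigma_b$ is an injective linear map (its value determines $b_1$ in the $(1,1)$ entry and each $b_i$, $i\ge2$, in the first row/column), we get $b=0$; likewise $\rho(h)=I_d$, and because $\rho$ is injective on $T(d,\Bbb{R})_+$ — the relation $\sqrt{h_{1,1}}\,h^{-T}=I_d$ forces $h^{T}=\sqrt{h_{1,1}}\,I_d$, whose only upper-triangular solution with positive $(1,1)$ entry is $h=I_d$ — we conclude $h=I_d$. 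Combining injectivity and surjectivity with the homomorphism property yields that $\phi$ is a group isomorphism onto $g(\Sigma,\rho(H))$.

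I do not expect a serious obstacle here: all the conceptual work has been absorbed into Lemma~\ref{lem:rho}, and what remains is the bookkeeping of matching product laws and confirming injectivity of the two linear/homomorphic ingredients $b\mapsto\sigma_b$ and $\rho$. The only point requiring mild care is the injectivity of $\rho$, where one must use that matrices in $T(d,\Bbb{R})_+$ are upper triangular with positive leading entry to rule out spurious scalar factors; this is precisely what pins down $\sqrt{h_{1,1}}=1$ and hence $h=I_d$.
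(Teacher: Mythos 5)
Your proof is correct and follows essentially the same route as the paper: the homomorphism property is verified by exactly the same computation, combining the product law for $g(\sigma,h)$ with Lemma~\ref{lem:rho} and the linearity of $b\mapsto\sigma_b$. The only difference is that you additionally spell out surjectivity onto $g(\Sigma,\rho(H))$ and the injectivity of $b\mapsto\sigma_b$ and of $\rho$ (both correctly), whereas the paper treats bijectivity as immediate from the definitions and only records that $g(\Sigma,\rho(H))$ being a subgroup follows a posteriori.
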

It is worth mentioning that Lemma 2.3 in \cite{2014-differentfaces} immediately follows from
this result.
\begin{proof}Recall that the product in $\Bbb{R}^{d}\rtimes H$
is defined by
\[
(b_{1},h_{1})(b_{2},h_{2})=(b_{1}+h_{1}b_{2},h_{1}h_{2}),\quad b_{i}\in\Bbb{R}^{d},h_{i}\in H.
\]
By definition of $\phi$ and using (\ref{eq:product in E}) there holds
\begin{eqnarray}\phi(b_{1},h_{1})\phi(b_{2},h_{2}) & =&g(\sigma_{b_{1}},\rho(h_{1}))g(\sigma_{b_{2}},\rho(h_{2}))\nonumber\\
 & =&g(\sigma_{b_{1}}+\,\rho(h_{1})^{-T}\sigma_{b_{2}}\rho(h_{1})^{-1},\rho(h_{1})\rho(h_{2})).\nonumber
\end{eqnarray}
Therefore, Lemma~\ref{lem:rho} gives
\begin{eqnarray}\phi(b_{1},h_{1})\phi(b_{2},h_{2}) & =&g(\sigma_{b_{1}}+\sigma_{h_{1}b_{2}},\rho(h_{1}h_{2}))\nonumber\\
 & =&g(\sigma_{b_{1}+h_{1}b_{2}},\rho(h_{1}h_{2}))\nonumber\\
 & =&\phi((b_{1},h_{1})(b_{2},h_{2})),\nonumber
\end{eqnarray}
as desired. Note that the fact that $g(\Sigma,\rho(H))$ is a subgroup
follows a posteriori.
\end{proof}
Intertwining the quasi-regular representation $\pi$, given in (\ref{eqn:def_quasireg}), with the Fourier transform $\mathcal{F}\colon\mathcal{H}\to L^{2}(\Theta_{L})$ we obtain the representation $\hat{\pi}(b,h):=\mathcal{F}\pi(b,h)\mathcal{F}^{-1}$
on $L^{2}(\Theta_{L})$ given by
\[
\hat{\pi}(b,h)\hat{f}(\xi)=|\det h\,|^{1/2}e^{-2\pi i\langle b,\xi\rangle}\hat{f}(h^T\xi),\qquad\hat{f}\in L^{2}(\Theta_{L}).
\]
The metaplectic representation restricted to $\Sigma\rtimes\rho(H)$ takes the form
\begin{equation}
\mu(\phi(b,h))\hat{f}(\xi)=|\det\rho(h)\,|^{-1/2}e^{\pi i\langle\sigma_{b}\xi,\xi\rangle}\hat{f}(\rho(h)^{-1}\xi),\qquad\hat{f}\in L^{2}(\Theta_{L}).\label{eq:meta}
\end{equation}
We now show that $\hat{\pi}$ and $\mu$ are unitarily equivalent, which concludes the proof of Theorem~\ref{thm:embedding}. The intertwining operator is given by
\[
\Psi\colon L^{2}(\Theta_{L})\to L^{2}(\Theta_{L}),\quad\Psi\hat{f}(\xi)=|\det J_{Q^{-1}}(\xi)|^{1/2}\hat{f}(Q^{-1}(\xi)),
\]
where $Q\colon\Theta_{L}\to\Theta_{L}$ is defined by $Q(\xi)=-\frac{1}{2}\xi_{1}\xi$.
\begin{proposition}
Let $\phi$ be the group isomorphism given by Proposition~\ref{prop:phi}.
For every $(b,h)\in\Bbb{R}^{d}\rtimes H$ there holds
\[
\Psi\mu(\phi(b,h))\Psi^{-1}=\mathcal{F}\pi(b,h)\mathcal{F}^{-1}=\hat{\pi}(b,h).
\]
\end{proposition}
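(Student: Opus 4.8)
The plan is to verify the asserted equality in the equivalent form $\Psi\mu(\phi(b,h))=\hat\pi(b,h)\Psi$, which avoids any explicit description of $\Psi^{-1}$. Writing out both operators applied to $\hat f$ and evaluated at a point $\xi\in\Theta_L$, the identity splits into three independent checks: that $\hat f$ is evaluated at the same argument on both sides, that the scalar phases agree, and that the amplitudes (the products of the Jacobian and normalization factors) agree. I would dispatch these one at a time, the first two resting on two short algebraic identities and the third on differentiating one of them.

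First I would record the two identities that do all the work. For the phase, expanding the symmetric matrix $\sigma_b$ gives $\langle\sigma_b\xi,\xi\rangle=b_1\xi_1^2+\sum_{i\ge2}b_i\xi_1\xi_i=\xi_1\langle b,\xi\rangle$, and since $Q(\xi)=-\tfrac12\xi_1\xi$ this is exactly $-2\langle b,Q(\xi)\rangle$; hence $e^{\pi i\langle\sigma_b\xi,\xi\rangle}=e^{-2\pi i\langle b,Q(\xi)\rangle}$, so after the substitution $\xi\mapsto Q^{-1}(\xi)$ built into $\Psi$ the metaplectic quadratic phase becomes precisely the quasi-regular linear phase $e^{-2\pi i\langle b,\xi\rangle}$ of $\hat\pi(b,h)$. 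For the linear part, upper-triangularity of $h$ gives $(h^T\xi)_1=h_{1,1}\xi_1$, and since $\rho(h)^{-1}=h_{1,1}^{-1/2}h^T$ a one-line computation yields $Q(\rho(h)^{-1}\xi)=h^T Q(\xi)$. Thus $Q$ conjugates the metaplectic linear action $\xi\mapsto\rho(h)^{-1}\xi$ into the quasi-regular one $\xi\mapsto h^T\xi$, equivalently $Q^{-1}(h^T\xi)=\rho(h)^{-1}Q^{-1}(\xi)$, which is exactly what forces the arguments of $\hat f$ on the two sides to coincide.

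With the phase and the argument settled, only the amplitudes remain. I would differentiate the intertwining relation $Q^{-1}(h^T\xi)=\rho(h)^{-1}Q^{-1}(\xi)$ to obtain $J_{Q^{-1}}(h^T\xi)\,h^T=\rho(h)^{-1}J_{Q^{-1}}(\xi)$, and then take determinants to get $\det J_{Q^{-1}}(h^T\xi)\,\det h=(\det\rho(h))^{-1}\det J_{Q^{-1}}(\xi)$. Rearranged, this says precisely that the amplitude $|\det h|^{1/2}|\det J_{Q^{-1}}(h^T\xi)|^{1/2}$ on the $\hat\pi(b,h)\Psi$ side equals the amplitude $|\det\rho(h)|^{-1/2}|\det J_{Q^{-1}}(\xi)|^{1/2}$ on the $\Psi\mu(\phi(b,h))$ side. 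Assembling the three matches gives $\Psi\mu(\phi(b,h))=\hat\pi(b,h)\Psi$, and hence the proposition, on recalling that $\hat\pi(b,h)=\mathcal{F}\pi(b,h)\mathcal{F}^{-1}$ was established before the statement.

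The computations are essentially bookkeeping; the conceptual content is entirely in the two identities, and the crux is recognizing $Q(\xi)=-\tfrac12\xi_1\xi$ as the single device that simultaneously linearizes the metaplectic quadratic phase and intertwines the two linear actions. The only genuine subtlety I would flag is well-definedness on $L^2(\Theta_L)$: because $h_{1,1}>0$ one has $(h^T\xi)_1=h_{1,1}\xi_1\le0$ and $Q(\xi)_1=-\tfrac12\xi_1^2\le0$, so $h^T$, $\rho(h)^{-1}$ and $Q$ all preserve $\Theta_L$, and $Q$ restricts to a diffeomorphism of the full-measure open set $\{\xi_1<0\}$; this is what makes $\Psi$ a well-defined unitary and renders all the Jacobians above meaningful.
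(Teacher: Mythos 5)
Your argument is correct, and it follows the same overall strategy as the paper: verify directly that the substitution operator $\Psi$ built from $Q(\xi)=-\tfrac12\xi_1\xi$ intertwines $\mu\circ\phi$ with $\hat\pi$, with the two workhorse identities being $\langle\sigma_b\xi,\xi\rangle=-2\langle b,Q(\xi)\rangle$ (the paper's (\ref{eq:sigma Q})) and the fact that $Q$ conjugates $\xi\mapsto\rho(h)^{-1}\xi$ into $\xi\mapsto h^T\xi$ (equivalent to the paper's (\ref{eq:h Q})). The execution differs in three respects, all to your advantage in economy. First, you check $\Psi\mu(\phi(b,h))=\hat\pi(b,h)\Psi$ rather than computing $\Psi\mu(\phi(b,h))\Psi^{-1}$, which spares you the explicit formula for $\Psi^{-1}$ and for $Q^{-1}$. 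Second, you obtain the amplitude match by differentiating the conjugation identity $Q^{-1}(h^T\xi)=\rho(h)^{-1}Q^{-1}(\xi)$ and taking determinants, whereas the paper substitutes the explicit Jacobian formulas (\ref{eq:det J_Q}), (\ref{eq:det J_Q^-1}) and (\ref{eq:Q^-1}) (quoted without proof from an earlier work) and cancels powers of $2$, $|\xi_1|$ and $h_{1,1}$ by hand; your chain-rule argument makes it transparent that the amplitudes \emph{must} match once the arguments of $\hat f$ do, independently of the particular form of $Q$. Third, you derive both key identities by elementary expansion (using only $(h^T\xi)_1=h_{1,1}\xi_1$), whereas the paper routes the second one through the transformation law $h\sigma_b h^T=h_{1,1}\sigma_{hb}$ of Lemma~\ref{lem:rho}. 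The price of the abstract Jacobian argument is that one must know $Q$ restricts to a diffeomorphism of a full-measure subset of $\Theta_L$, which you correctly flag and justify; the paper's explicit computation has this built in. Both proofs are complete; yours is arguably the cleaner packaging of the same computation.
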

\begin{proof}
We start by giving a few identities without proof \cite{2014-differentfaces}:
\begin{eqnarray}
 && |\det J_{Q}(\xi)|=2^{1-d}|\xi_{1}|^{d},\label{eq:det J_Q}\\
 && |\det J_{Q^{-1}}(\xi)|=2^{\frac{d}{2}-1}|\xi_{1}|^{-\frac{d}{2}},\label{eq:det J_Q^-1}\\
 && \langle\sigma_{b}\xi,\xi\rangle=-2\langle b,Q(\xi)\rangle,\label{eq:sigma Q}\\
 && Q^{-1}(\xi)=\sqrt{2}\xi/\sqrt{-\xi_{1}}.\label{eq:Q^-1}
\end{eqnarray}
By (\ref{eq:sigma Q}) and (\ref{eq:rho}) there holds
\[
-2\langle b,Q(h^T\xi)\rangle=\langle\sigma_{b}\,h^T\xi,h^T\xi\rangle=\langle h\sigma_{b}\,h^T\xi,\xi\rangle=h_{1,1}\langle\sigma_{hb}\xi,\xi\rangle.
\]
Therefore, using again (\ref{eq:sigma Q}) we obtain
\[
-2\langle b,Q(h^T\xi)\rangle=-2h_{1,1}\langle hb,Q(\xi)\rangle=-2\langle b,h_{1,1}\,h^TQ(\xi)\rangle,
\]
whence
\begin{equation}
Q(h^T\xi)=h_{1,1}\,h^TQ(\xi).\label{eq:h Q}
\end{equation}
By using the definition of $\Psi$, (\ref{eq:meta}), (\ref{eq:sigma Q}), (\ref{eq:def rho}) and once again the definition of $\Psi$, we can now compute for $\hat{f}\in L^{2}(\Theta_{L})$, $b\in\Bbb{R}^{d}$ and $h\in H$ 
\begin{eqnarray}\label{eq:almost there 0}
&&\Psi\mu  (\phi(b,h))\Psi^{-1}\hat{f}(\xi)=|\det J_{Q^{-1}}(\xi)|^{1/2}(\mu(\phi(b,h))\Psi^{-1}\hat{f})(Q^{-1}(\xi))\nonumber\\
 && =|\det J_{Q^{-1}}(\xi)|^{1/2}|\det\rho(h)|^{-1/2}e^{\pi i\langle\sigma_{b}Q^{-1}(\xi),Q^{-1}(\xi)\rangle}\Psi^{-1}\hat{f}(\rho(h)^{-1}Q^{-1}(\xi))\nonumber\\
 && =|\det J_{Q^{-1}}(\xi)|^{1/2}|\det\rho(h)|^{-1/2}e^{-2\pi i\langle b,\xi\rangle}\Psi^{-1}\hat{f}(\rho(h)^{-1}Q^{-1}(\xi))\\
 && =|\det J_{Q^{-1}}(\xi)|^{1/2}h_{1,1}^{-\frac{d}{4}}|\det h|^{1/2}e^{-2\pi i\langle b,\xi\rangle}\Psi^{-1}\hat{f}(h_{1,1}^{-\frac{1}{2}}\,h^TQ^{-1}(\xi))\nonumber\\
 && =|\det J_{Q^{-1}}(\xi)|^{1/2}h_{1,1}^{-\frac{d}{4}}|\det h|^{1/2}e^{-2\pi i\langle b,\xi\rangle}\nonumber\\
&&\hspace{4.5cm}\cdot|\det J_{Q}(h_{1,1}^{-\frac{1}{2}}\,h^TQ^{-1}(\xi))|^{1/2}\hat{f}(Q(h_{1,1}^{-\frac{1}{2}}\,h^TQ^{-1}(\xi))).\nonumber
\end{eqnarray}
Now note that by (\ref{eq:h Q}) and by the fact that $Q$ is quadratic there holds
\begin{equation}
Q(h_{1,1}^{-\frac{1}{2}}\,h^TQ^{-1}(\xi))=h_{1,1}\,h^TQ(h_{1,1}^{-\frac{1}{2}}Q^{-1}(\xi))=\,h^TQ(Q^{-1}(\xi))=\,h^T\xi.\label{eq:almost there}
\end{equation}
Moreover we have
\begin{eqnarray}
|\det J_{Q}(h_{1,1}^{-\frac{1}{2}}\,h^TQ^{-1}(\xi))| & =&2^{1-d}|(h_{1,1}^{-\frac{1}{2}}\,h^TQ^{-1}(\xi))_{1}|^{d}\nonumber\\
 & =&2^{1-d}h_{1,1}^{-\frac{d}{2}}|(\,h^TQ^{-1}(\xi))_{1}|^{d}\nonumber\\
 & =&2^{1-d}h_{1,1}^{-\frac{d}{2}}h_{1,1}^{d}|Q^{-1}(\xi)_{1}|^{d}\nonumber\\
 & =&2^{1-d}h_{1,1}^{\frac{d}{2}}2^{\frac{d}{2}}|\xi_{1}|^{\frac{d}{2}},\nonumber
\end{eqnarray}
where the first equality follows from (\ref{eq:det J_Q}), the third one from the fact that $h^T$ is lower triangular and the forth one from (\ref{eq:Q^-1}). Therefore by (\ref{eq:det J_Q^-1})
\begin{equation}
|\det J_{Q^{-1}}(\xi)|^{1/2}|\det J_{Q}(h_{1,1}^{-\frac{1}{2}}\,h^TQ^{-1}(\xi))|^{1/2}=2^{\frac{d}{4}-\frac{1}{2}}|\xi_{1}|^{-\frac{d}{4}}2^{\frac{1}{2}-\frac{d}{2}}h_{1,1}^{\frac{d}{4}}2^{\frac{d}{4}}|\xi_{1}|^{\frac{d}{4}}=h_{1,1}^{\frac{d}{4}}.\label{eq:almost there 2}
\end{equation}

Finally, inserting (\ref{eq:almost there}) and (\ref{eq:almost there 2}) into (\ref{eq:almost there 0}) we obtain
\[
\Psi\mu(\phi(b,h))\Psi^{-1}\hat{f}(\xi)  =|\det h|^{1/2}e^{-2\pi i\langle b,\xi\rangle}\hat{f}(\,h^T\xi) =\hat{\pi}(b,h)\hat{f}(\xi),
\]
as desired.
\end{proof}

\noindent\textbf{Acknowledgements} G.~S.\ Alberti was partially supported by the ERC Advanced Grant Project
MULTIMOD-267184. S. Dahlke was supported by Deutsche \linebreak Forschungsgemeinschaft (DFG), Grant DA 360/19--1. He also acknowledges the support of the Hausdorff Research Institute for Mathematics during the Special Trimester ``Mathematics of Signal Processing''. F. De Mari and E.~De Vito were partially supported by Progetto PRIN 2010-2011 ``Variet\`a reali e complesse: geometria, topologia e analisi armonica''. They are members of the Gruppo Nazionale per l'Analisi Matematica, la Probabilit\`a e le loro Applicazioni (GNAMPA) of the Istituto Nazionale di Alta Matematica (INdAM). H. F\"uhr acknowledges support from DFG through the grant Fu 402/5-1. 

Part of the work on this paper was carried out during visits of S. Dahlke and H. F\"uhr to Genova, and they thank the Universit\`a di Genova for its hospitality.


\begin{thebibliography}{99.}%

\bibitem{2013-alberti-balletti-demari-devito} Alberti, G.S., Balletti, L., De Mari, F., De Vito, E.: Reproducing subgroups of {$Sp(2,\Bbb{R})$}. Part I: Algebraic classification. J. Fourier Anal. Appl. \textbf{19}(4), 651--682 (2013)

\bibitem{2014-alberti-demari-devito-manto} Alberti, G.S., De Mari, F., De Vito, E., Mantovani, L.: Reproducing subgroups of {$Sp(2,\Bbb{R})$} Part II: admissible vectors. Monatsh. Math. \textbf{173}(3), 261--307 (2014)



\bibitem{CaDo1} Cand\`{e}s, E.J., Donoho, D.L.: Ridgelets: A key to higher-dimensional intermittency? Phil. Trans. R. Soc. \textbf{357}(1760), 2495--2509 (1999)

\bibitem{CaDo2} Cand\`{e}s, E.J., Donoho, D.L.: New tight frames of curvelets and optimal representations of objects with piecewise $C^2$ singularities. Comm. Pure Appl. Math. \textbf{57}(2), 219--266 (2004)



\bibitem{CzaKi12} Czaja, W., King, E.J.: Isotropic shearlet analogs for {$L^2(\Bbb R^k)$} and localization operators. Numer. Funct. Anal. Optim. \textbf{33}(7-9), 872--905 (2012)

\bibitem{2014-differentfaces} Dahlke, S., De Mari, F., De Vito, E., H{\"a}user, S., Steidl, G., Teschke, G.: Different faces of the shearlet group. J. Geom. Anal. , 1--37 (2015), DOI:10.1007/s12220-015-9605-7

\bibitem{DaHaStTe} Dahlke, S., H{\"a}user, S., Steidl, G., Teschke, G.: Shearlet coorbit spaces: traces and embeddings in higher dimensions. Monatsh. Math. \textbf{169}(1), 15--32 (2013) 

\bibitem{DaHaTe12} Dahlke, S., H{\"a}user, S., Teschke, G.: Coorbit space theory for the Toeplitz shearlet transform. Int. J. Wavelets Multiresolut. Inf. Process. \textbf{10}(4), (2012)

\bibitem{DKMSST} Dahlke, S., Kutyniok, G., Maass, P., Sagiv, C., Stark, H.-G., Teschke, G.: The uncertainty principle associated with the continuous shearlet 
transform. Int. J. Wavelets Multiresolut. Inf. Process. \textbf{6}(2) , 157--181 (2008)

\bibitem{DaKuStTe} Dahlke, S., Kutyniok, G., Steidl, G., Teschke, G.: Shearlet coorbit spaces and associated banach frames. Appl. Comput. Harmon. Anal. \textbf{27}(2), 195--214 (2009)

\bibitem{DaStTe12} Dahlke, S., Steidl, G., Teschke, G.: Multivariate shearlet transform, shearlet coorbit spaces and their structural properties. In: G. Kutyniok, D. Labate (eds.) Shearlets, pp. 105--144. Birkh{\"a}user/Springer, New York (2012) 

\bibitem{DaStTe11} Dahlke, S., Steidl, G., Teschke, G.: Shearlet coorbit spaces: compactly supported analyzing shearlets, traces and embeddings. J. Fourier Anal. Appl. \textbf{17}(6), 1232--1255 (2011)

\bibitem{DaStTe10} Dahlke, S., Steidl, G., Teschke, G.: The continuous shearlet transform in arbitrary space dimensions. J. Fourier Anal. Appl. \textbf{16}(3), 340--364 (2010)

\bibitem{DaTe10} Dahlke, S., Teschke, G.: The continuous shearlet transform in higher dimensions: Variations of a theme. In: Group Theory: Classes, Representations and Connections, and Applications, pp. 165-175. Nova Science Publishers, (2010)

\bibitem{2013-demari-devito} De Mari, F., De Vito, E.: Admissible vectors for mock metaplectic representations. Appl. Comput. Harmon. Anal. \textbf{34}(2), 163--200 (2013)



\bibitem{DoVe} Do, M.N., Vetterli, M.: Contourlets: a directional multiresolution image representation. In: Proceedings of the International Conference on Image Processing, vol. 1, pp.357--360 (2002)

\bibitem{FeiGr0} Feichtinger, H.G., Gr{\"o}chenig, K.: A unified approach to atomic decompositions via integrable group representations. In: Proc. Conf. Lund 1986 `Function spaces and applications', Lect. Notes in Math. 1302 (1988), pp 52--73 

\bibitem{FeiGr1} Feichtinger, H.G., Gr{\"o}chenig, K.: Banach spaces related to integrable group representations and their atomic decompositions. Part I. J. Funct. Anal. \textbf{86}(2), 307--340 (1989)

\bibitem{FeiGr2} Feichtinger, H.G., Gr{\"o}chenig, K.: Banach spaces related to integrable group representations and their atomic decompositions. Part II. Monatsh. Math. \textbf{108}(2-3), 129--148 (1989)

\bibitem{FeiGr3} Feichtinger, H.G., Gr{\"o}chenig, K.: Non-orthogonal wavelet and Gabor expansions and group representations. In: Ruskai, M.B., et al. (eds.) Wavelets and Their Applications, pp.353-376. Jones and Bartlett, Boston (1992)

\bibitem{FeSu} Feichtinger, H.G., Sun, W., Zhou, X.: Two Banach spaces of atoms for stable wavelet frame expansions. J. Approx. Theory \textbf{146}(1), 28--70 (2007)

\bibitem{FeFuVo} Fell, J., F{\"u}hr, H., Voigtlaender, F.: Resolution of the wavefront set using general continuous wavelet transforms. J. Fourier Anal. Appl., 1--62 (2015)


\bibitem{Folland_AHA} Folland, G.B.: A course in abstract harmonic analysis. CRC Press, Boca Raton, FL(1995)

\bibitem{FrJa} Frazier, M., Jawerth, B.: Decomposition of Besov spaces. Indiana Univ. Math. J. \textbf{34}(4), 777--799 (1985)

\bibitem{FJW} Frazier, M., Jawerth, B., Weiss, G.: Littlewood-Paley theory and the study of function spaces. (1991)

\bibitem{Fu98} F{\"u}hr, H.: Continuous wavelet transforms with abelian dilation groups. J. Math. Phys. \textbf{39}(8), 3974--3986 (1998)

\bibitem{Fu_coorbit} F{\"u}hr, H.: Coorbit spaces and wavelet coefficient decay over general dilation groups. Trans. Amer. Math. Soc. \textbf{367}(10), 7373--7401 (2015)

\bibitem{Fu10} F{\"u}hr, H.: Generalized {C}alder\'on conditions and regular orbit spaces. Colloq. Math. \textbf{120}(1), 103--126 (2010) 

\bibitem{Fu_atom} F{\"u}hr, H.: Vanishing moment conditions for wavelet atoms in higher dimensions. Adv. Comput. Math. \textbf{42}(1), 127-153 (2016)



\bibitem{Fu_RT} F{\"u}hr, H., Raisi-Tousi, R.: Simplified vanishing moment criteria for wavelets over general dilation groups, with applications to abelian and shearlet dilation groups. To appear in Appl. Comp. Harm. Anal. DOI:10.1016/j.acha.2016.03.003

Preprint available under http://arxiv.org/abs/1407.0824 (2014)

\bibitem{Gr} Gr{\"o}chenig, K.: Describing functions: atomic decompositions versus frames. Monatsh. Math. \textbf{112}(1),1--42 (1991)

\bibitem{grohs} Grohs, P.: Shearlet and microlocal analysis. In: Kutyniok, G., Labate, D. (eds.) Shearlets: Multiscale Analysis for Multivariate Data, pp. 39-67. Birkh\"auser, Boston (2012) 



\bibitem{HiNe} Hilgert, J., Neeb, K.-H.: Structure and geometry of Lie groups.
Springer Monographs in Mathematics. Springer, New York (2012) 

\bibitem{JaffardMeyer} Jaffard, S., Meyer, Y.: Wavelet Methods for Pointwise Regularity and Local Oscillations of Functions, Mem. Amer. Math. Soc. \textbf{123}(587), (1996)



\bibitem{emily} King, E.J.: Wavelet and frame theory: frame bound gaps, generalized shearlets, Grassmannian fusion frames, and p-adic wavelets. Dissertation, University of Maryland, College Park (2009)


\bibitem{KuLa} Kutyniok, G., Labate, D.: Resolution of the wavefront set using continuous shearlets. Trans. Amer. Math. Soc. \textbf{361}(5), 2719--2754 (2009)

\bibitem{shearlet_book} Kutyniok, G., Labate, D. (eds.): Shearlets: Multiscale Analysis for Multivariate Data. Birkh\"auser/Springer, New York (2012)

\bibitem{Tr88} Triebel, H.: Characterizations of {B}esov-{H}ardy-{S}obolev spaces: a unified approach. J. Approx. Theory \textbf{52}(2), 162--203 (1988)

\bibitem{raja} Varadarajan, V. S.: Lie groups, Lie algebras, and their representations. Springer-Verlag, New York (1984) 

\end{thebibliography}
\end{document}